\documentclass[11pt]{amsart}

\usepackage[T1]{fontenc}
\usepackage{lmodern}
\usepackage[a4paper,margin=29mm]{geometry}
\usepackage{microtype}
\usepackage{amsmath,amssymb,mathtools}
\usepackage{enumitem}
\usepackage{xcolor}
\usepackage{tikz}
\usetikzlibrary{arrows.meta}
\usepackage[colorlinks=true,linkcolor=blue!55!black,citecolor=blue!55!black,urlcolor=blue!65!black]{hyperref}

\newtheorem{theorem}{Theorem}[section]
\newtheorem{proposition}[theorem]{Proposition}
\newtheorem{lemma}[theorem]{Lemma}

\theoremstyle{definition}

\theoremstyle{remark}

\numberwithin{equation}{section}
\setlist[enumerate]{leftmargin=*,label=(\roman*)}

\newcommand{\R}{\mathbb R}
\newcommand{\dd}{\,\mathrm d}
\newcommand{\ip}[1]{\langle #1\rangle}
\newcommand{\FL}{\mathcal F L}
\newcommand{\B}{\mathcal B}
\newcommand{\supp}{\operatorname{supp}}
\newcommand{\res}{\mathcal R}

\title[Sharp Barron regularity for Coulombic wave functions]{Sharp Barron Regularity Results for Coulombic Many-Electron Wave Functions}
\hypersetup{
  pdftitle={Sharp Barron Regularity Results for Coulombic Many-Electron Wave Functions}, pdfauthor={Pingbing Ming; Hao Yu}, pdfsubject={Sharp spectral Barron regularity for Coulombic many-electron wave functions}, pdfkeywords={Coulomb Hamiltonian, electronic wave function, Jastrow factor, spectral Barron space, Fourier--Lebesgue space} }

\author[P. Ming]{Pingbing Ming}
\author[H. Yu]{Hao Yu}
\address{SKLMS, Institute of Computational Mathematics and Scientific/Engineering Computing, Academy of Mathematics and Systems Science, Chinese Academy of Sciences,
Beijing 100190, China; School of Mathematical Sciences, University of Chinese Academy of Sciences, Beijing 100049, China}
\email[Pingbing Ming]{mpb@lsec.cc.ac.cn}
\email[Hao Yu]{yuhao@amss.ac.cn}

\subjclass[2020]{Primary 35B65, 35J10; Secondary 35Q40, 42B35, 46E35}
\keywords{Coulomb Hamiltonian, electronic wave function, Jastrow factor, spectral Barron space, Fourier--Lebesgue space, endpoint regularity}

\begin{document}

\begin{abstract}
  We establish sharp Barron regularity for Coulombic many-electron wave functions after extraction of the universal cut-off Jastrow factors. Following the factorization of Fournais et al.~\cite[Definition~1.4]{FournaisEtAl2005}, for a Coulombic eigenfunction $\psi$ we define the successive quotients by $\phi=e^{-F_{2,\mathrm{cut}}}\psi$ and $\phi_3=e^{-F_{3,\mathrm{cut}}}\phi =e^{-(F_{2,\mathrm{cut}}+F_{3,\mathrm{cut}})}\psi.$
  Then
  \[ \phi,\phi_3\in\B^s(\R^{3N}) \qquad\text{for every }s<2. \]
  This range is optimal among universal factorizations. No factor depending only on the particle number and the nuclear data, but not on the eigenfunction or its eigenvalue, can make every corresponding quotient belong to $\B^2$.

  We also determine the exact endpoint growth. Writing $\varepsilon=2-s$, we prove that, for either $u=\phi$ or $u=\phi_3$, there is a computable constant $M$ independent of $\varepsilon$ such that
  \[ 
  \|u\|_{\B^{2-\varepsilon}} \leq \frac{M}{\varepsilon^{2}} \|u\|_{\B^1}. 
  \]
  For the unperturbed two-electron atom we prove, with a constant independent of $\varepsilon$,
  \[ \left| \|\phi_3\|_{\B^{2-\varepsilon}} - \frac{32\pi Z|\phi_3(0,0)|}{\varepsilon^{2}} \right| \leq \frac{C}{\varepsilon}. \]
  Hence the quadratic rate in the upper bound is sharp whenever \(\lvert\phi_3(0,0)\rvert\neq0\), as is the case for the ground state.
\end{abstract}

\maketitle

\section{Introduction and main results}\label{sec:introduction}

We study the electronic Schr\"odinger equation for integers $N,L\geq1$, where $N$ is the number of electrons and $L$ is the number of fixed nuclei. Let $x=(x_1,\ldots,x_N)\in\R^{3N}$, and let $R_1,\ldots,R_L\in\R^3$ be distinct nuclear positions with charges $Z_1,\ldots,Z_L>0$. We use the convention in which the kinetic energy is $-\sum_{i=1}^N\Delta_{x_i}$. The electronic Hamiltonian is
\begin{equation}\label{eq:H} H=-\sum_{i=1}^N\Delta_{x_i}+V, \qquad V=-\sum_{i=1}^N\sum_{\nu=1}^L\frac{Z_\nu}{|x_i-R_\nu|} +\sum_{1\leq i<j\leq N}\frac1{|x_i-x_j|}. \end{equation}
The internuclear repulsion is constant in the fixed-nuclei approximation and is absorbed into the eigenvalue.  The standard self-adjoint realization of $H$ on $L^2(\R^{3N})$ has domain $H^2(\R^{3N})$.  We consider nonzero eigenfunctions satisfying
\[ H\psi=E\psi, \qquad E\in\R, \qquad 0\ne\psi\in H^2(\R^{3N}). \]
The Hamiltonian \eqref{eq:H} is the standard nonrelativistic fixed-nuclei model for atoms and molecules \cite{FournaisEtAl2005,Yserentant2010}.  Its eigenfunctions depend on $3N$ spatial variables, and their regularity directly affects the complexity of their high-dimensional approximation \cite{Yserentant2010}.

Away from the electron--nucleus and electron--electron collision sets, the Coulomb potential is real analytic, and elliptic analytic regularity gives \cite{FournaisEtAl2009}
\[ \psi\in C^\omega\!\left( \R^{3N}\setminus \left[ \bigcup_{i=1}^N\bigcup_{\nu=1}^L\{x_i=R_\nu\} \bigcup \bigcup_{1\leq i<j\leq N}\{x_i=x_j\} \right] \right). \]
Kato~\cite{Kato1957} proved that Coulombic eigenfunctions are locally Lipschitz and derived spherical-average cusp conditions at simple two-particle coalescences. Fournais et al.~\cite{FournaisEtAl2005} subsequently constructed explicit two- and three-particle factors $F_2$ and $F_3$, independent of $E$ and of the particular eigenfunction, such that
\[ e^{-(F_2+F_3)}\psi\in C_{\mathrm{loc}}^{1,1}(\R^{3N}). \]
The function $F_2$ collects the pairwise Coulomb cusp profiles, whereas $F_3$ is an explicit logarithmic correction involving two electrons and one nucleus. They also proved that $C_{\mathrm{loc}}^{1,1}$ is optimal among factorizations by a universal factor depending only on the particle number and the nuclear data.
Taken together, these results show that extracting the universal Jastrow factors raises the local H\"older regularity from $C_{\mathrm{loc}}^{0,1}$ to $C_{\mathrm{loc}}^{1,1}$, a gain of one full order. The present paper establishes the analogous gain in the global spectral Barron scale. After extracting the cut-off Jastrow factors, the regularity range improves from $s<1$ to $s<2$, and we determine the precise norm growth as $s\uparrow2$.

For $\varphi\in\mathcal S(\R^d)$, we use the unitary Fourier transform
\[ \widehat\varphi(\xi)=(2\pi)^{-d/2} \int_{\R^d}e^{-ix\cdot\xi}\varphi(x)\dd x \]
and extend it to $\mathcal S'(\R^d)$ by duality.  For $s\in\R$, the spectral Barron space is
\[ \B^s(\R^d) :=\left\{ u\in\mathcal S'(\R^d): \ip{\cdot}^{s}\widehat u\in L^1(\R^d) \right\}, \]
equipped with the norm
\[ \|u\|_{\B^s(\R^d)} :=\int_{\R^d}\ip{\xi}^{s}|\widehat u(\xi)|\dd\xi, \qquad \ip{\xi}=(1+|\xi|^2)^{1/2}. \]
For the Coulombic wave function, Yserentant \cite[Theorem~4.7]{Yserentant2026} proved that every eigenfunction with eigenvalue below the essential spectrum belongs to $\B^s(\R^{3N})$ for $0\leq s<1$, with norm bounded by $\mathcal{O}(1-s)^{-1}$.  The hydrogenic ground state does not belong to $\B^1(\R^3)$ and has precisely this simple-pole growth.  Our earlier work~\cite[Theorem~2.4, Corollary~2.7, and Example~2.8]{MingYu2025} treats a broader class of one- and two-particle potentials under low-rank Fourier--Lebesgue assumptions.  In the three-dimensional Coulomb case it recovers the same range $s<1$ and the same $\mathcal{O}((1-s)^{-1})$ bound, while the radial example shows that this range cannot in general be extended to $s=1$.
\subsection{Cut-off Jastrow factors}

We first define a cut-off version of the universal pairwise cusp factor and then add the universal three-particle logarithmic correction.

Fix $\chi\in C_c^\infty([0,\infty))$ such that $0\leq\chi\leq1$, $\chi=1$ on $[0,1]$, and $\chi=0$ on $[2,\infty)$.  Define
\begin{align*}
  F_2(x)
   & =-\frac12\sum_{i=1}^N\sum_{\nu=1}^L Z_\nu|x_i-R_\nu|
   +\frac14\sum_{1\leq i<j\leq N}|x_i-x_j|,                                      \\
  F_{2,\mathrm{cut}}(x)
   & =-\frac12\sum_{i=1}^N\sum_{\nu=1}^L Z_\nu\chi(|x_i-R_\nu|)|x_i-R_\nu|                       \\
   & \quad+\frac14\sum_{1\leq i<j\leq N}\chi(|x_i-x_j|)|x_i-x_j|.
\end{align*}
Then $\Delta F_2=V$ in the sense of distribution. This identity is the basic cancellation mechanism.

For $(x,y)\in\R^6$, set
\begin{equation}\label{eq:q} q(x,y)=\chi(|x|)\chi(|y|)(x\cdot y) \log(|x|^2+|y|^2), \end{equation}
where the value at $(0,0)$ is defined by continuity. The explicit cut-off three-particle factor is
\begin{equation*} F_{3,\mathrm{cut}}(x) =\frac{2-\pi}{12\pi} \sum_{\nu=1}^L\sum_{1\leq i<j\leq N}Z_\nu q(x_i-R_\nu,x_j-R_\nu). \end{equation*}
It removes the universal isotropic three-particle logarithm. These are precisely the cut-off factors introduced in~\cite{FournaisEtAl2005}.

For the eigenfunction $\psi$ fixed above, define the two quotient functions successively by
\begin{equation}\label{eq:quotients} \phi=e^{-F_{2,\mathrm{cut}}}\psi, \qquad \phi_3=e^{-F_{3,\mathrm{cut}}}\phi. \end{equation}
A combination of the two definitions in~\eqref{eq:quotients} gives $\phi_3=e^{-(F_{2,\mathrm{cut}}+F_{3,\mathrm{cut}})}\psi$.
Thus $\phi$ removes only the universal two-particle cusp factor, whereas $\phi_3$ removes, in addition, the explicit cut-off three-particle logarithmic factor.

\subsection{Main results}

The following theorem states the global regularity range, the norm growth as $s\uparrow2$, and the sharpness of both conclusions.

\begin{theorem}[Main results]\label{thm:main}
  Let $\psi\in H^2(\R^{3N})\setminus\{0\}$ satisfy $H\psi=E\psi$, and let $\phi$ and $\phi_3$ be defined by \eqref{eq:quotients}.
  \begin{enumerate}
    \item For every $s<2$,
          \begin{equation}\label{eq:regularity-range} \phi,\phi_3\in\B^s(\R^{3N}). \end{equation}
    \item For either $u=\phi$ or $u=\phi_3$, there is a computable constant $M\geq0$, depending only on the choice of $u$, $N$, $E$, the nuclear charges $\{Z_\nu\}$, the relative nuclear positions $\{R_\mu-R_\nu\}$, and the fixed cut-off profile $\chi$, such that
          \begin{equation}\label{eq:upper} \|u\|_{\B^{2-\varepsilon}(\R^{3N})} \leq \frac{M}{\varepsilon^2} \|u\|_{\B^1(\R^{3N})}, \qquad 0<\varepsilon\leq\frac12. \end{equation}
          Moreover, $\|u\|_{\B^1(\R^{3N})}\leq C\|u\|_{H^1(\R^{3N})}$ with $C$ independent of $\varepsilon$.
    \item The range $s<2$ in \eqref{eq:regularity-range} cannot be extended to $s=2$ by any universal factorization\footnote{This is the class fixed in the setup of Fournais et al.~\cite{FournaisEtAl2005}. An admissible universal factor is a finite, strictly positive, locally Lipschitz function $G_{N,R,Z}$; the same factor is used for every eigenfunction and eigenvalue of the corresponding Coulomb Hamiltonian, and the factorization is understood almost everywhere.} of the following form. There is no multiplicative factor $G$, depending only on the particle number and the nuclear positions and charges, but not on $\psi$ or $E$, such that every Coulombic eigenfunction can be written as
          \[ \psi=Gu, \qquad u\in\B^2(\R^{3N}). \]
    \item Let $N=2$, $L=1$, $R_1=0$, and $Z_1=Z>0$. Then \eqref{eq:H} becomes the unperturbed two-electron atomic Hamiltonian
          \[ H_Z=-\Delta_x-\Delta_y-\frac Z{|x|}-\frac Z{|y|}+\frac1{|x-y|} \quad\text{on }\R^6. \]
          If $H_Z\psi=E\psi$ with $E<\inf\sigma_{\mathrm{ess}}(H_Z)$, and if $\phi_3$ is the corresponding quotient in \eqref{eq:quotients}, then there is $C_{\rm sharp}<\infty$ such that
          \begin{equation}\label{eq:sharp-main} \left|\|\phi_3\|_{\B^{2-\varepsilon}(\R^6)} -\frac{32\pi Z|\phi_3(0,0)|}{\varepsilon^2}\right| \leq\frac{C_{\rm sharp}}{\varepsilon}, \end{equation}
          for $0<\varepsilon\leq1/2$.
          Consequently, the power two in \eqref{eq:upper} is sharp whenever $\phi_3(0,0)\ne0$.  For $Z>1$ this applies, in particular, to the scalar two-electron ground state.
  \end{enumerate}
\end{theorem}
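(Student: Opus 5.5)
The plan is to work entirely with the equation satisfied by the quotient and to use a bootstrap in the Fourier--Lebesgue scale. Write $u=\phi$ (the case $u=\phi_3$ is analogous). Inserting $\psi=e^{F_{2,\mathrm{cut}}}\phi$ into $H\psi=E\psi$ gives
\[
-\Delta\phi-2\nabla F_{2,\mathrm{cut}}\cdot\nabla\phi+\bigl(V-\Delta F_{2,\mathrm{cut}}-|\nabla F_{2,\mathrm{cut}}|^2-E\bigr)\phi=0 .
\]
Because $\Delta F_2=V$, the potential $V-\Delta F_{2,\mathrm{cut}}$ is bounded, compactly supported near the collision sets and smooth elsewhere. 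Its singular part is a sum of pair terms $\Delta(1-\chi(|z|))|z|$ evaluated at $z=x_i-R_\nu$ or $z=x_i-x_j$; these are smooth, so there is no singular part at all. The remaining rough coefficient is $\nabla F_{2,\mathrm{cut}}$, a sum of cut-off pair profiles $\chi(|z|)z/|z|$. In the Fourier variable dual to $z$, such a profile decays like $|\eta|^{-3}$ and is therefore in $\B^{t}(\R^3)$ exactly for $t<0$ (and only weakly at $t=0$). Hence I would invert $1-\Delta$ and write
\[
\phi=(1-\Delta)^{-1}\bigl[2\nabla F_{2,\mathrm{cut}}\cdot\nabla\phi+W\phi\bigr],
\]
where $W$ is bounded and has low-rank Fourier structure. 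Each pair term acts on only one or two particle coordinates, so the convolution $\widehat{a(x_i-x_j)}*\widehat g$ in $\R^{3N}$ reduces to a three-dimensional convolution in the relative direction.

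The core estimate, which I would borrow from our earlier low-rank framework \cite{MingYu2025}, is a product bound of the form
\[
\|a(x_i-x_j)\,g\|_{\B^{s-2}}\leq C\,\|\widehat a\,\ip{\cdot}^{s-2}\|_{L^1+\text{weak}}\,\|g\|_{\B^{s-1}} .
\]
It is valid as long as $s-2<0$, with a constant that blows up like $(2-s)^{-1}$ because $\int^{R}\ip{\eta}^{s-2}|\eta|^{-3}\,\mathrm d\eta$ over the $3$-dimensional relative frequency behaves like $\int^{R}\rho^{s-3}\,\mathrm d\rho\sim\varepsilon^{-1}$. Taking $g=\nabla\phi$ and starting from $\phi\in\B^1$, which follows from $\phi\in H^2$ with weights by Cauchy--Schwarz, as in the stated bound $\|u\|_{\B^1}\le C\|u\|_{H^1}$ using decay of eigenfunctions below threshold, I would iterate in steps of size less than one in $s$. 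Interpolating, together with the same $\varepsilon^{-1}$ factor appearing once more when $\nabla\phi$ is controlled in $\B^{1-\varepsilon}$, produces $M\varepsilon^{-2}$; this gives (i) and (ii). For $\phi_3$ the extra drift $\nabla F_{3,\mathrm{cut}}$ and the term $\Delta F_{3,\mathrm{cut}}$ are milder in the six-dimensional joint frequency of the two electrons. The potential issue is that the $\log$ in $q$ produces a $|\zeta|^{-6}$ Fourier tail with a logarithm in $\R^6$. I expect this to be absorbed by the same counting.

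For (iii) I would argue by contradiction using two eigenfunctions. If $\psi=Gu$ with $u\in\B^2\subset C^{1}$ and bounded gradient, then at a nucleus the hydrogen-type states with nonvanishing value and with prescribed cusp force $G$ to carry the exact cusp $1-Z|x-R|$ up to a $C^1$ factor. The ratio of two eigenfunctions is then $C^1$, which contradicts the Fournais et al.\ optimality example: there, the $|x|^2\log|x|$-type three-body term has a second-order obstruction whose coefficient depends on $E$. More directly, $\B^2$ means $\nabla^2u$ is continuous. Taking two eigenvalues, the ratio $\psi_1/\psi_2$ has a logarithmic Hessian singularity with an $E$-dependent coefficient, and this term is not in $C^2$.

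For (iv), the main obstacle, I would compute the leading Fourier asymptotics of $\phi_3$. The drift term $-2\nabla F_{2,\mathrm{cut}}\cdot\nabla\phi_3$ near the origin gives $\widehat{\phi_3}(\xi)\approx \phi_3(0,0)\,\widehat{Z(|x|+|y|)\chi}$, i.e.\ $8\pi Z\phi_3(0,0)\,|\xi_x|^{-4}\delta$-type contributions concentrated on the $3$-planes $\xi_y=0$ and $\xi_x=0$, smeared by the smooth $y$-dependence. I would isolate the leading term $T=\phi_3(0,0)Z\chi(|x|)|x|\,\chi(|y|)\cdot(\text{const})+(x\leftrightarrow y)$ and compute $\|T\|_{\B^{2-\varepsilon}}$ exactly, obtaining $32\pi Z|\phi_3(0,0)|\varepsilon^{-2}+O(\varepsilon^{-1})$ from $\int\ip{\xi}^{2-\varepsilon}|\xi_x|^{-4}\ip{\xi_y}^{-k}$. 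Then I would show the remainder $\phi_3-T$ lies in $\B^{2-\varepsilon}$ with norm $O(\varepsilon^{-1})$, by rerunning the bootstrap on the equation for $\phi_3-T$: its forcing is one log better, so only a single pole survives. Pinning down the constant $32\pi$ and the single-pole remainder is the delicate part. Finally, $\phi_3(0,0)\neq0$ for the ground state follows from positivity of the ground state when $Z>1$.
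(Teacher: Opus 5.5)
\textbf{Verdict.} Your proposal has genuine gaps in parts (i)--(ii), and your approach to part (iv) is wrong.

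\textbf{Parts (i)--(ii).} The starting point $\phi\in\B^1$ is not available the way you describe. Cauchy--Schwarz gives $\int\ip\xi|\widehat u|\dd\xi\le\|\ip{\cdot}^{1-t}\|_{L^2(\R^{3N})}\|u\|_{H^t}$ only for $t>1+3N/2$, far beyond $H^2$. Decay of $u$ in physical space controls the smoothness of $\widehat u$, not its integrability at high frequency; moreover, part (i) does not assume $E$ is below threshold. Getting from $H^1=\FL^2_1$ to $\B^1=\FL^1_1$ is the real content of part (i), and you skip it. The paper runs the conjugated equation in $\FL^p_1$, where:
\begin{enumerate}
\item the low-rank multiplier lemma (active transform in $L^1+L^{\alpha'}$) puts the right-hand side in $\FL^p_{-2\beta}$;
\item the resolvent returns $u\in\FL^p_{2-2\beta}$;
\item the surplus $1-2\beta$ over $\FL^p_1$ is spent on H\"older's inequality to lower $1/p$ by less than $(1-2\beta)/(3N)$ per step, reaching $p=1$ after finitely many steps.
\end{enumerate}
The bound $\|u\|_{\B^1}\le C\|u\|_{H^1}$ is likewise derived from the equation. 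It uses a high-frequency contraction at fixed loss $1/2$, valid in both $\FL^1$ and $\FL^2$, and applies Cauchy--Schwarz only on a bounded frequency ball.

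Your product bound $\B^{s-1}\to\B^{s-2}$, followed by the resolvent, returns $\phi\in\B^s$ from $\phi\in\B^s$, so it gains nothing. The useful estimate is $\FL^1_0\to\FL^1_{-\varepsilon}$ for one angular block, with constant $O(\varepsilon^{-1})$, applied once from $\B^1$. No iteration is needed, and no second pole comes from controlling $\nabla\phi$ in $\B^{1-\varepsilon}$.

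The second pole actually sits in the coefficient you absorbed into a ``bounded $W$''. The cross terms of $|\nabla F_{2,\mathrm{cut}}|^2$, such as $b(x_i-R_\nu)\cdot b(x_i-x_j)$, are products of two angular profiles in independent collision variables. Their active six-dimensional transform is $\widehat b(\xi)\otimes\widehat b(\eta)=O(\ip\xi^{-3}\ip\eta^{-3})$, and each factor costs $\varepsilon^{-1}$. In the Barron scale, boundedness of a coefficient is irrelevant; only its Fourier structure matters. For the same reason, $V-\Delta F_{2,\mathrm{cut}}$ needs an argument that $\widehat{d_J}\in L^1$. It is not compactly supported: it has a $2/|z|$ tail.

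\textbf{Part (iv).} Your leading term $T\propto\phi_3(0,0)\chi(|x|)|x|\chi(|y|)$ is a Coulomb cusp. It lies in no $\B^{2-\varepsilon}$ with $\varepsilon<1$, since $\int\ip\xi^{2-\varepsilon}|\xi_x|^{-4}\dd\xi_x$ diverges. Also $\phi_3$ contains no such cusp: $e^{-F_{2,\mathrm{cut}}}$ removed it, which is exactly why part (i) holds. More fundamentally, a term depending on a single collision coordinate times a smooth factor has only one radial frequency scale and cannot produce $\varepsilon^{-2}$.

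The double pole comes from the zeroth-order coefficient $-\frac Z4\Gamma_0$ in the local conjugated equation. Here $\Gamma=(x/|x|-y/|y|)\cdot(x-y)/|x-y|$ is the cross term of $|\nabla F|^2$, and the $Q$-part comes from $\Delta F_{3,\mathrm{cut}}$. The argument then runs as follows.
\begin{enumerate}
\item Freeze $u$ at $(0,0)$. The error is $O(\varepsilon^{-1})$ because one frequency derivative of $\widehat{\vartheta\Gamma_0}$ has weighted $L^1$ norm $O(\varepsilon^{-1})$.
\item On the nested cones $1\ll|\eta|\ll|\xi|$ and $1\ll|\xi|\ll|\eta|$, the localized transform equals $\frac8\pi(\omega\cdot\nu)|\xi|^{-3}|\eta|^{-3}$ up to errors of weighted size $O(\varepsilon^{-1})$.
\item Two nested logarithmic radial integrals give $128\pi\varepsilon^{-2}$, and multiplying by $Z/4$ gives $32\pi Z$.
\item Every other piece must be shown to carry at most one critical scale: the drift, $S_{\rm loc}$, the complement of the cones, and the exterior part (via a partition separating the two angular blocks).
\end{enumerate}

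\textbf{Part (iii).} Your ``more direct'' route, via $\B^2\hookrightarrow C^2$ and the Fournais et al.\ optimality, is essentially the paper's argument.

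\textbf{Ground-state claim in (iv).} You also need existence of the ground state for $Z>1$ (Zhislin). You also need that both cut-off Jastrow factors vanish at $(0,0)$, so that $\phi_3(0,0)=\psi(0,0)>0$.
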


The four parts of Theorem~\ref{thm:main} constitute the main contributions of this paper.  Part~(i) shows that removing the universal pairwise cusp factor raises the global spectral Barron range from $s<1$ for $\psi$ to $s<2$ for both $\phi$ and $\phi_3$. Part~(ii) proves that the spectral Barron norm grows at most quadratically as $s\uparrow2$.

Parts~(iii) and~(iv) establish the sharpness of these two estimates. Part~(iii) shows that the regularity range in Part~(i) cannot be improved within the entire class of universal state-independent multiplicative factorizations specified there.  Part~(iv) proves for an unperturbed two-electron bound state that
\[
  \|\phi_3\|_{\B^{2-\varepsilon}(\R^6)}
  =32\pi Z|\phi_3(0,0)|\varepsilon^{-2}+\mathcal{O}(\varepsilon^{-1}).
\]
Thus the exponent two in Part~(ii) cannot be reduced whenever $\phi_3(0,0)\ne0$. This includes the positive scalar ground states of helium $(Z=2)$ and the helium-like ions $\mathrm{Li}^{+}$ $(Z=3)$ and $\mathrm{Be}^{2+}$ $(Z=4)$.

Only recently did we become aware of the independent work of Ehrlacher \cite[Theorem~4.1]{Ehrlacher2026}, who proved the same range $s<2$ for bound-state quotients associated with a prescribed cut-off Jastrow factor. For the hydrogen-like example $\psi(x)=x_1e^{-Z|x|/4}$, Ehrlacher \cite[Theorem~3.3 and Lemma~3.2]{Ehrlacher2026} showed that the associated quotient does not belong to $\B^2$ and that its $\B^{2-\varepsilon}$ norm grows like $\varepsilon^{-1}$ as $\varepsilon\downarrow0$. Thus Ehrlacher proves endpoint sharpness for one prescribed factor. Our results go further in both scope and precision. Part~(iii) rules out endpoint improvement for the entire stated class of universal state-independent factors. Part~(ii) establishes the quadratic upper bound for the general quotients, while Part~(iv) proves that this rate is sharp by deriving the exact $\varepsilon^{-2}$ asymptotic for physical two-electron eigenfunctions and showing that it is attained by the ground states of helium and helium-like ions.

The remainder of the paper is organized as follows. We establish the Fourier--Lebesgue bootstrap and verify the required Coulombic coefficient estimates in~\S~\ref{sec:bootstrap} and \S~\ref{sec:coefficients}, respectively. \S~\ref{sec:quantitative} proves the quantitative endpoint upper bound. We clarify the universal-factor obstruction in \S~\ref{sec:one-electron}, and \S~\ref{sec:sharpness} derives the exact two-electron asymptotic expansion.
\section{The Fourier--Lebesgue bootstrap}\label{sec:bootstrap}
\subsection{Fourier spaces}

For $t\in\R$ and $1\leq p\leq\infty$, let
\[ \FL_t^p(\R^d) =\left\{ u\in\mathcal S'(\R^d): \ip{\cdot}^{t}\widehat u\in L^p(\R^d) \right\}. \]
We equip this space with the norm
\[ \|u\|_{\FL_t^p(\R^d)} :=\|\ip{\cdot}^{t}\widehat u\|_{L^p(\R^d)}. \]
Thus $H^t(\R^d)=\FL_t^2(\R^d)$ and $\B^t(\R^d)=\FL_t^1(\R^d)$ isometrically.  We omit the ambient Euclidean space from a norm only when it is unambiguous.  The argument below only uses $1\leq p\leq2$.
Throughout the rest of the paper,
\[ \res=(-\Delta+1)^{-1}. \]

The purpose of the first lemma is to make the collision geometry explicit. The possible non-Wiener part of a Coulomb coefficient on $\R^d$ is not a genuinely $d$-dimensional multiplier. It has the form
\begin{equation}\label{eq:structured-block-expanded} a(\mathsf Mx-c), \qquad \mathsf M:\R^d\longrightarrow\R^m\text{ surjective}, \end{equation}
where $m=3$ for one collision coordinate and $m\leq6$ for two collision coordinates.  The remaining $d-m$ variables are spectators.

\begin{lemma}[Low-rank multiplier estimate]  \label{lem:multiplier} Let $1\leq p\leq2$, $\beta>0$, and suppose that $a\in L^\infty(\R^m)$ has a Fourier decomposition $\widehat a=g+h$ with $g\in L^1(\R^m)$ and $h\in L^{\alpha'}(\R^m)$ for some
  \begin{equation}\label{eq:alpha-conditions-expanded} \alpha>\max\{p,m/2\},\qquad 2\alpha\beta>m,\qquad \alpha'=\frac{\alpha}{\alpha-1}. \end{equation}
  Then, for every $u\in\FL_0^p(\R^d)$,
  \begin{equation}\label{eq:multiplier-norm-expanded}
    \|a(\mathsf M\cdot-c)u\|_{\FL_{-2\beta}^p}
    \leq C_{\mathsf M}(2\pi)^{-m/2}
    \left(
    \|g\|_{L^1(\R^m)}
    +\|\ip{\cdot}^{-2\beta}\|_{L^\alpha(\R^m)}
    \|h\|_{L^{\alpha'}(\R^m)}
    \right)
    \|u\|_{\FL_0^p}.
  \end{equation}
  Here $C_{\mathsf M}$ depends only on the fixed linear change of variables.  The bound is independent of the translation $c$.
\end{lemma}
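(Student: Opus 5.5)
First I reduce to the case $\mathsf M$ a coordinate projection. Since $\mathsf M:\R^d\to\R^m$ is surjective, I choose an invertible linear map $T$ on $\R^d$ with $\mathsf M T=\pi_m$, where $\pi_m(y)=y'$ denotes the first $m$ coordinates of $y=(y',y'')$. I also translate by a preimage $c_0$ of $c$, so that $a(\mathsf Mx-c)=a(\pi_m(T^{-1}(x-c_0)))$. Translation multiplies $\widehat u$ by a unimodular phase, so it leaves every $\FL^p_t$ norm unchanged; this gives the independence of $c$. Composition with $T$ changes $\widehat u$ by $\widehat{u\circ T}(\xi)=|\det T|^{-1}\widehat u(T^{-t}\xi)$. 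Because $\ip{T^{-t}\xi}$ and $\ip{\xi}$ are comparable with constants depending on $\|T\|,\|T^{-1}\|$, the $\FL^p_{-2\beta}$ and $\FL^p_0$ norms change by factors depending only on $T$ (note $\beta$ and $p$ range over fixed sets, and one can absorb these into $C_{\mathsf M}$). So it suffices to treat $a(x')$ with $x=(x',x'')$.

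In that case, $\widehat{a u}(\xi)=(2\pi)^{-m/2}\int_{\R^m}\widehat a(\eta)\,\widehat u(\xi'-\eta,\xi'')\dd\eta$. Here $\widehat a$ is a distribution on $\R^m$ acting in the $\xi'$ variable only, and the tensor structure with $\delta$ in $\xi''$ is what produces the factor $(2\pi)^{-m/2}$ rather than $(2\pi)^{-d/2}$. I justify this formula for Schwartz $u$ and then use density, which requires $p<\infty$, together with the a priori bound. Splitting $\widehat a=g+h$, the $g$ part is handled by Minkowski's integral inequality in $L^p(\R^d)$, together with $\ip{\xi}^{-2\beta}\le1$ and the translation invariance of the $L^p$ norm. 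This gives the contribution $(2\pi)^{-m/2}\|g\|_{L^1}\|u\|_{\FL^p_0}$.

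The $h$ part is the main step. I use $\ip{\xi}^{-2\beta}\le\ip{\xi'}^{-2\beta}$ and fix $\xi''$. Writing $f(\cdot)=\widehat u(\cdot,\xi'')$, I need
\[ \bigl\|\ip{\xi'}^{-2\beta}(h*f)(\xi')\bigr\|_{L^p(\R^m)}\le \|\ip{\cdot}^{-2\beta}\|_{L^\alpha}\|h\|_{L^{\alpha'}}\|f\|_{L^p}. \]
The plan is Hölder in $\xi'$ followed by Young: $\|w\,(h*f)\|_{L^p}\le\|w\|_{L^\alpha}\|h*f\|_{L^r}$ with $1/r=1/p-1/\alpha$, which is admissible since $\alpha>p$. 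Young then needs $1/r+1=1/\alpha'+1/p$, that is, $1/r=1/p-1/\alpha$. These exponents match exactly, and $r\in[p,\infty]$ is valid. The condition $2\alpha\beta>m$ ensures $\ip{\cdot}^{-2\beta}\in L^\alpha(\R^m)$. The condition $\alpha>m/2$ is not needed for this inequality; it serves the later application. Raising to the $p$-th power and integrating in $\xi''$ gives the $h$ contribution.

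The main obstacle I expect is justifying the convolution formula and Fubini when $\widehat a$ is only in $L^1+L^{\alpha'}$ and $a\in L^\infty$. I would prove the bound first for $u\in\mathcal S$, where $au\in L^2$ and both sides are well defined, by approximating $a$ through mollification. I would then extend by density of $\mathcal S$ in $\FL^p_0$, using the fact that multiplication by $a\in L^\infty$ is continuous in distributions on $L^p$-based limits.
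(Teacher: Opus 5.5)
Your proposal is correct and follows the paper's proof essentially step for step. The steps are the same: reduction to a coordinate projection by an invertible linear change of variables, with translation costing only a unimodular phase; the fibrewise formula $\widehat{au}(\xi',\xi'')=(2\pi)^{-m/2}(\widehat a*\widehat u(\cdot,\xi''))(\xi')$; a weighted fibre estimate integrated over the spectator frequency; and density from Schwartz functions. For that density step, the fact you leave implicit is Hausdorff--Young, $\FL_0^p\subset L^{p'}$ for $p\leq2$. The only difference is that you prove the fibre estimate directly (Minkowski for $g$; H\"older with $1/r=1/p-1/\alpha$ followed by Young for $h$) rather than citing the weighted H\"older--Young lemma of \cite{MingYu2025}, and this correctly shows that $\alpha>m/2$ is not needed for the inequality itself.
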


\begin{proof}
First suppose that $\mathsf M(y,z)=y$, where $y\in\R^m$ and $z\in\R^{d-m}$, and put $v_\omega(\eta)=\widehat u(\eta,\omega)$ for each spectator frequency $\omega\in\R^{d-m}$.  The unitary product formula gives
  \[ \widehat{au}(\eta,\omega) =(2\pi)^{-m/2}[(g+h)*v_\omega](\eta). \]
  We apply the weighted H\"older--Young inequality from our earlier work \cite[Lemma~3.4]{MingYu2025} with $n=m$, $f_1=g$, $f_2=h$, and $\varphi=v_\omega$.  The condition $\alpha>m/2$ is part of \eqref{eq:alpha-conditions-expanded}; since $\alpha>p$, the parameter $\sigma=(1-\alpha/p)_+$ in the cited lemma is zero; and its condition $\beta>m/(2\alpha)$ is exactly $2\alpha\beta>m$.
  Because $\ip{(\eta,\omega)}^{-2\beta}\leq\ip\eta^{-2\beta}$, the cited lemma yields the fibrewise estimate in \eqref{eq:multiplier-norm-expanded}.
  Raising it to the $p$th power and integrating in $\omega$ proves the result when $\mathsf M$ is the coordinate projection.

  For a general surjective $\mathsf M$, choose an invertible linear map $T$ such that $\mathsf MT^{-1}(y,z)=y$.  Fourier transformation under $x=T^{-1}(y,z)$ introduces only the determinant of $T$ and the comparison constants between $\ip\xi$ and $\ip{T^T\xi}$; their product is denoted by $C_{\mathsf M}$.  Replacing $a(y)$ by $a(y-c)$ contributes only a unimodular Fourier phase and hence leaves the relevant norms unchanged.

  For completeness, the product identity is first applied to Schwartz functions.  If $u_j\to u$ in $\FL_0^p$, then Hausdorff--Young gives $u_j\to u$ in $L^{p'}$ because $1\leq p\leq2$.  Since $a(\mathsf M\cdot-c)\in L^\infty$, the products converge in $L^{p'}$ and hence in $\mathcal S'$.  The fibrewise estimate identifies this distributional limit with the limit in $\FL_{-2\beta}^p$, completing the density argument.
\end{proof}

\begin{lemma}[Elliptic lift and exponent descent]\label{lem:lift-descent}
  Let $0<\beta<1/2$ and $1\leq p\leq2$.
  \begin{enumerate}
    \item The resolvent satisfies the exact identity
          \begin{equation}\label{eq:resolvent-lift-expanded} \|(-\Delta+1)^{-1}f\|_{\FL_{2-2\beta}^p} =\|f\|_{\FL_{-2\beta}^p}. \end{equation}
    \item If $1<p\leq2$, $1\leq r<p$, and $1/r-1/p<(1-2\beta)/d$, then $\FL_{2-2\beta}^p(\R^d)$ embeds continuously into $\FL_1^r(\R^d)$.
  \end{enumerate}
\end{lemma}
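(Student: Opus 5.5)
Both parts reduce to direct computations on the Fourier side, using that the resolvent is the Fourier multiplier $\ip{\xi}^{-2}$.

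\emph{Part (i).} For $f\in\mathcal S'$ with $\ip{\cdot}^{-2\beta}\widehat f\in L^p$, we have $\widehat{\res f}(\xi)=\ip{\xi}^{-2}\widehat f(\xi)$. Hence
\[
\ip{\xi}^{2-2\beta}\widehat{\res f}(\xi)=\ip{\xi}^{-2\beta}\widehat f(\xi)
\]
pointwise. Taking $L^p$ norms gives the identity \eqref{eq:resolvent-lift-expanded} with constant exactly one. The only point to note is that $\res$ is well defined on $\mathcal S'$: the symbol $\ip{\xi}^{-2}$ is smooth with polynomially bounded derivatives, so multiplication by it preserves $\mathcal S'$. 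The restriction $\beta<1/2$ plays no role here.

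\emph{Part (ii).} Let $u\in\FL_{2-2\beta}^p$ and write $w=\ip{\cdot}^{2-2\beta}\widehat u\in L^p$. Then
\[
\ip{\xi}\widehat u=\ip{\xi}^{-(1-2\beta)}w .
\]
Apply H\"older's inequality with exponents $p/r$ and $q$, where $1/r=1/p+1/q$:
\[
\|\ip{\cdot}\widehat u\|_{L^r}\le \|\ip{\cdot}^{-(1-2\beta)}\|_{L^q(\R^d)}\,\|w\|_{L^p}.
\]
Since $r<p$, the exponent $q$ lies in $(0,\infty)$; indeed $q\in[1,\infty)$ because $1/q=1/r-1/p\le 1$ when $r\ge1$. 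The weight $\ip{\cdot}^{-(1-2\beta)}$ lies in $L^q(\R^d)$ exactly when $q(1-2\beta)>d$, that is, when $1/r-1/p<(1-2\beta)/d$, which is the hypothesis. Here $\beta<1/2$ ensures that the weight decays at all.

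This gives the continuous embedding with constant $\|\ip{\cdot}^{-(1-2\beta)}\|_{L^q}$, which is finite and explicit. Membership in $\mathcal S'$ is unchanged because the distribution $u$ itself is unchanged. The assumption $p>1$ is needed only so that some $r\in[1,p)$ exists.

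Neither part presents a real obstacle. The only step requiring care is checking the integrability threshold for the weight in Part (ii).
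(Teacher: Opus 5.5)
Your proposal is correct and follows the paper's proof essentially verbatim. Part (i) rests on the exact symbol identity $\ip{\xi}^{2}/(|\xi|^2+1)=1$. Part (ii) factors out the weight $\ip{\xi}^{-(1-2\beta)}$ and applies H\"older's inequality with $1/q=1/r-1/p$, using that the weight lies in $L^q$ exactly when $q(1-2\beta)>d$.
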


\begin{proof}
  Part (i) follows from the exact identity $\ip\xi^2/(|\xi|^2+1)=1$.
 Part (ii) factors the missing weight $\ip\xi^{-(1-2\beta)}$ and applies H\"older's inequality.
  For the second assertion, define $\varkappa$ by $1/\varkappa=1/r-1/p.$
  The strict inequality $1/r-1/p<(1-2\beta)/d$ is equivalent to $\varkappa(1-2\beta)>d$. Therefore
  \begin{align*}
    \|u\|_{\FL_1^r}
     & =\|\ip\xi^{-(1-2\beta)}
    \ip\xi^{2-2\beta}\widehat u(\xi)\|_{L^r(\R^d_\xi)} \\
     & \leq
    \|\ip{\cdot}^{-(1-2\beta)}\|_{L^{\varkappa}(\R^d)}
    \|u\|_{\FL_{2-2\beta}^p},
  \end{align*}
  where we used H\"older's inequality. The proof is completed.
\end{proof}

\begin{lemma}[Structured Wiener multipliers]\label{lem:structured-wiener}
  Let $1\leq p\leq2$, let $\mathsf M:\R^d\to\R^m$ be surjective, and suppose that $\widehat w\in L^1(\R^m)$. Then, for every $c\in\R^m$,
  \begin{equation}\label{eq:structured-wiener}
    \|w(\mathsf M\cdot-c)f\|_{\FL_0^p}
    \leq C_{\mathsf M}(2\pi)^{-m/2}
    \|\widehat w\|_{L^1(\R^m)}\|f\|_{\FL_0^p}.
  \end{equation}
  Consequently, for any finite family of such factors,
  \[
    \left\|\prod_{\ell=1}^r
    w_\ell(\mathsf M_\ell\cdot-c_\ell)f\right\|_{\FL_0^p}
    \leq
    \prod_{\ell=1}^r
    \left[C_{\mathsf M_\ell}(2\pi)^{-m_\ell/2}
    \|\widehat w_\ell\|_{L^1(\R^{m_\ell})}\right]
    \|f\|_{\FL_0^p}.
  \]
  If a further factor $a_0(\mathsf M_0\cdot-c_0)$ satisfies the hypotheses of Lemma~\ref{lem:multiplier}, then the same product maps $\FL_0^p$ to $\FL_{-2\beta}^p$, with operator norm bounded by the right-hand constant in \eqref{eq:multiplier-norm-expanded} times the preceding product of Wiener constants.
\end{lemma}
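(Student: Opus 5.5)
The single-factor estimate \eqref{eq:structured-wiener} is the special case $h=0$ of the computation in the proof of Lemma~\ref{lem:multiplier}, with no weight loss. First take $\mathsf M(y,z)=y$ with $y\in\R^m$, $z\in\R^{d-m}$, and set $v_\omega(\eta)=\widehat f(\eta,\omega)$ for each spectator frequency $\omega$. The product formula gives
\[
\widehat{w(\cdot-c)f}(\eta,\omega)=(2\pi)^{-m/2}\bigl[(e^{-ic\cdot(\cdot)}\widehat w)*v_\omega\bigr](\eta).
\]
Young's inequality $L^1*L^p\subset L^p$ on $\R^m$ then yields the fibrewise bound $\|\cdot\|_{L^p(\R^m_\eta)}\le(2\pi)^{-m/2}\|\widehat w\|_{L^1}\|v_\omega\|_{L^p}$, valid for every $1\le p\le2$. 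Raising this to the $p$th power and integrating in $\omega$ gives \eqref{eq:structured-wiener} for the coordinate projection. No weight appears because we work in $\FL^p_0$.

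For a general surjective $\mathsf M$, repeat the change of variables $x=T^{-1}(y,z)$ from Lemma~\ref{lem:multiplier}. At weight $0$ this costs only a determinant factor, which is absorbed into $C_{\mathsf M}$. The translation $c$ contributes a unimodular phase, so the bound is independent of $c$. The density argument is also the one from Lemma~\ref{lem:multiplier}. One establishes the identity for Schwartz $f$. Then one uses Hausdorff--Young: $\FL^p_0\to L^{p'}$ and $w\in L^\infty$, the latter since $\widehat w\in L^1$. The products therefore converge in $\mathcal S'$, and the limit is identified with the $\FL^p_0$ limit.

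The product statement follows by induction on $r$. Each factor $w_\ell(\mathsf M_\ell\cdot-c_\ell)$ is a bounded operator on $\FL^p_0$ with norm at most $C_{\mathsf M_\ell}(2\pi)^{-m_\ell/2}\|\widehat w_\ell\|_{L^1}$. Composing these operators multiplies the bounds, and multiplication operators commute, so the order of the factors is irrelevant.

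For the final assertion, write the full product as $a_0(\mathsf M_0\cdot-c_0)\cdot\bigl(\prod_\ell w_\ell(\mathsf M_\ell\cdot-c_\ell)f\bigr)$. The inner product lies in $\FL^p_0$, with norm at most the Wiener product times $\|f\|_{\FL^p_0}$. Applying Lemma~\ref{lem:multiplier} to this function gives the $\FL^p_{-2\beta}$ bound with the stated constant. The only point needing care is the order of operations: the Wiener factors must be applied \emph{before} the weight-losing factor $a_0$. Otherwise one would need the Wiener multipliers to act boundedly on $\FL^p_{-2\beta}$. That does hold via $\ip{\xi}^{-2\beta}\le 2^{\beta}\ip{\xi-\eta}^{2\beta}\ip{\eta}^{-2\beta}$, but it would require weighted $L^1$ control of $\widehat w$ and so is not available from $\widehat w\in L^1$ alone. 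Commutativity of multiplication lets us choose the favorable order, so there is no real obstacle. The only delicate step is again the justification on $\mathcal S'$ by density, as above.
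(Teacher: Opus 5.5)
Your proposal is correct and follows the paper's proof essentially step for step. The steps are: fibrewise Young's inequality for a coordinate projection with integration in the spectator frequency; the change of variables, translation phase and density argument borrowed from Lemma~\ref{lem:multiplier}; iteration for the product bound; and commutativity of the multiplication operators, so that all Wiener factors act before the single non-Wiener factor $a_0$. (As a minor aside, at weight zero the determinant factors from the change of variables cancel, because the transforms of $f\circ T^{-1}$ and of the transformed product both rescale by $|\det T|^{1-1/p}$, so absorbing them into $C_{\mathsf M}$ is harmless but not needed.)
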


\begin{proof}
  For a coordinate projection, \eqref{eq:structured-wiener} follows from Young's inequality on each active-frequency fibre and integration in the spectator frequency. The linear change of variables and density argument in the proof of Lemma~\ref{lem:multiplier} give the estimate for a general surjective map; translation contributes only a unimodular Fourier phase. Iterating \eqref{eq:structured-wiener} proves the product bound. Since the factors commute, all Wiener factors may be applied before the single possible non-Wiener factor, to which Lemma~\ref{lem:multiplier} then applies.
\end{proof}

\begin{proposition}[Finite Fourier-exponent bootstrap] \label{prop:bootstrap}
  Suppose $u\in H^1(\R^d)$ solves
  \begin{equation}\label{eq:abstract-equation} (-\Delta+1)u=\sum_{j=1}^dA_j\partial_j u+Cu \quad\text{in }\mathcal D'(\R^d), \end{equation}
  where every $A_j$ and $C$ is a finite linear combination of products
  \[
    a_0(\mathsf M_0x-c_0)
    \prod_{\ell=1}^r w_\ell(\mathsf M_\ell x-c_\ell),
    \qquad r\geq0.
  \]
  The leading factor is interpreted as $1$ when $a_0\equiv1$. Every map occurring in a product is surjective, and $\widehat w_\ell\in L^1(\R^{m_\ell})$. Otherwise, $a_0\in L^\infty(\R^{m_0})$ is the unique non-Wiener factor and, for every $0<\beta<1/2$ and $1\leq p\leq2$, its active Fourier transform satisfies the hypotheses of Lemma~\ref{lem:multiplier} for some $\alpha>\max\{p,m_0/2\}$ with $2\alpha\beta>m_0$. Then
  \[ u\in\B^s(\R^d)\qquad\text{for every }s<2. \]
\end{proposition}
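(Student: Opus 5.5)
The plan is a finite bootstrap in the Fourier exponent $p$, running down from $p=2$ to $p=1$. At each stage the regularity is lifted from $\FL_1^{p}$ to $\FL_{2-2\beta}^{p}$ by the resolvent. We start from $u\in H^1=\FL_1^2$ and write \eqref{eq:abstract-equation} as
\[
u=\res\Big(\sum_j A_j\partial_j u+Cu\Big).
\]
Suppose that $u\in\FL_1^p$ for some $1\leq p\leq2$. Then $\partial_j u\in\FL_0^p$ and $u\in\FL_0^p$. For fixed $0<\beta<1/2$, Lemma~\ref{lem:structured-wiener} combined with Lemma~\ref{lem:multiplier} shows that each product term maps $\FL_0^p$ into $\FL_{-2\beta}^p$. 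Terms with $a_0\equiv1$ map into $\FL_0^p\subset\FL_{-2\beta}^p$. By finite summation the right-hand side lies in $\FL_{-2\beta}^p$, and Lemma~\ref{lem:lift-descent}(i) then gives $u\in\FL_{2-2\beta}^p$.

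To identify $u$ with $\res$ applied to the right-hand side, we note that $(-\Delta+1)$ is invertible on $\mathcal S'$, since its symbol $\ip\xi^2$ is smooth and nonvanishing with polynomial bounds. The distributional equation therefore determines $u$ uniquely.

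For the descent step we use Lemma~\ref{lem:lift-descent}(ii). If $p>1$, we choose $r<p$ with $1/r-1/p<(1-2\beta)/d$, which gives $u\in\FL_1^r$, and then repeat. Each step lowers $1/p$ by a fixed amount close to $(1-2\beta)/d$, so after finitely many steps (about $d/(1-2\beta)$ of them) we reach a $p$ with $1/p$ within $(1-2\beta)/d$ of $1$. At that point we take $r=1$. One more application of the lift then yields $u\in\FL_{2-2\beta}^1=\B^{2-2\beta}$. Since $\beta\in(0,1/2)$ is arbitrary, $u\in\B^s$ for every $s<2$; for $s\leq0$ the claim follows trivially from $\B^{s'}\subset\B^s$ when $s\le s'$. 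We choose the sequence of exponents so that the final step lands exactly at $r=1$, and note that the hypotheses of Lemma~\ref{lem:multiplier} are required for every $p\in[1,2]$, which the assumption grants.

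The main obstacle is justifying that the products $A_j\partial_j u$ and $Cu$, which are defined in $\mathcal D'$ via $L^\infty$ coefficients times $L^2$ functions, coincide with the objects estimated in the Fourier–Lebesgue spaces. We handle this with the density argument already contained in Lemma~\ref{lem:multiplier}. Because $p\leq2$, Hausdorff–Young places $\FL_0^p$ in $L^{p'}$, so the products are honest functions and the estimates apply to them.

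A second point to check is that each Wiener factor preserves $\FL_0^p$ at every stage. This is guaranteed by Lemma~\ref{lem:structured-wiener}, and the result is independent of the order of the factors because multiplication commutes.
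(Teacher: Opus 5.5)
Your proposal is correct and follows the paper's proof. Both start from $H^1=\FL_1^2$, apply the Wiener factors and then the single low-rank multiplier, lift with the resolvent to $\FL_{2-2\beta}^p$, descend through a finite exponent chain ending at $p=1$, and apply one final lift to reach $\B^{2-2\beta}$. The paper fixes the chain explicitly as $1/p_j=\min\{1/2+j\delta,1\}$ with $\delta=(1-2\beta)/(2d)$. One small slip: each descent step raises $1/p$ (it lowers $p$), rather than lowering $1/p$.
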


\begin{proof}
  The bootstrap argument is carried out as follows.   Starting from $H^1=\FL_1^2$, the iteration first applies all structured Wiener factors without Fourier-weight loss and then applies the low-rank multiplier estimate once to the single possible non-Wiener factor. The resolvent gains almost two weighted orders and lowers the Fourier exponent while retaining one weighted order. The explicitly defined finite exponent chain reaches $p=1$; one final elliptic step gives every order below two.

  Fix $s<2$. Choose $\beta$ so that
  \begin{equation}\label{eq:beta-choice-expanded} 0<\beta<\frac12, \qquad s<2-2\beta. \end{equation}
  We now specify the finite exponent chain.  Put
  \[ \delta=\frac{1-2\beta}{2d}, \qquad \frac1{p_j}=\min\left\{\frac12+j\delta,1\right\}, \]
  and let $J$ be the first index for which $p_J=1$. Then $J$ is finite and, for every $j<J$,
  \[ 0<\frac1{p_{j+1}}-\frac1{p_j} \leq\delta<\frac{1-2\beta}{d}. \]

  The initial assumption is $u\in H^1=\FL_1^{p_0}$. Suppose inductively that $u\in\FL_1^{p_j}$. Then
  \[ \|\partial_k u\|_{\FL_0^{p_j}} \leq\|u\|_{\FL_1^{p_j}}, \qquad \|u\|_{\FL_0^{p_j}} \leq\|u\|_{\FL_1^{p_j}}. \]
  At each stage, the iteration estimates every product $A_k\partial_k u$ and $Cu$ by first applying Lemma~\ref{lem:structured-wiener} successively to all Wiener factors. If the coefficient contains a non-Wiener factor, Lemma~\ref{lem:multiplier} is then applied to that factor; otherwise we use $\FL_0^{p_j}\subset\FL_{-2\beta}^{p_j}$. Since all coefficient sums are finite, the right-hand side of \eqref{eq:abstract-equation} belongs to $\FL_{-2\beta}^{p_j}$. The resolvent lift gives $u\in\FL_{2-2\beta}^{p_j}$, and the exponent-descent part of Lemma~\ref{lem:lift-descent} gives $u\in\FL_1^{p_{j+1}}$. Iterating from $j=0$ to $j=J-1$ proves
  \[ u\in\FL_1^1=\B^1. \]
  One final application of the same coefficient estimates and the resolvent lift, now with $p=1$, yields $u\in\B^{2-2\beta}$. Because of \eqref{eq:beta-choice-expanded}, $\B^{2-2\beta}\subset\B^s$, which completes the proof.
\end{proof}

\begin{lemma}[Weighted convolution and translation estimates]
  Let $0<\varepsilon\leq1/2$ and let $G:\R^d\to\mathbb C^m$ be measurable. If $k$ satisfies $\ip{\cdot}^{\varepsilon}k\in L^1(\R^d)$, then
  \begin{equation}\label{eq:weighted-convolution}
    \int_{\R^d}\ip\Xi^{-\varepsilon}|k*G(\Xi)|\dd\Xi
    \leq2^{\varepsilon/2}\|\ip{\cdot}^{\varepsilon}k\|_{L^1(\R^d)}
    \int_{\R^d}\ip\Xi^{-\varepsilon}|G(\Xi)|\dd\Xi.
  \end{equation}
  If, in addition, $G\in W^{1,1}_{\mathrm{loc}}(\R^d)$ and $\ip{\cdot}^{-\varepsilon}\nabla G\in L^1$, then, for every $\Theta\in\R^d$,
  \begin{equation}\label{eq:weighted-translation}
    \int_{\R^d}\ip\Xi^{-\varepsilon}|G(\Xi-\Theta)-G(\Xi)|\dd\Xi
    \leq2^{\varepsilon/2}|\Theta|\ip\Theta^{\varepsilon}
    \int_{\R^d}\ip\Xi^{-\varepsilon}|\nabla G(\Xi)|\dd\Xi.
  \end{equation}
  Consequently, if $\int_{\R^d}k(\Theta)\dd\Theta=1$ and $|\cdot|\ip{\cdot}^{\varepsilon}k\in L^1$, then
  \begin{equation}\label{eq:weighted-localization}
    \int_{\R^d}\ip\Xi^{-\varepsilon}|k*G-G|(\Xi)\dd\Xi
    \leq2^{\varepsilon/2}\||\cdot|\ip{\cdot}^{\varepsilon}k\|_{L^1(\R^d)}
    \int_{\R^d}\ip\Xi^{-\varepsilon}|\nabla G(\Xi)|\dd\Xi.
  \end{equation}
\end{lemma}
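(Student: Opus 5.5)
The core tool is Peetre's inequality in the form
\[
\ip{\Xi}^{-\varepsilon}\leq 2^{\varepsilon/2}\ip{\Theta}^{\varepsilon}\ip{\Xi-\Theta}^{-\varepsilon},
\qquad 0<\varepsilon\leq 1 .
\]
To prove it, start from $\ip{a+b}^2\leq 2\ip a^2\ip b^2$ with $a=\Xi-\Theta$ and $b=\Theta$. This gives $\ip{\Xi-\Theta}\leq 2^{1/2}\ip{\Xi}\ip{\Theta}$ after relabelling, that is, $\ip{\Xi-\Theta}^{\varepsilon}\leq 2^{\varepsilon/2}\ip\Xi^{\varepsilon}\ip\Theta^{\varepsilon}$. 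Rearranging yields the displayed bound.

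For \eqref{eq:weighted-convolution}, write $|k*G(\Xi)|\leq\int|k(\Theta)||G(\Xi-\Theta)|\dd\Theta$. Multiply by $\ip\Xi^{-\varepsilon}$, apply the Peetre bound inside the integral, and use Tonelli to swap the order of integration. The inner integral is $\int\ip{\Xi-\Theta}^{-\varepsilon}|G(\Xi-\Theta)|\dd\Xi=\int\ip\Xi^{-\varepsilon}|G(\Xi)|\dd\Xi$ by translation invariance. What remains is $2^{\varepsilon/2}\int\ip\Theta^{\varepsilon}|k(\Theta)|\dd\Theta$ times that integral. Since $G$ is vector valued, $|\cdot|$ denotes the Euclidean norm, and the triangle inequality for integrals handles this.

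For \eqref{eq:weighted-translation}, first take $G$ smooth. Write
\[
G(\Xi-\Theta)-G(\Xi)=-\int_0^1\nabla G(\Xi-t\Theta)\Theta\dd t,
\]
so that $|G(\Xi-\Theta)-G(\Xi)|\leq|\Theta|\int_0^1|\nabla G(\Xi-t\Theta)|\dd t$. Multiply by $\ip\Xi^{-\varepsilon}$, use Peetre with shift $t\Theta$, so that $\ip{t\Theta}^{\varepsilon}\leq\ip\Theta^{\varepsilon}$, and then apply Tonelli and translation invariance in $\Xi$. This gives the bound with constant $2^{\varepsilon/2}|\Theta|\ip\Theta^{\varepsilon}$.

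The main technical point is justifying this for $G\in W^{1,1}_{\mathrm{loc}}$ only. I would mollify, $G_\delta=\rho_\delta*G$, noting $\nabla G_\delta=\rho_\delta*\nabla G$. By \eqref{eq:weighted-convolution} applied to $\nabla G$, we get $\int\ip\Xi^{-\varepsilon}|\nabla G_\delta|\leq 2^{\varepsilon/2}\|\ip\cdot^\varepsilon\rho_\delta\|_{L^1}\int\ip\Xi^{-\varepsilon}|\nabla G|$, and the prefactor tends to $1$. Since $G_\delta\to G$ in $L^1_{\mathrm{loc}}$, a subsequence converges a.e. Fatou's lemma on the left then gives the inequality for $G$.

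Finally, for \eqref{eq:weighted-localization}, use $\int k=1$ to write
\[
k*G(\Xi)-G(\Xi)=\int k(\Theta)\bigl[G(\Xi-\Theta)-G(\Xi)\bigr]\dd\Theta.
\]
Take absolute values, multiply by $\ip\Xi^{-\varepsilon}$, integrate in $\Xi$, and swap the integrals by Tonelli. Applying \eqref{eq:weighted-translation} to each fixed $\Theta$ gives the factor $2^{\varepsilon/2}\int|k(\Theta)||\Theta|\ip\Theta^{\varepsilon}\dd\Theta$ times the weighted $\nabla G$ integral, which is \eqref{eq:weighted-localization}.

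The step needing care is absolute convergence of $k*G$. The hypothesis $\ip\cdot^{-\varepsilon}\nabla G\in L^1$ together with local integrability forces $\ip\cdot^{-\varepsilon}G$ to be locally controlled, and the Tonelli bound from \eqref{eq:weighted-convolution} shows the integral defining $k*G$ converges absolutely for a.e. $\Xi$. This is the part I would check most carefully.
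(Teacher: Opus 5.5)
\textbf{Verdict.} The structure of your proof is the paper's: Peetre plus Tonelli for \eqref{eq:weighted-convolution}, the fundamental theorem of calculus along the segment plus Peetre with shift $t\Theta$ for \eqref{eq:weighted-translation}, and integration of the translation bound against $|k(\Theta)|\dd\Theta$ for \eqref{eq:weighted-localization}. The paper simply asserts the FTC identity for $G\in W^{1,1}_{\mathrm{loc}}$. Your added mollification step is the one place where the argument fails as written.

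\textbf{The mollification step.} Bounding $\int\ip\Xi^{-\varepsilon}|\nabla G_\delta|$ by \eqref{eq:weighted-convolution} gives the prefactor $2^{\varepsilon/2}\|\ip{\cdot}^{\varepsilon}\rho_\delta\|_{L^1}$. This tends to $2^{\varepsilon/2}$, not to $1$, because Peetre's constant does not improve as the shift shrinks. The smooth case already carries a factor $2^{\varepsilon/2}$. Your Fatou argument therefore yields \eqref{eq:weighted-translation}, and hence \eqref{eq:weighted-localization}, only with $2^{\varepsilon}$ in place of the stated $2^{\varepsilon/2}$.

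There are two ways to repair this.
\begin{enumerate}
\item Keep the mollification, taking $\rho\geq0$ supported in the unit ball. For $|\Theta'|\leq\delta$ use the elementary bound $\ip{\Xi-\Theta'}\leq\ip\Xi+|\Theta'|\leq(1+\delta)\ip\Xi$, which follows by viewing $\ip\Xi$ as the Euclidean norm of $(1,\Xi)$. It gives $\int\ip\Xi^{-\varepsilon}|\rho_\delta*\nabla G|\leq(1+\delta)^{\varepsilon}\int\ip\Xi^{-\varepsilon}|\nabla G|$, and this prefactor does tend to $1$.
\item Skip mollification. Take a representative of $G$ that is absolutely continuous on almost every line parallel to $\Theta$. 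The FTC identity then holds for a.e.\ $\Xi$ directly.
\end{enumerate}

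\textbf{Absolute convergence of $k*G$.} Your proposed justification via \eqref{eq:weighted-convolution} does not work. The integrability $\ip{\cdot}^{-\varepsilon}G\in L^1$ is not a hypothesis of \eqref{eq:weighted-localization}; a constant $G$ is a counterexample. Use the difference instead. By Tonelli and \eqref{eq:weighted-translation},
\[
\int\ip\Xi^{-\varepsilon}\int|k(\Theta)||G(\Xi-\Theta)-G(\Xi)|\dd\Theta\dd\Xi<\infty,
\]
so the inner integral is finite for a.e.\ $\Xi$. Since $k\in L^1$ and $G(\Xi)$ is finite a.e., it follows that $\int|k(\Theta)||G(\Xi-\Theta)|\dd\Theta<\infty$ for a.e.\ $\Xi$. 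The paper does not address this point either. In its applications $\ip{\cdot}^{-\varepsilon}G$ is integrable, so the issue does not arise there.
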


\begin{proof}
  Peetre's inequality, written as
  \[ \ip{Y+\Theta}^{-\varepsilon}\leq2^{\varepsilon/2}\ip Y^{-\varepsilon}\ip\Theta^{\varepsilon}, \]
  and Tonelli's theorem give \eqref{eq:weighted-convolution} after the change of variables $Y=\Xi-\Theta$. The fundamental theorem of calculus gives
  \[ G(\Xi-\Theta)-G(\Xi)=-\int_0^1\Theta\cdot\nabla G(\Xi-t\Theta)\dd t. \]
  Applying the same Peetre inequality with $Y=\Xi-t\Theta$ proves \eqref{eq:weighted-translation}. Finally,
  \[ k*G-G=\int_{\R^d}k(\Theta)\{G(\,\cdot-\Theta)-G\}\dd\Theta \]
  when $\int_{\R^d}k(\Theta)\dd\Theta=1$; integrating \eqref{eq:weighted-translation} against $|k(\Theta)|\dd\Theta$ proves \eqref{eq:weighted-localization}.
\end{proof}

\section{Fourier structure of the Coulombic coefficients}\label{sec:coefficients}

Throughout this section
\[ J(x)=\chi(|x|)|x|, \qquad b(x)=\nabla J(x), \]
where $\chi$ is the fixed radial cut-off from Section~\ref{sec:introduction}. We use ``Wiener profile'' to mean a function whose Fourier transform belongs to $L^1$.
This section identifies the Fourier structure of the coefficients generated by the two conjugations. We first establish a localization principle showing that a cut-off preserves the explicit high-frequency Fourier term of a homogeneous or log-homogeneous singularity up to a rapidly decaying remainder, and then apply it to the angular field $b$, the Laplacian remainder $d_J=\Delta(J-|\cdot|)$, and the logarithmic profile $q$ and its derivatives. Products of two angular profiles are classified according to the rank of their collision coordinates, which distinguishes overlapping three-dimensional configurations from independent six-dimensional ones and identifies the corresponding critical Fourier scales. Every coefficient is thereby written as a finite sum of products in which all Wiener factors act without Fourier-weight loss and the single possible non-Wiener critical block has at most six active variables. The active Fourier transform of that block belongs to $L^1+L^r$ for every $r>1$. This verifies Proposition~\ref{prop:bootstrap} and proves Theorem~\ref{thm:main}(i) for both $\phi$ and $\phi_3$. The sharper high-frequency bounds also yield the five profile estimates used in Section~\ref{sec:quantitative} for Theorem~\ref{thm:main}(ii) and are reused in Section~\ref{sec:sharpness} in the proof of Theorem~\ref{thm:main}(iv).

\begin{lemma}[Localization of homogeneous and log-homogeneous profiles]
  \label{lem:homogeneous-localization} Let $h\in L^1_{\mathrm{loc}}(\R^m)$ be smooth on $\R^m\setminus\{0\}$ and suppose that, for some $\lambda\in\R$, an integer $k\geq0$, and functions $a_0,\ldots,a_k\in C^\infty(\mathbb S^{m-1})$,
  \begin{equation}\label{eq:log-homogeneous-scaling} h(t\omega)=t^\lambda\sum_{j=0}^k(\log t)^ja_j(\omega), \qquad t>0,\quad \omega\in\mathbb S^{m-1}. \end{equation}
  Let $\theta\in C_c^\infty(\R^m)$ equal one near the origin.  Assume that the Fourier transform of the tempered distribution $h$ is represented by a smooth function $H$ on $\R^m\setminus\{0\}$.  Then, for every integer $L>m+\lambda$, there is $C_L<\infty$ such that
  \begin{equation}\label{eq:localization-remainder} |\widehat{\theta h}(\xi)-H(\xi)|\leq C_L|\xi|^{-L}, \qquad |\xi|\geq1. \end{equation}
\end{lemma}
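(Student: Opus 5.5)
The idea is to split $h$ into its singular part near the origin, which $\theta$ leaves untouched, and a remainder that is smooth and symbol-like. Write
\[
\widehat{\theta h}-H=-\widehat{(1-\theta)h}
\]
in the sense of tempered distributions; this holds because $h=\theta h+(1-\theta)h$. Then $g:=(1-\theta)h$ vanishes near the origin, is smooth, and by \eqref{eq:log-homogeneous-scaling} satisfies the symbol-type bounds
\[
|\partial^\gamma g(x)|\leq C_\gamma\ip x^{\lambda-|\gamma|}\bigl(1+\log\ip x\bigr)^k .
\]
These come from differentiating $t^\lambda(\log t)^ja_j(\omega)$: each derivative lowers the homogeneity by one and produces at most the same power of the logarithm. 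So the claim reduces to showing that the Fourier transform of such a log-polyhomogeneous symbol, supported away from the origin, is $\mathcal O(|\xi|^{-L})$ for $|\xi|\geq1$.

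For this I would use the identity $\xi^\gamma\widehat g=(-i)^{|\gamma|}\widehat{\partial^\gamma g}$ with $|\gamma|=L$. Since $L>m+\lambda$, we have $\lambda-L<-m$, so $\partial^\gamma g\in L^1(\R^m)$: the logarithmic factor is harmless because the inequality is strict. Hence $\widehat{\partial^\gamma g}$ is bounded by $(2\pi)^{-m/2}\|\partial^\gamma g\|_{L^1}$. Summing over the multi-indices with $|\gamma|=L$ and using $|\xi|^L\leq C\sum_{|\gamma|=L}|\xi^\gamma|$ gives
\[
|\widehat g(\xi)|\leq C_L|\xi|^{-L}
\]
as a distributional identity away from $\xi=0$.

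The main obstacle is the justification, since $g$ need not be integrable when $\lambda\geq -m$. A priori $\widehat g$ is only a distribution, so we must check that it coincides with a function on $\{|\xi|\geq1\}$. The argument runs as follows.
\begin{enumerate}
\item $\widehat{\partial^\gamma g}$ is a continuous bounded function.
\item $|\xi|^{2L}\widehat g$ equals a finite combination of the $\widehat{\partial^{\gamma}\partial^{\gamma}g}$ (choosing even multi-indices via $|\xi|^{2L}=(\sum\xi_i^2)^L$), and each of these is continuous because its derivative order is at least $L$.
\item Dividing by $|\xi|^{2L}$ on $|\xi|\geq1$ identifies $\widehat g$ there with a continuous function.
\item This function is bounded by $C|\xi|^{-2L}\sum\|\partial^{2\gamma}g\|_{L^1}$, which is in particular at most $C_L|\xi|^{-L}$.
\end{enumerate}
The hypothesis that $H$ is smooth off the origin then makes $\widehat{\theta h}-H$ a genuine pointwise identity on $|\xi|\geq1$, since $\widehat{\theta h}$ is entire, $\theta h$ being a compactly supported distribution. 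Local integrability of $h$ is used only to make $\theta h$ a compactly supported $L^1$ function.
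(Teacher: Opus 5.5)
Your proof is correct and follows essentially the same route as the paper: split off $(1-\theta)h$, use the log-homogeneous derivative bounds to get $\partial^\gamma((1-\theta)h)\in L^1$ for $|\gamma|=L>m+\lambda$, and turn this into $|\xi|^{-L}$ decay of its Fourier transform away from the origin. The only difference is minor. The paper divides by $\xi_\ell^L$ for a coordinate with $|\xi_\ell|\geq|\xi|/\sqrt m$, whereas you use $|\xi|^{2L}=(\sum_i\xi_i^2)^L$; this gives one global identity and makes it slightly cleaner to identify $\widehat g$ with a continuous function off the origin.
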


\begin{proof}

  Set $G=(\theta-1)h$. Since $\theta-1$ vanishes near the origin, $G$ is smooth on all of $\R^m$. The scaling law \eqref{eq:log-homogeneous-scaling} and the product rule imply, for every multi-index $\gamma$,
  \begin{equation*} |\partial^\gamma G(w)| \leq C_\gamma |w|^{\lambda-|\gamma|} (1+\log|w|)^k, \qquad |w|\geq R_\theta, \end{equation*}
  where $R_\theta$ contains the support of $\theta$.  If $|\gamma|=L>m+\lambda$, polar coordinates give
  \[ \int_{|w|\geq R_\theta}|\partial^\gamma G(w)|\dd w \leq C_\gamma|\mathbb S^{m-1}| \int_{R_\theta}^\infty r^{m-1+\lambda-L}(1+\log r)^k\dd r<\infty. \]
  The same derivative is integrable on the bounded region because $G$ is smooth.

  Fix $\xi\ne0$ and choose an index $\ell$ with $|\xi_\ell|\geq|\xi|/\sqrt m$. In the sense of tempered distributions,
  \[ (i\xi_\ell)^L\widehat G(\xi) =\widehat{\partial_\ell^LG}(\xi). \]
  The right-hand side is a bounded continuous function. Its modulus is at most
  \[ (2\pi)^{-m/2}\|\partial_\ell^LG\|_{L^1(\R^m)}. \]
  Therefore, away from $\xi=0$,
  \[ |\widehat G(\xi)| \leq m^{L/2}(2\pi)^{-m/2} \|\partial_\ell^LG\|_{L^1(\R^m)}|\xi|^{-L}. \]
  Finally, $\theta h=h+G$, so $\widehat{\theta h}=H+\widehat G$ away from the Fourier origin. This proves \eqref{eq:localization-remainder}. Any distribution supported at $\xi=0$ is annihilated when the equality is restricted to $\xi\ne0$ and hence has no bearing on the stated high-frequency estimate.
\end{proof}

\begin{lemma}[Angular field with an arbitrary cut-off]
  \label{lem:cutoff-angular-field} Let $\theta\in C_c^\infty(\R^3)$ equal one near the origin, and set $a_\theta(z)=\theta(z)z/|z|$ for $z\neq0$, with any value at $z=0$.
  Then, for $|k|\geq1$,
  \begin{equation} \label{eq:general-angular-tail} \left|\widehat a_\theta(k) +2i\sqrt{\frac2\pi}\frac{k}{|k|^4}\right| \leq C_\theta|k|^{-4}, \end{equation}
  and, on $\R^3$,
  \begin{equation} \label{eq:general-angular-global} |\widehat a_\theta(k)|\leq C_\theta\ip k^{-3}, \qquad |\nabla\widehat a_\theta(k)|\leq C_\theta\ip k^{-4}. \end{equation}
\end{lemma}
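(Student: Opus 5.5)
We apply Lemma~\ref{lem:homogeneous-localization} with $m=3$ to $h(z)=z/|z|$. This profile is homogeneous of degree $\lambda=0$ with $k=0$, and its components are smooth on $\R^3\setminus\{0\}$. The plan is to identify the explicit Fourier transform $H$ of $h$ away from the origin, read off the tail \eqref{eq:general-angular-tail}, and then obtain the global bounds \eqref{eq:general-angular-global} by combining the tail expansion with trivial bounds on the unit ball.

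First we compute $H$. We write $z/|z|=\nabla|z|$. Since $|z|$ is homogeneous of degree $1$ in $\R^3$, its Fourier transform away from $0$ is $c|k|^{-4}$. With the unitary normalization, $\widehat{|z|^{-1}}=\sqrt{2/\pi}\,|k|^{-2}$ and $\Delta|z|=2/|z|$. Hence $-|k|^2\widehat{|z|}=2\sqrt{2/\pi}\,|k|^{-2}$, which gives $\widehat{|z|}=-2\sqrt{2/\pi}\,|k|^{-4}$ on $k\neq0$. Differentiation multiplies by $ik$, so
\[
H(k)=ik\,\widehat{|z|}(k)=-2i\sqrt{\tfrac2\pi}\,\frac{k}{|k|^4},
\]
which is smooth off the origin. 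Lemma~\ref{lem:homogeneous-localization} with $\lambda=0$ and any integer $L>3$ (take $L=4$) then gives $|\widehat{a_\theta}(k)-H(k)|\le C_L|k|^{-4}$ for $|k|\ge1$. This is exactly \eqref{eq:general-angular-tail}. The value at $z=0$ is irrelevant because $\{0\}$ is a null set.

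For the global bounds, note that $a_\theta\in L^1$ with compact support. Therefore $\widehat{a_\theta}$ is smooth and bounded, and so is $\nabla\widehat{a_\theta}=\widehat{-iz\,a_\theta}$. This handles $|k|\le1$. For $|k|\ge1$, the tail estimate gives $|\widehat{a_\theta}(k)|\le|H(k)|+C|k|^{-4}\le C|k|^{-3}$.

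The gradient bound is the only step needing care, since the remainder in Lemma~\ref{lem:homogeneous-localization} is controlled only pointwise and not in derivatives. We handle it by applying Lemma~\ref{lem:homogeneous-localization} again, this time to the profile $\tilde h(z)=z_j z/|z|$ with cut-off $\theta$. This profile is homogeneous of degree $1$ and smooth off $0$, and $\partial_{k_j}\widehat{a_\theta}=-i\,\widehat{\theta\tilde h}$. Its Fourier transform off the origin is $i\partial_{k_j}H$, which is homogeneous of degree $-4$, so $|i\partial_{k_j}H(k)|\le C|k|^{-4}$. With $L=5>3+1$ the lemma gives remainder $O(|k|^{-5})$. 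Hence $|\nabla\widehat{a_\theta}(k)|\le C|k|^{-4}$ for $|k|\ge1$. Combined with boundedness on the unit ball, this yields \eqref{eq:general-angular-global}.
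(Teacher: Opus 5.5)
Your proposal is correct and follows the paper's proof essentially step by step. Both obtain the tail from the explicit transform $-2i\sqrt{2/\pi}\,k/|k|^4$ and Lemma~\ref{lem:homogeneous-localization} with $L=4$. Both obtain the gradient bound by a second application of that lemma to the degree-one profile $z_jz_\ell/|z|$ with $L=5$, and handle $|k|\leq1$ by $L^1$ bounds. Your only addition is that you derive the transform of $z/|z|$ from $\Delta|z|=2/|z|$ and the Coulomb transform, whereas the paper simply quotes it.
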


\begin{proof}
  For each component,
  \[ \widehat{z_j/|z|}(k) =-2i\sqrt{\frac2\pi}\frac{k_j}{|k|^4}, \qquad k\neq0. \]
  Lemma~\ref{lem:homogeneous-localization}, with $m=3$, degree zero, and $L=4$, proves \eqref{eq:general-angular-tail}.  Moreover,
  \[ \partial_{k_\ell}\widehat a_{\theta,j}(k) =-i\,\widehat{\theta(z)z_\ell z_j/|z|}(k). \]
  The uncut profile $z_\ell z_j/|z|$ is homogeneous of degree one.
  Its Fourier transform is therefore a smooth homogeneous function of degree $-4$ away from the origin.  Lemma~\ref{lem:homogeneous-localization}, now applied to this degree-one profile with $L=5$, gives
  \[ \left| \widehat{\theta(z)z_\ell z_j/|z|}(k) -\widehat{z_\ell z_j/|z|}(k) \right| \leq C_\theta|k|^{-5}, \qquad |k|\geq1. \]
  Consequently, $|\partial_{k_\ell}\widehat a_{\theta,j}(k)|\leq C_\theta|k|^{-4}$ for $|k|\geq1$.  Near the origin both transforms are bounded because $a_\theta,z_\ell a_{\theta,j}\in L^1$.  This proves \eqref{eq:general-angular-global}.
\end{proof}

\begin{lemma}[Angular, cut-off, and logarithmic profiles]\label{lem:profiles}
  The following statements hold.
  \begin{enumerate}
    \item There is a constant $C_b$, depending only on $\chi$, such that           \begin{equation}\label{eq:bdecay} \max_{1\leq j\leq3}|\widehat b_j(\xi)| \leq C_b\langle\xi\rangle^{-3}, \qquad \xi\in\R^3. \end{equation}
    \item If $d_J=\Delta(J-|\cdot|)$, then
          \begin{equation}\label{eq:dWiener} \widehat {d_J}\in L^1(\R^3). \end{equation}
    \item Let $q$ be defined by \eqref{eq:q}, and write $\Xi=(\xi,\eta)\in\R^6$. There are constants $C_q$ and a radial $\chi_\infty\in C^\infty(\R^6)$, equal to zero for $|\Xi|\leq1$ and to one for $|\Xi|\geq2$, such that
          \begin{align}
            |\widehat q(\Xi)|
             & \leq C_q\ip\Xi^{-8},                              \label{eq:q8} \\
            |\widehat{\partial_jq}(\Xi)|
             & \leq C_q\ip\Xi^{-7}, \qquad\text{for } 1\leq j\leq6,                             \label{eq:q7} \\
            \widehat{\Delta q}(\xi,\eta)
             & =-768\chi_\infty(\xi,\eta)
            \frac{\xi\cdot\eta}{(|\xi|^2+|\eta|^2)^4}
            +R_q(\xi,\eta),                                      \label{eq:q6}
          \end{align}
          where $R_q\in L^1(\R^6)\cap L^\infty(\R^6)$.  In particular, $\widehat{\nabla q}\in L^1(\R^6)$ and $\widehat{\Delta q}\in L^r(\R^6)$ for every $r>1$.
  \end{enumerate}
\end{lemma}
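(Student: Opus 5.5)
For (i), write $b=\nabla J$ explicitly away from the origin:
\[
b(x)=\chi(|x|)\frac{x}{|x|}+\chi'(|x|)\,x .
\]
Since $\chi=1$ on $[0,1]$, the function $\theta(x)=\chi(|x|)$ belongs to $C_c^\infty(\R^3)$ and equals one near the origin. The first term is therefore exactly $a_\theta$ from Lemma~\ref{lem:cutoff-angular-field}, and \eqref{eq:general-angular-global} gives $|\widehat a_\theta|\leq C_\theta\ip\xi^{-3}$. The second term vanishes for $|x|\leq1$ and for $|x|\geq2$, so it lies in $C_c^\infty(\R^3)$ and its transform is Schwartz. Adding the two bounds proves \eqref{eq:bdecay}, with a constant depending only on $\chi$.

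For (ii), note that $J-|\cdot|=(\chi(|x|)-1)|x|$ is smooth, because it vanishes near the origin. A direct computation away from the origin gives
\[
d_J=-2\,\frac{1-\chi(|x|)}{|x|}+\rho(x),
\]
where $\rho$ collects the terms containing $\chi'$ and $\chi''$, and so lies in $C_c^\infty$ supported in $1\leq|x|\leq2$. Hence $\widehat\rho\in L^1$, and it remains to treat $(1-\theta)/|x|$.

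We apply Lemma~\ref{lem:homogeneous-localization} to $h=1/|x|$ with $m=3$ and $\lambda=-1$. Here $H(\xi)=c|\xi|^{-2}$, and the lemma gives
\[
\widehat{\theta h}=H+O(|\xi|^{-L}),\qquad |\xi|\geq1 .
\]
Thus $\widehat{(1-\theta)h}=H-\widehat{\theta h}$ is $O(|\xi|^{-L})$ for $|\xi|\geq1$, which is integrable. For $|\xi|\leq1$ it is bounded by $c|\xi|^{-2}$ plus a bounded function, since $\theta h\in L^1$. Because $|\xi|^{-2}$ is integrable near $0$ in $\R^3$, this proves \eqref{eq:dWiener}.

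For (iii), set $q_0(x,y)=(x\cdot y)\log(|x|^2+|y|^2)$. This is log-homogeneous on $\R^6$ with $\lambda=2$ and $k=1$. The cut-off $\theta(x,y)=\chi(|x|)\chi(|y|)$ lies in $C_c^\infty(\R^6)$ and equals one near the origin, so Lemma~\ref{lem:homogeneous-localization} applies with any integer $L\geq 9$.

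The key step, and the one I expect to be the main obstacle, is identifying $H=\widehat{q_0}$ away from the Fourier origin. I would proceed as follows.
\begin{enumerate}
\item In $\R^6$, away from $\Xi=0$, one has $\widehat{\log|X|^2}=-c_6|\Xi|^{-6}$; the distribution supported at $\Xi=0$ is irrelevant here. I would derive $c_6$ from the derivative in $s$ at $s=0$ of the Riesz formula for $\widehat{|X|^{s}}$.
\item Multiplication by $x\cdot y=\sum_j x_jy_j$ corresponds to the operator $-\sum_j\partial_{\xi_j}\partial_{\eta_j}$ on the Fourier side.
\item A direct computation gives $\sum_j\partial_{\xi_j}\partial_{\eta_j}|\Xi|^{-6}=48\,\xi\cdot\eta\,|\Xi|^{-10}$.
\end{enumerate}
Together these give $H=C\,\xi\cdot\eta\,|\Xi|^{-10}$ with an explicit constant $C$. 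I would check the constant against $-768$ through $\widehat{\Delta q_0}=-|\Xi|^2H$, carefully tracking the unitary normalization.

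With $H$ in hand, the estimates follow from the localization lemma.
\begin{itemize}
\item For $|\Xi|\geq1$, $|\widehat q|\leq|H|+C_L|\Xi|^{-L}\lesssim|\Xi|^{-8}$. For $|\Xi|\leq1$, $\widehat q$ is bounded because $q\in L^1$. This gives \eqref{eq:q8}.
\item Since $\widehat{\partial_jq}=i\Xi_j\widehat q$, the bound \eqref{eq:q7} follows at once.
\item For $\Delta q$, write $\widehat{\Delta q}=-|\Xi|^2\widehat q$ and split with $\chi_\infty$. The principal part is $-\chi_\infty|\Xi|^2H$, which is the displayed term in \eqref{eq:q6}.
\item The remainder $R_q$ is $-(1-\chi_\infty)|\Xi|^2\widehat q$ plus $-\chi_\infty|\Xi|^2(\widehat q-H)$. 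The first piece is bounded and compactly supported. The second is $O(|\Xi|^{2-L})$ with $L\geq9$, and it is bounded on $1\leq|\Xi|\leq2$. Hence $R_q\in L^1\cap L^\infty$.
\end{itemize}

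Finally, the principal term is bounded and of size $|\Xi|^{-6}$ at infinity in $\R^6$, so it lies in $L^r$ for every $r>1$ but not in $L^1$. This gives $\widehat{\Delta q}\in L^r(\R^6)$ for every $r>1$. Likewise $\ip\Xi^{-7}\in L^1(\R^6)$, so $\widehat{\nabla q}\in L^1(\R^6)$.
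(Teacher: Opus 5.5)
Your proof is correct, and it differs from the paper's in one place, part (ii).

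\textbf{Parts (i) and (iii).} These follow the paper.
\begin{itemize}
\item In (i) you use the same split $b_j=\chi(|x|)x_j/|x|+\chi'(|x|)x_j$ and feed it into Lemma~\ref{lem:cutoff-angular-field}.
\item In (iii) you use the same log-homogeneous localization of $q_0$ with $\lambda=2$, $k=1$, $L\geq 9$, the identity $\widehat{\partial_jq}=i\Xi_j\widehat q$, and the same $\chi_\infty$-splitting of $-|\Xi|^2\widehat q$.
\item Your choice $\theta=\chi(|x|)\chi(|y|)$ makes $q=\theta q_0$ exact. This spares the smooth compactly supported remainder $h_q$ that the paper carries.
\item The constant you leave to be checked comes out right. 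Differentiating the Riesz formula in $s$ at $s=0$ gives $\widehat{\log|X|^2}=-16|\Xi|^{-6}$ away from the origin in $\R^6$, and $16\cdot 48=768$.
\end{itemize}

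\textbf{Part (ii).} Here the routes differ.
\begin{itemize}
\item The paper sets $g_J=(\chi(|x|)-1)/|x|$ and uses the exact identity $\Delta g_J=\chi''(|x|)/|x|\in C_c^\infty(\R^3)$. It concludes $\widehat{g_J}=-|\xi|^{-2}\widehat{w_J}$, after excluding a point-supported (harmonic-polynomial) ambiguity by the decay of $g_J$ at infinity.
\item That route gives an explicit formula: rapid decay at infinity, an $O(|\xi|^{-2})$ singularity at the origin, and no localization machinery.
\item You instead apply Lemma~\ref{lem:homogeneous-localization} to $h=1/|x|$ with $\lambda=-1$, taking $L\geq 4$ so the high-frequency tail is integrable in $\R^3$. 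This keeps the argument uniform with the rest of the section.
\end{itemize}

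\textbf{A point to make explicit.} Your low-frequency step needs $\widehat{(1-\theta)h}=H-\widehat{\theta h}$ as tempered distributions on all of $\R^3$, not just away from $\xi=0$. This does hold: since $-3<-1<0$, one has $\widehat{|x|^{-1}}=\sqrt{2/\pi}\,|\xi|^{-2}$ in $\mathcal S'(\R^3)$, both sides are locally integrable, and there is no point-supported part. You should state this. In (iii) you rightly discard whatever sits at the Fourier origin, but in (ii) integrability near $\xi=0$ depends on there being nothing there.
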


The high-frequency cut-off in \eqref{eq:q6} is part of the statement. Without $\chi_\infty$, the displayed homogeneous function is singular at the Fourier origin and cannot be combined with a global $L^1$ remainder.

\begin{proof}
  We prove the three assertions separately. For $b$ we isolate its degree-zero angular part. For $d_J$ we convert the noncompact Coulomb tail into the Fourier transform of a compactly supported Laplacian. For $q$ we compute the log-homogeneous leading transform and show that localization leaves an integrable remainder.

  \emph{Step 1. The angular field.}
  Differentiating $J$ gives the exact decomposition
  \begin{equation*} b_j(x)=\chi(|x|)\frac{x_j}{|x|}+\chi'(|x|)x_j. \end{equation*}
  The second summand is smooth and compactly supported, hence its Fourier transform is rapidly decreasing.  Lemma~\ref{lem:cutoff-angular-field}, applied with $\theta(x)=\chi(|x|)$, controls the first summand and proves \eqref{eq:bdecay}.

  \emph{Step 2. The Laplacian remainder.}
  The three-dimensional radial Laplacian yields
  \begin{equation}\label{eq:d-formula-expanded} d_J(x)=r\chi''(r)+4\chi'(r)+2\frac{\chi(r)-1}{r}, \qquad \text{for $r=|x|>0$}. \end{equation}
  The first two terms lie in $C_c^\infty(\R^3)$.  Put $g_J(x)=[\chi(|x|)-1]/|x|$.
  This function vanishes near the origin, equals $-|x|^{-1}$ for $|x|\geq2$, and tends to zero at infinity. A direct use of $\Delta f(r)=f''(r)+2f'(r)/r$ gives
  \[ \Delta g_J(x)=\chi''(|x|) |x|^{-1}=:w_J(x), \qquad w_J\in C_c^\infty(\R^3). \]
  There is no distribution supported at the Fourier origin, because $g_J(x)$ tends to zero rather than containing a nonzero polynomial. Hence $\widehat {g_J}(\xi)=-|\xi|^{-2}\widehat {w_J}(\xi)$.
  The factor $|\xi|^{-2}$ is integrable near the origin in dimension three, while $\widehat {w_J}$ decreases faster than every power at infinity.  Thus $\widehat {g_J}\in L^1$.  Formula \eqref{eq:d-formula-expanded} proves \eqref{eq:dWiener}.

  \emph{Step 3. The logarithmic profile.}
  Near $(0,0)$, the cut-offs in $q$ equal one and
  \[ q_0(x,y)=(x\cdot y)\log(|x|^2+|y|^2). \]
  Let $\theta\in C_c^\infty(\R^6)$ equal one on the support of the product $\chi(|x|)\chi(|y|)$. Then $q=\theta q_0+h_q$, where
  $h_q=(\chi(|x|)\chi(|y|)-\theta)q_0$ is smooth and compactly supported.   Hence $\widehat h_q$ is rapidly decreasing.

  Writing $\omega=(\omega_x,\omega_y)\in\mathbb S^5$, the scaling formula is
  \[ q_0(t\omega) =t^2(\omega_x\cdot\omega_y)\log(t^2|\omega|^2) =2t^2(\log t)(\omega_x\cdot\omega_y). \]
  Thus \eqref{eq:log-homogeneous-scaling} holds with $m=6$, $\lambda=2$, $k=1$, $a_0=0$, and $a_1(\omega)=2\omega_x\cdot\omega_y$.  The six-dimensional Riesz-distribution formula gives
  \[ \widehat{\log|(x,y)|^2}(\xi,\eta) =-16(|\xi|^2+|\eta|^2)^{-3}, \qquad (\xi,\eta)\neq(0,0). \]
  Multiplication by $x_jy_j$ becomes $-\partial_{\xi_j}\partial_{\eta_j}$; summing over $j=1,2,3$ therefore gives
  \begin{equation*} \widehat q_0(\xi,\eta) =768\frac{\xi\cdot\eta} {(|\xi|^2+|\eta|^2)^5}, \qquad (\xi,\eta)\ne(0,0), \end{equation*}
  up to distributions supported at the Fourier origin.  Applying Lemma~\ref{lem:homogeneous-localization} with $m=6$, $\lambda=2$, $k=1$, and $L=9$ yields
  \begin{equation}\label{eq:q-localization-error} \left| \widehat q(\xi,\eta) -768\frac{\xi\cdot\eta} {(|\xi|^2+|\eta|^2)^5} \right| \leq C|(\xi,\eta)|^{-9}, \qquad |(\xi,\eta)|\geq1. \end{equation}
Since $|\xi\cdot\eta|\leq|\Xi|^2$, \eqref{eq:q-localization-error} gives $|\widehat q(\Xi)|\leq C|\Xi|^{-8}$ for $|\Xi|\geq1.$
Moreover, $q\in L^1(\R^6)$, so $\widehat q$ is bounded on $|\Xi|\leq1$. This proves \eqref{eq:q8}. Since $\widehat{\partial_jq}(\Xi)   = i\Xi_j\widehat q(\Xi)$ and $\partial_jq\in L^1(\R^6)$, the same argument, followed by taking the maximum over $1\leq j\leq6$, proves \eqref{eq:q7}. Finally, $\ip{\cdot}^{-7}\in L^1(\R^6)$, and hence $\widehat{\nabla q}\in L^1(\R^6)$.

  Multiplication of \eqref{eq:q-localization-error} by $-|\Xi|^2$ gives, for $|\Xi|\geq2$,
  \begin{equation}\label{eq:lap-q-high-frequency-error} \left| \widehat{\Delta q}(\xi,\eta) +768\frac{\xi\cdot\eta}{(|\xi|^2+|\eta|^2)^4} \right|\leq C|\Xi|^{-7}. \end{equation}
  Define $R_q$ by \eqref{eq:q6}. Since $\chi_\infty=1$ on $|\Xi|\geq2$, \eqref{eq:lap-q-high-frequency-error} gives $|R_q(\Xi)|\leq C|\Xi|^{-7}$ for $|\Xi|\geq2.$ On $|\Xi|\leq2$, both terms defining $R_q$ are bounded. Indeed, the cut-off homogeneous term vanishes on $|\Xi|\leq1$ and is bounded on $1\leq|\Xi|\leq2$. Moreover, since $q\in L^1(\R^6)$, $|\widehat{\Delta q}(\Xi)|  =|\Xi|^2|\widehat q(\Xi)|$ is bounded. Hence $R_q\in L^\infty(\R^6)$ and
  \[ \|R_q\|_{L^1(\R^6)} \leq |B_6(0,2)|\|R_q\|_{L^\infty(B_6(0,2))} +C|\mathbb S^5|\int_2^\infty r^{-2}\dd r<\infty. \]
  This proves $R_q\in L^1\cap L^\infty$ and completes the proof.
\end{proof}

Consequently, the gradient of $F_{3,\mathrm{cut}}$ acts as a Wiener multiplier, while each term in its Laplacian contains a single critical radial scale in six collision variables.

\begin{lemma}[Products of two angular collision profiles]
  \label{lem:two-angular-blocks} For $\ell=1,2$, let $\gamma_\ell(x)=L_\ell x-c_\ell$ be a collision variable of the form $x_i-R_\nu$ or $x_i-x_j$, and set $m=\operatorname{rank}\bigl(x\mapsto(L_1x,L_2x)\bigr)\in\{3,6\}.$
  For any components $b_j,b_k$ of $b=\nabla J$, there exist a surjective linear map $\mathsf M:\R^{3N}\to\R^m$ and a profile $a\in L^\infty_c(\R^m)$ such that
  \[ b_j(\gamma_1(x))b_k(\gamma_2(x))=a(\mathsf Mx). \]
  The active profile can be chosen so that, in the rank-six case,
  \[ |\widehat a(\xi,\eta)| \leq C\ip\xi^{-3}\ip\eta^{-3}, \]
  whereas in the rank-three case
  \[ |\widehat a(\xi)|\leq C\ip\xi^{-3}. \]
  In particular, $\widehat a\in L^r(\R^m)$ for every $r>1$.
\end{lemma}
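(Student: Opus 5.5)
We split according to the rank. Write $\gamma_\ell(x)=L_\ell x-c_\ell$, where each $L_\ell:\R^{3N}\to\R^3$ is surjective; it is either $x\mapsto x_i$ or $x\mapsto x_i-x_j$.

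\emph{Rank six.} Here $\mathsf M x=(L_1x,L_2x)$ is surjective onto $\R^6$. We set
\[
a(y,z)=b_j(y-c_1)\,b_k(z-c_2),
\]
which is a bounded, compactly supported tensor product. Translations contribute only unimodular phases, so $\widehat a(\xi,\eta)$ equals, up to such a phase, $\widehat{b_j}(\xi)\widehat{b_k}(\eta)$. The estimate $|\widehat a(\xi,\eta)|\le C\ip\xi^{-3}\ip\eta^{-3}$ then follows at once from \eqref{eq:bdecay} in Lemma~\ref{lem:profiles}(i).

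\emph{Rank three.} Both $L_\ell$ have rank three and the pair has rank three, so their kernels coincide. Hence $L_2=TL_1$ for some invertible $3\times3$ matrix $T$. Inspecting the possible collision variables, $T=\pm I$: equal kernels force the same pair of particle indices, or the same electron in the nuclear case. We take $\mathsf M=L_1$ and
\[
a(y)=b_j(y-c_1)\,b_k(Ty-c_2),
\]
which is again bounded and compactly supported. Two sub-cases arise.

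If $c_2=Tc_1$, which includes the case of the same collision variable, the two singularities coincide. Then $a(y)$ is, after translation and using $b(-z)=-b(z)$, of the form $\pm\chi(|z|)^2 z_jz_k/|z|^2$ plus smooth compactly supported terms coming from the $\chi'$ parts in Step~1 of Lemma~\ref{lem:profiles}. The function $z_jz_k/|z|^2$ is homogeneous of degree $0$. Its Fourier transform away from the origin is smooth and homogeneous of degree $-3$, being a combination of $\delta$ and a Calder\'on--Zygmund kernel; off the origin it is the smooth degree-$(-3)$ part. Lemma~\ref{lem:homogeneous-localization} with $m=3$, $\lambda=0$, $L=4$ then gives $|\widehat a(\xi)|\le C|\xi|^{-3}$ for $|\xi|\ge1$. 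Since $a\in L^1$, $\widehat a$ is bounded on $|\xi|\le1$, and the bound $C\ip\xi^{-3}$ follows.

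If $c_2\ne Tc_1$, which can happen for two different nuclei with the same electron, the singularities are separated by $|c_2-Tc_1|>0$. We use a smooth partition of unity $1=\theta_1+\theta_2+\theta_3$ on $\R^3$ with $\theta_1$ equal to one near $c_1$, $\theta_2$ equal to one near $T^{-1}c_2$, and both vanishing near the other point. On $\supp\theta_1$ the factor $b_k(T\cdot-c_2)$ is smooth, so $\theta_1 a$ is a smooth compactly supported function times a cut-off angular field centred at $c_1$. Its transform is a convolution of a Schwartz function with a function bounded by $C\ip\cdot^{-3}$ through Lemma~\ref{lem:cutoff-angular-field}. Peetre's inequality preserves the $\ip\xi^{-3}$ decay. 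The $\theta_2$ piece is symmetric, and the $\theta_3$ piece is smooth, so its transform is rapidly decreasing.

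The conclusion $\widehat a\in L^r$ for $r>1$ follows from $\ip\cdot^{-3r}\in L^1(\R^3)$ and $\ip\xi^{-3r}\ip\eta^{-3r}\in L^1(\R^6)$. The main care is needed in the rank-three coincident case, namely in checking the degree-zero product and its localization. We expect no real obstacle there, since Lemma~\ref{lem:homogeneous-localization} applies directly.
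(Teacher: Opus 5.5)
Your proof is correct, and its architecture matches the paper's. Both use the tensor-product argument in rank six, the reduction to $L_2=\pm L_1$ in rank three, and the split into coincident singularities ($c_2=Tc_1$, which is the paper's $y_0=0$) and separated ones.

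In the coincident case you invoke the general fact that the Fourier transform of a degree-zero homogeneous function, smooth off the origin, is smooth and homogeneous of degree $-3$ off the origin. The paper instead makes this explicit by writing $y_jy_k/|y|^2=\delta_{jk}/3+H_{jk}(y)/|y|^2$ and applying the spherical-harmonic transform formula. Both arguments then close with Lemma~\ref{lem:homogeneous-localization}.

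The real difference is the separated case. The paper proves $|\widehat{Ab_q}(\xi)|\leq C_A\ip\xi^{-3}$ by Taylor-expanding $A$ at the singular point. This produces a degree-zero leading term, a degree-one term with $|\xi|^{-4}$ decay, and a $W^{4,1}$ remainder. You instead write $\widehat{A\,b_j(\cdot-c_1)}=(2\pi)^{-3/2}\widehat A*\widehat{b_j(\cdot-c_1)}$ and apply Peetre's inequality together with \eqref{eq:bdecay}. This gives a bound $C\|\ip{\cdot}^{3}\widehat A\|_{L^1}\ip\xi^{-3}$ in one line. The step is legitimate precisely because, as you note, $A=\theta_1\,b_k(T\cdot-c_2)$ is smooth and compactly supported: $\theta_1$ vanishes near $T^{-1}c_2$ and $b_k$ is smooth off the origin.

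Your route is shorter and reuses an estimate already proved. The paper's expansion also identifies the leading high-frequency term as $A(0)$ times the angular transform, which is more than the lemma needs.

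One small point you leave implicit is why no rank four or five can occur. Your own observation that each $L_\ell$ is $x\mapsto x_i$ or $x\mapsto x_i-x_j$ means $L_\ell=\lambda_\ell\otimes I_3$. Hence the rank is $3\operatorname{rank}(\lambda_1;\lambda_2)\in\{3,6\}$, exactly as the paper records.
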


\begin{proof}
For each $\ell=1,2$, there is a nonzero row vector $\lambda_\ell\in\R^N$ such that $L_\ell x=\sum_{i=1}^N(\lambda_\ell)_i x_i.$ For an electron--nucleus variable, $\lambda_\ell=e_i$, whereas for an electron--electron variable, $\lambda_\ell=e_i-e_j$. Equivalently, $L_\ell=\lambda_\ell\otimes I_3$. Hence
\[
  \operatorname{rank}(L_1,L_2)
  =3\operatorname{rank}
  \begin{pmatrix}
    \lambda_1\\
    \lambda_2
  \end{pmatrix}
  \in\{3,6\}.
\]

  Suppose first that $m=6$. Then $\mathsf Mx=(L_1x,L_2x)$ is surjective, and we may take
  \[ a(y,z)=b_j(y-c_1)b_k(z-c_2), \qquad (y,z)\in\R^3\times\R^3. \]
  Since translations produce only unimodular Fourier factors, $|\widehat a(\xi,\eta)| =|\widehat b_j(\xi)|\,|\widehat b_k(\eta)|.$
  By \eqref{eq:bdecay}, we have $|\widehat b_\ell(\xi)| \leq C_b\langle\xi\rangle^{-3}$ for $1\leq\ell\leq3.$
  Consequently, $\widehat b_\ell\in L^r(\R^3)$ for every $r>1$.
  Fubini's theorem gives
  \[ \|\widehat a\|_{L^r(\R^6)} =\|\widehat b_j\|_{L^r(\R^3)} \|\widehat b_k\|_{L^r(\R^3)} <\infty. \]

Suppose next that $m=3$. Then $L_2=\sigma L_1$ for some $\sigma\in\{1,-1\}$. Set $y=L_1x-c_1$ and $y_0=\sigma c_2-c_1$.
Since $b$ is odd,
\[ b_j(L_1x-c_1)b_k(L_2x-c_2) =\sigma b_j(y)b_k(y-y_0). \]
The constant factor $\sigma$ does not affect the Fourier estimates.
It remains to distinguish the cases $y_0=0$ and $y_0\neq0$.

Assume first that $y_0\neq0$. Choose disjoint neighborhoods of $0$ and $y_0$ and a smooth partition of unity on the support of $a$. Near $0$, the factor $b_k(y-y_0)$ is smooth, so the corresponding localized term has the form
\[ A(y)b_j(y),\qquad A\in C_c^\infty(\R^3). \]
After translating $y_0$ to the origin, the term localized near $y_0$ has the same form with $b_j$ replaced by $b_k$. The remaining term is smooth and compactly supported.

We claim the local estimate that for every $A\in C_c^\infty(\R^3)$ and every component $b_q$ of $b$,
\begin{equation}\label{eq:smooth-times-angular-decay} |\widehat{A b_q}(\xi)|\leq C_A\langle\xi\rangle^{-3}. \end{equation}
To prove it, choose $\theta\in C_c^\infty(\R^3)$ equal to one near the origin and supported where $\chi=1$. Near the origin, we write $A(y)=A(0)+\nabla A(0)\cdot y+R_2(y)$ with $|\partial^\alpha R_2(y)|\leq C_\alpha |y|^{\max\{2-|\alpha|,0\}}$ for $|\alpha|\leq4.$
Since $b_q(y)=y_q/|y|$ on the support of $\theta$,
\[ A(y)b_q(y)=A(0)\theta(y)\frac{y_q}{|y|}+\theta(y)\bigl(\nabla A(0)\cdot y\bigr)\frac{y_q}{|y|}+\theta(y)R_2(y)\frac{y_q}{|y|}+S(y), \]
where $S\in C_c^\infty(\R^3)$. 
Lemma~\ref{lem:homogeneous-localization} gives Fourier decay $C|\xi|^{-3}$ for the first term and $C|\xi|^{-4}$ for the second. The derivative bounds on $R_2$ imply
\[ \theta(y)R_2(y)\frac{y_q}{|y|}\in W^{4,1}(\R^3). \]
Its Fourier transform is therefore bounded by $C\langle\xi\rangle^{-4}$. Since all four terms belong to $L^1$, their Fourier transforms are bounded near the origin. This proves \eqref{eq:smooth-times-angular-decay}.

Applying \eqref{eq:smooth-times-angular-decay} to the two singular pieces of the partition, and using rapid Fourier decay for the smooth remainder, gives $|\widehat a(\xi)|\leq C\langle\xi\rangle^{-3}.$

It remains to consider $y_0=0$. In this case $a(y)=\sigma b_j(y)b_k(y).$
Since $b_q(y)=y_q/|y|$ near the origin, there are $\theta\in C_c^\infty(\R^3)$, equal to one near the origin, and $S\in C_c^\infty(\R^3)$ such that
\[ a(y)=\sigma\theta(y)\frac{y_jy_k}{|y|^2}+S(y). \]
Decompose
\begin{equation*} \frac{y_jy_k}{|y|^2}=\frac{\delta_{jk}}{3}+\frac{H_{jk}(y)}{|y|^2},\qquad H_{jk}(y)=y_jy_k-\frac{\delta_{jk}}{3}|y|^2. \end{equation*}
The constant term is smooth. 
Since $\Delta H_{jk} =0,$ $H_{jk}$ is a homogeneous harmonic polynomial of degree two. Hence for $\omega\in\mathbb S^2,$ $Y_{jk}(\omega)=H_{jk}(\omega)$ is a spherical harmonic of degree two and $H_{jk}(y)/|y|^2 =Y_{jk}(y/{|y|})$ is homogeneous of degree zero. The Fourier transform formula for homogeneous spherical harmonics gives
\[ \widehat{\frac{H_{jk}}{|\cdot|^2}}(\xi) =-\frac{3\sqrt{2\pi}}2|\xi|^{-3} H_{jk}\left(\frac{\xi}{|\xi|}\right), \qquad \xi\neq0. \]
Lemma~\ref{lem:homogeneous-localization}, applied with $m=3$, $\lambda=0$, $k=0$, and $L=4$, therefore yields $|\widehat a(\xi)|\leq C|\xi|^{-3}$ for $|\xi|\geq1.$
Since $a\in L^1(\R^3)$, its Fourier transform is bounded for $|\xi|\leq1$. Thus $|\widehat a(\xi)|\leq C\langle\xi\rangle^{-3}.$

Consequently, in either rank-three case, $\widehat a\in L^r(\R^3)$ for every $r>1,$ since $\langle\xi\rangle^{-3}\in L^r(\R^3)$ exactly when $r>1$. This completes the proof.
\end{proof}

To avoid repeatedly renaming the same two factors in the conjugated equations below, we write
\[ F=F_{2,\mathrm{cut}},\qquad P=F_{3,\mathrm{cut}}. \]

\begin{proof}[Proof of Theorem~\ref{thm:main}(i)]
  We first derive the two conjugated equations exactly, including all signs and zeroth-order terms. We then classify every coefficient by the preceding profile lemmas. Once the low-rank multiplier hypothesis has been verified, the same Fourier--Lebesgue bootstrap applies to both quotients.

  We first verify that the two conjugation identities used below hold in $H^{-1}(\R^{3N})$, and hence in $\mathcal D'(\R^{3N})$, despite the nonsmooth pairwise cusp factor. For $\delta>0$, set $J_\delta(z)=\chi(|z|)(|z|^2+\delta^2)^{1/2}$, obtain $F_\delta$ from $F$ by replacing $J$ with $J_\delta$, and put $\phi_\delta=e^{-F_\delta}\psi$. For every $v\in H^1(\R^{3N})$, the codimension-three Hardy inequalities give
  \[
    \left\|\frac{v}{|x_i-R_\nu|}\right\|_2\leq2\|\nabla_{x_i}v\|_2,
    \qquad
    \left\|\frac{v}{|x_i-x_j|}\right\|_2^2
    \leq2\bigl(\|\nabla_{x_i}v\|_2^2+\|\nabla_{x_j}v\|_2^2\bigr).
  \]
  Moreover, uniformly in $\delta$,
  \[
    |\nabla F_\delta|\leq C,
    \qquad
    |\Delta F_\delta|+|V|
    \leq C\left(1+\sum_{i,\nu}\frac1{|x_i-R_\nu|}
    +\sum_{i<j}\frac1{|x_i-x_j|}\right).
  \]
  Since $F_\delta\to F$ uniformly and $\nabla F_\delta\to\nabla F$ almost everywhere, dominated convergence and the Hardy inequalities yield
  \[
    \phi_\delta\to\phi\quad\text{in }H^1,
    \qquad
    (\Delta F_\delta)\phi_\delta\to(\Delta F)\phi,
    \quad V\phi_\delta\to V\phi\quad\text{in }L^2.
  \]
  The classical product rule applied to $F_\delta$ gives
  \[
    (-\Delta+1)\phi_\delta
    =2\nabla F_\delta\cdot\nabla\phi_\delta
    +\bigl(\Delta F_\delta-V+|\nabla F_\delta|^2+E+1\bigr)\phi_\delta.
  \]
  Passing to the limit in $H^{-1}$ and using $\Delta F_2=V$ in $\mathcal D'$ gives
  \begin{equation}\label{eq:phi-equation} (-\Delta+1)\phi =2\nabla F\cdot\nabla\phi +\bigl(\Delta(F-F_2)+|\nabla F|^2+E+1\bigr)\phi. \end{equation}
  Conjugation by $e^{F_{2,\mathrm{cut}}}$ therefore replaces the uncut Coulomb potential by bounded angular coefficients and terms supported in the cut-off transition region.

We now record the coefficients in \eqref{eq:phi-equation} explicitly.
Recall that
\[ F=F_{2,\mathrm{cut}} =-\frac12\sum_{i=1}^N\sum_{\nu=1}^L Z_\nu J(x_i-R_\nu) +\frac14\sum_{1\leq i<j\leq N}J(x_i-x_j), \qquad b=\nabla J. \]
Since $b$ is odd, differentiation with respect to $x_i$ gives
\begin{equation}\label{eq:grad-F-explicit}
  \nabla_{x_i}F
  =-\frac12\sum_{\nu=1}^L Z_\nu b(x_i-R_\nu)
  +\frac14\sum_{\substack{1\leq j\leq N\\j\neq i}}
    b(x_i-x_j),
  \qquad 1\leq i\leq N.
\end{equation}
Next, let $d_J=\Delta(J-|\cdot|).$
For a function $f(x_i-x_j)$,
\[ (\Delta_{x_i}+\Delta_{x_j})f(x_i-x_j) =2(\Delta f)(x_i-x_j). \]
It follows that, in distributions,
\begin{equation}\label{eq:lap-F-remainder-explicit} \Delta(F-F_2) =-\frac12\sum_{i=1}^N\sum_{\nu=1}^L Z_\nu d_J(x_i-R_\nu) +\frac12\sum_{1\leq i<j\leq N}d_J(x_i-x_j). \end{equation}
Finally, \eqref{eq:grad-F-explicit} gives
\begin{equation} \label{eq:grad-F-square-explicit}
\begin{aligned}
  |\nabla F|^2
  &=\frac14\sum_{i=1}^N\sum_{\nu,\mu=1}^L
    Z_\nu Z_\mu\,
    b(x_i-R_\nu)\cdot b(x_i-R_\mu)   \\
  &\quad-\frac14\sum_{i=1}^N\sum_{\nu=1}^L
    \sum_{\substack{1\leq j\leq N\\j\neq i}}
    Z_\nu\,b(x_i-R_\nu)\cdot b(x_i-x_j)  \\
  &\quad+\frac1{16}\sum_{i=1}^N
    \sum_{\substack{1\leq j\leq N\\j\neq i}}
    \sum_{\substack{1\leq k\leq N\\k\neq i}}
    b(x_i-x_j)\cdot b(x_i-x_k).
\end{aligned}
\end{equation}

Equations \eqref{eq:grad-F-explicit}, \eqref{eq:lap-F-remainder-explicit}, and \eqref{eq:grad-F-square-explicit} express the coefficients in \eqref{eq:phi-equation} as finite sums of single angular profiles, products of two angular profiles, and Wiener profiles. Fix $0<\beta<1/2$ and $1\leq p\leq2$. By Lemma~\ref{lem:profiles}(i) and Lemma~\ref{lem:two-angular-blocks}, the Fourier transform of every corresponding $m$-dimensional non-Wiener profile, $m\in\{3,6\}$, belongs to $L^r(\R^m)$ for every $r>1$. Choosing $\alpha>\max\left\{p,m/(2\beta)\right\}$, and taking $g=0$ and $h$ equal to that Fourier transform verifies Lemma~\ref{lem:multiplier}. For the copies of $d_J$, Lemma~\ref{lem:profiles}(ii) gives $\widehat {d_J}\in L^1(\R^3)$, so Lemma~\ref{lem:structured-wiener} applies. The constant term $E+1$ is a bounded multiplier on every $\FL_t^p$.
Thus all hypotheses of Proposition~\ref{prop:bootstrap} hold for \eqref{eq:phi-equation}. Since $\phi\in H^1(\R^{3N})$, the proposition yields $\phi\in\B^s(\R^{3N})$ for every $s<2.$ 

  We next verify the second conjugation in the same weak sense. Equations \eqref{eq:q8} and \eqref{eq:q7} imply $\widehat q,\widehat{\partial_jq}\in L^1(\R^6)$ for $1\leq j\leq6$, hence $q,\nabla q\in L^\infty(\R^6)$. For $h(x,y)=(x\cdot y)\log(|x|^2+|y|^2)$, direct differentiation gives
  \[
    |D^2h(X)|\leq C(1+|\log|X||),
    \qquad
    (\Delta_x+\Delta_y)h=16\frac{x\cdot y}{|x|^2+|y|^2}
  \]
  away from the origin. Since $h\in W^{2,1}_{\mathrm{loc}}(\R^6)$, the second identity also holds in distributions and has a bounded right-hand side. The cut-off derivatives are supported away from the origin; hence
  \[
    P\in W^{1,\infty}(\R^{3N}),
    \qquad \Delta P\in L^\infty(\R^{3N}).
  \]
  It follows that $e^{-P}\in W^{1,\infty}$ and $\phi_3=e^{-P}\phi\in H^1$. Mollifying $P$ and passing to the limit in $H^{-1}$ gives, for every $v\in H^1$,
  \[
    \Delta(e^Pv)=e^P\bigl(\Delta v+2\nabla P\cdot\nabla v
    +(\Delta P+|\nabla P|^2)v\bigr)
    \quad\text{in }H^{-1}.
  \]
  Applying this identity to $v=\phi_3$ in \eqref{eq:phi-equation} and collecting the two drift terms gives
\begin{equation} \label{eq:phi3-equation}
\begin{aligned}
(-\Delta+1)\phi_3
     & =2\nabla(F+P)\cdot\nabla\phi_3 \\
     & \quad+\bigl(
    \Delta(F-F_2)+\Delta P+|\nabla(F+P)|^2+E+1
    \bigr)\phi_3.
\end{aligned}
\end{equation}
  By \eqref{eq:q7}, each component of $\nabla P$ is a finite sum of structured Wiener factors, whereas \eqref{eq:q6} shows that each block of $\Delta P$ is a six-dimensional non-Wiener block satisfying Lemma~\ref{lem:multiplier}. After expanding $|\nabla(F+P)|^2$, every coefficient in \eqref{eq:phi3-equation} is a finite sum of products of the form required by Proposition~\ref{prop:bootstrap}: all Wiener factors are controlled successively by Lemma~\ref{lem:structured-wiener}, and the single possible non-Wiener factor is controlled by Lemma~\ref{lem:multiplier}. Since $\phi_3\in H^1(\R^{3N})$, Proposition~\ref{prop:bootstrap} yields $\phi_3\in\B^s(\R^{3N})$ for every $s<2$.
\end{proof}

\section{The quantitative endpoint upper bound}\label{sec:quantitative}

This section proves Theorem~\ref{thm:main}(ii) by tracking the dependence on $\varepsilon=2-s$ with the Fourier exponent fixed at one. We first choose a H\"older exponent that is admissible for every possible non-Wiener block, whose active dimension is at most six, and show that the resulting negative-weight norm is uniformly bounded as $\varepsilon\downarrow0$. We then turn the Fourier classification from Section~\ref{sec:coefficients} into quantitative multiplier bounds. A single critical radial scale contributes $\varepsilon^{-1}$, two independent angular scales contribute at most $\varepsilon^{-2}$, and Wiener profiles remain bounded. These estimates are inserted term by term into the two conjugated equations and grouped according to their pole order. The resolvent gives the required $\B^{2-\varepsilon}$ estimate, while a separate low--high frequency argument with a fixed Fourier-weight loss of $1/2$ controls the $\B^1$ norm independently of $\varepsilon$. Besides proving the quantitative upper bound, this analysis provides the pole classification used in Section~\ref{sec:sharpness}.

For $0<\varepsilon\leq1/2$, set
\begin{equation*}
  \alpha_\varepsilon=\frac{6(1+\sqrt\varepsilon)}{\varepsilon},
  \qquad
  r_\varepsilon=\alpha_\varepsilon'
  =1+\delta_\varepsilon,
  \qquad
  \delta_\varepsilon=\frac{\varepsilon}
  {6(1+\sqrt\varepsilon)-\varepsilon}.
\end{equation*}
Thus $\alpha_\varepsilon\varepsilon=6(1+\sqrt\varepsilon)>6$, which exceeds the active dimension of every possible non-Wiener critical block.

For a block $a(\mathsf Mx-c)$ as in \eqref{eq:structured-block-expanded}, define
\begin{equation}\label{eq:m-def-expanded}
  \mathfrak m_{\varepsilon,\mathsf M}(a)
  =C_{\mathsf M}(2\pi)^{-m/2}
  \inf_{\widehat a=g+h}
  \left(
  \|g\|_{L^1(\R^m)}
  +W_{m,\varepsilon}\|h\|_{L^{r_\varepsilon}(\R^m)}
  \right),
\end{equation}
where the infimum is taken over $g\in L^1(\R^m)$ and $h\in L^{r_\varepsilon}(\R^m)$, and
\begin{equation*} W_{m,\varepsilon} =\|\ip{\cdot}^{-\varepsilon}\|_{L^{\alpha_\varepsilon}(\R^m)}. \end{equation*}
For each scalar profile $a$ used below, let $\mathcal M(a)$ be the finite family of linear parts of the affine collision maps through which $a$ occurs in the two conjugated equations, and set
\begin{equation}\label{eq:m-envelope}
  \mathfrak m_\varepsilon(a)
  :=\max_{\mathsf M\in\mathcal M(a)}
  \mathfrak m_{\varepsilon,\mathsf M}(a).
\end{equation}
The estimate is independent of the translation $c$, so \eqref{eq:m-envelope} covers all translates of a given profile; a maximum over an empty family is understood as zero. We apply the definition separately to the components $b_j$ and $\partial_jq$. Lemma~\ref{lem:multiplier}, with $p=1$ and $2\beta=\varepsilon$, says precisely that
\begin{equation}\label{eq:m-operator-expanded} \|a(\mathsf M\cdot-c)f\|_{\FL_{-\varepsilon}^1} \leq\mathfrak m_{\varepsilon,\mathsf M}(a)\|f\|_{\FL_0^1}. \end{equation}

\subsection{The negative Fourier weight is uniform}

The purpose of this subsection is to prove that the auxiliary factor $W_{m,\varepsilon}$ in \eqref{eq:m-def-expanded} is uniformly bounded for every non-Wiener active dimension $1\leq m\leq6$.
Hence the negative weight creates no endpoint pole, and all singular dependence on $\varepsilon$ in the next subsection comes from the Fourier tails of the coefficients.

By direct calculations or by \cite[Lemma 3.4]{MingYu2025},
\begin{equation}\label{eq:weight-constant} W_{m,\varepsilon} =\left[ \pi^{m/2} \frac{\Gamma((6(1+\sqrt\varepsilon)-m)/2)} {\Gamma(3(1+\sqrt\varepsilon))} \right]^{\varepsilon/[6(1+\sqrt\varepsilon)]}. \end{equation}
For $m=6$, the Gamma recurrence $\Gamma(z+3)=z(z+1)(z+2)\Gamma(z)$ gives the exact expression
\begin{equation}\label{eq:W6-exact-expanded} W_{6,\varepsilon} =\left[ \frac{\pi^3}{3\sqrt\varepsilon(1+3\sqrt\varepsilon) (2+3\sqrt\varepsilon)} \right]^{\varepsilon/[6(1+\sqrt\varepsilon)]}. \end{equation}
This tends to one because its logarithm is
\[ \frac{\varepsilon}{6(1+\sqrt\varepsilon)} \log\frac{\pi^3}{3\sqrt\varepsilon(1+3\sqrt\varepsilon) (2+3\sqrt\varepsilon)}, \]
and $\varepsilon|\log\varepsilon|\to0$. For $m<6$, the Gamma quotient in \eqref{eq:weight-constant} is continuous and finite at $\varepsilon=0$. Therefore the finite number
\begin{equation*} C_W=\sup_{0<\varepsilon\leq1/2} \max_{1\leq m\leq6}W_{m,\varepsilon} \end{equation*}
is well defined.  Thus the weight $W_{m,\varepsilon}$ in \eqref{eq:m-operator-expanded} produces no endpoint pole.

\subsection{The five profile estimates}

This subsection converts the qualitative Fourier information from Section~\ref{sec:coefficients} into the five uniform multiplier bounds needed below. The high-frequency integral for one angular block is evaluated explicitly and produces a simple pole. Products of two independent angular blocks factor into two such integrals and may therefore produce a double pole, whereas overlapping blocks retain only one critical radial scale. The six-dimensional radial tail of $\Delta q$ also gives a simple pole. By contrast, $d_J$ and the components of $\nabla q$ are Wiener profiles and remain uniformly bounded, while a product of an angular and a Wiener profile inherits only the angular simple pole. Taking maxima over the finitely many collision maps determined by the fixed system yields the five constants used in the term-by-term equation estimate.

For the angular profile define the finite cut-off constants
\begin{align*}
  L_b     & =\max_{1\le j\le 3}
  \|\mathbf1_{|\xi|\leq1}\widehat b_j\|_{L^1(\R^3)}, \\
  K_b     & =\max_{1\le j\le 3}\operatorname*{ess\,sup}_{|\xi|>1}
  |\xi|^3|\widehat b_j(\xi)|.
\end{align*}
Both are finite by Lemma~\ref{lem:profiles}.  Since $r_\varepsilon=1+\delta_\varepsilon$, polar coordinates give the exact high-frequency integral
\begin{equation} \label{eq:b-r-exact-expanded}
\begin{aligned}
\int_{|\xi|>1}|\widehat b_j(\xi)|^{r_\varepsilon}\dd\xi
& \leq4\pi K_b^{r_\varepsilon}   \int_1^\infty t^{2-3r_\varepsilon}\dd t \\
& =\frac{4\pi K_b^{r_\varepsilon}}{3\delta_\varepsilon}.       
\end{aligned}
\end{equation}
Splitting $\widehat b_j$ at $|\xi|=1$ in \eqref{eq:m-def-expanded} therefore gives
\begin{equation}\label{eq:b-m-explicit-expanded} \mathfrak m_\varepsilon(b_j) \leq \max_{\mathsf M\in\mathcal M(b_j)}C_{\mathsf M}(2\pi)^{-3/2} \left[ L_b+C_WK_b \left(\frac{4\pi}{3\delta_\varepsilon}\right)^{1/r_\varepsilon} \right]. \end{equation}
The product $\varepsilon\mathfrak m_\varepsilon(b_j)$ is bounded on $(0,1/2]$ because
\begin{equation}\label{eq:eps-delta-exact-expanded} \frac{\varepsilon}{\delta_\varepsilon} =6(1+\sqrt\varepsilon)-\varepsilon, \end{equation}
 Formula \eqref{eq:b-r-exact-expanded} identifies the only divergent integral. One angular block has one critical radial scale.

For two independent angular blocks, Lemma~\ref{lem:two-angular-blocks} reduces the active profile to a tensor product on $\R^3\times\R^3$; hence
\begin{equation*} \|\widehat b_j\otimes\widehat b_k\|_{L^{r_\varepsilon}(\R^6)} =\|\widehat b_j\|_{L^{r_\varepsilon}(\R^3)} \|\widehat b_k\|_{L^{r_\varepsilon}(\R^3)}. \end{equation*}
The two copies of \eqref{eq:b-r-exact-expanded} give at most $\varepsilon^{-2}$.  If the active dimension is three, the second conclusion of Lemma~\ref{lem:two-angular-blocks} gives $|\widehat a(\xi)|\leq C\ip\xi^{-3}$ directly.  The single radial integral in \eqref{eq:b-r-exact-expanded} then gives only a simple pole.

For $\Delta q$, put $\varrho=(|\xi|^2+|\eta|^2)^{1/2}$ and
\[ A_r=\int_{\mathbb S^5}|\omega_\xi\cdot\omega_\eta|^r\dd\omega. \]
The sphere decomposition $\omega=(\sqrt t,a,\sqrt{1-t},c)$ gives, for $r>-1$,
\begin{equation*} A_r=\frac{8\pi^2}{r+1} \frac{\Gamma((r+3)/2)^2}{\Gamma(r+3)}. \end{equation*}
In particular $A_1=2\pi^2/3$.  Since $\chi_\infty=1$ on $\{\varrho\geq2\}$, for every $R\geq2$ the cut-off homogeneous term in \eqref{eq:q6} satisfies the exact radial identity
\begin{equation}\label{eq:dq-radial-expanded} \int_{\{(\xi,\eta)\in\R^6:\,\varrho>R\}} \left|768\frac{\xi\cdot\eta}{\varrho^8}\right|^r \dd\xi\dd\eta =768^rA_r\frac{R^{-6(r-1)}}{6(r-1)}. \end{equation}
There is only one six-dimensional radial scale, so this is a simple pole. Denote the cut-off homogeneous term in \eqref{eq:q6} by
\[
  H_q(\xi,\eta)=-768\chi_\infty(\xi,\eta)
  \frac{\xi\cdot\eta}{\varrho^8}.
\]
The transition annulus $1<\varrho<2$ contributes a uniformly bounded term. More precisely, for $1\leq r\leq r_{1/2}$,
\[
  \|H_q\|_{L^r(\R^6)}^r
  =768^rA_r\frac{2^{-6(r-1)}}{6(r-1)}+E_r,
  \qquad 0\leq E_r\leq C_{\chi_\infty}.
\]
Moreover, $R_q\in L^1\cap L^\infty$ implies
\[
  \sup_{0<\varepsilon\leq1/2}
  \|R_q\|_{L^{r_\varepsilon}(\R^6)}<\infty.
\]
Hence the reverse triangle inequality and \eqref{eq:q6} give
\[
  \varepsilon\left|
  \|\widehat{\Delta q}\|_{L^{r_\varepsilon}}
  -\|H_q\|_{L^{r_\varepsilon}}
  \right|
  \leq\varepsilon\|R_q\|_{L^{r_\varepsilon}}\longrightarrow0.
\]
Since $r_\varepsilon=1+\delta_\varepsilon$,
\[
  \varepsilon\delta_\varepsilon^{-1/r_\varepsilon}
  =\frac{\varepsilon}{\delta_\varepsilon}
  \delta_\varepsilon^{\delta_\varepsilon/(1+\delta_\varepsilon)}
  \longrightarrow6,
\]
where \eqref{eq:eps-delta-exact-expanded} gives $\varepsilon/\delta_\varepsilon\to6$ and $\delta_\varepsilon\log\delta_\varepsilon\to0$. The bounded term $E_{r_\varepsilon}$ disappears in the limit, and $A_{r_\varepsilon}\to A_1=2\pi^2/3$. Therefore
\[
  \lim_{\varepsilon\downarrow0}
  \varepsilon\|H_q\|_{L^{r_\varepsilon}(\R^6)}=768A_1,
\]
and consequently
\begin{equation*}
  \lim_{\varepsilon\downarrow0}
  \varepsilon(2\pi)^{-3}
  \|\widehat{\Delta q}\|_{L^{r_\varepsilon}(\R^6)}
  =(2\pi)^{-3}\,768\,\frac{2\pi^2}{3}=\frac{64}{\pi}.
\end{equation*}

Finally, \eqref{eq:dWiener} and \eqref{eq:q7} imply
\begin{equation}\label{eq:wiener-profile-expanded}
\begin{aligned}
  \mathfrak m_\varepsilon(d_J)
  &\leq\max_{\mathsf M\in\mathcal M(d_J)}
  C_{\mathsf M}(2\pi)^{-3/2}
  \|\widehat {d_J}\|_{L^1(\R^3)},\\
  \mathfrak m_\varepsilon(\partial_jq)
  &\leq\max_{\mathsf M\in\mathcal M(\partial_jq)}
  C_{\mathsf M}(2\pi)^{-3}
  \|\widehat{\partial_jq}\|_{L^1(\R^6)}.
\end{aligned}
\end{equation}
If $b(\mathsf M_0x-c_0)$ is an angular block and $c(\mathsf M_1x-c_1)$ is a Wiener block, Lemma~\ref{lem:structured-wiener}, followed by \eqref{eq:m-operator-expanded}, gives
\[
  \|b(\mathsf M_0\cdot-c_0)c(\mathsf M_1\cdot-c_1)f\|_{\FL_{-\varepsilon}^1}
  \leq\mathfrak m_{\varepsilon,\mathsf M_0}(b)
  C_{\mathsf M_1}(2\pi)^{-m_1/2}
  \|\widehat c\|_{L^1(\R^{m_1})}\|f\|_{\B^0}.
\]
Thus the product inherits only the angular pole, irrespective of the combined active dimension.

We can now define the five constants used in the statement
\begin{equation} \label{eq:profile-constants}
\begin{aligned}
  B_* & =\sup_{0<\varepsilon\leq1/2}
  \varepsilon\sum_{j=1}^3\mathfrak m_\varepsilon(b_j),\\
  H_* & =\sup_{0<\varepsilon\leq1/2}
  \varepsilon\mathfrak m_\varepsilon(\Delta q),              \\
  D_* & =\sup_{0<\varepsilon\leq1/2}
  \mathfrak m_\varepsilon(d_J),                         \\
  G_* & =
  \max_{\mathsf M\in\bigcup_{j=1}^6\mathcal M(\partial_jq)}
  C_{\mathsf M}(2\pi)^{-3}
  \sum_{j=1}^6\|\widehat{\partial_jq}\|_{L^1(\R^6)}.
\end{aligned}
\end{equation}
For each electron $i$, let
\[ \mathcal C_i=\{x\mapsto x_i-R_\nu:1\leq\nu\leq L\} \cup\{x\mapsto x_i-x_j:j\neq i\}. \]
For $\gamma,\gamma'\in\mathcal C_i$, Lemma~\ref{lem:two-angular-blocks} provides a surjective map $\mathsf M_{\gamma,\gamma'}:\R^{3N}\to\R^{m_{\gamma,\gamma'}}$ with $m_{\gamma,\gamma'}\in\{3,6\}$, and a profile $a_{\gamma,\gamma'}\in L_c^\infty(\R^{m_{\gamma,\gamma'}})$ such that
\[ b(\gamma(x))\cdot b(\gamma'(x)) =a_{\gamma,\gamma'}(\mathsf M_{\gamma,\gamma'}x). \]
Fix one such representation for each pair and define
\begin{equation}\label{eq:P-star}
  P_*=\sup_{0<\varepsilon\leq1/2}
  \max_{\substack{1\leq i\leq N\\ \gamma,\gamma'\in\mathcal C_i}}
  \varepsilon^2
  \mathfrak m_{\varepsilon,\mathsf M_{\gamma,\gamma'}}
  (a_{\gamma,\gamma'}).
\end{equation}
Equations \eqref{eq:b-m-explicit-expanded}--\eqref{eq:wiener-profile-expanded} prove that the five constants in \eqref{eq:profile-constants} and \eqref{eq:P-star} are finite and computable from the fixed cut-off profiles and the fixed collision configuration.

\subsection{Term-by-term estimate of the conjugated equations}

This subsection applies the five profile estimates to every term in the conjugated equations for $\phi$ and $\phi_3$. We first introduce a single coefficient count for the two-particle factor and a parameter that distinguishes the two equations. The drift terms, the full expansion of the squared gradient, and the remaining zeroth-order terms are then estimated separately and grouped according to whether they carry two, one, or no critical scales. Finally, a high-frequency contraction at the fixed loss $\varepsilon=1/2$, combined with a low-frequency Cauchy--Schwarz estimate, proves $\|u\|_{\B^1}\leq C\|u\|_{H^1}$ without introducing any additional dependence on $\varepsilon$.

Set
\begin{equation*} Z_\Sigma=\sum_{\nu=1}^LZ_\nu, \qquad c_F=\frac{Z_\Sigma}{2}+\frac{N-1}{4}, \end{equation*}
and let
\[ \tau=0\quad\text{for }u=\phi, \qquad \tau=\frac{\pi-2}{12\pi}\binom N2Z_\Sigma \quad\text{for }u=\phi_3. \]
Thus $\tau$ distinguishes the two conjugated equations and allows us to estimate them without duplicating the argument.
For a fixed electron, the sum of the absolute coefficients in $\nabla F$ is $c_F$; summing over all electrons gives $Nc_F$.
The sum of the absolute coefficients in $P$ is $\tau$ when that factor is present.

For each term of \eqref{eq:phi-equation} or \eqref{eq:phi3-equation}, we first apply Lemma~\ref{lem:structured-wiener} to all Wiener factors and then apply \eqref{eq:m-operator-expanded} to the single possible non-Wiener block. Since $\|\partial_j u\|_{\FL_0^1}\leq\|u\|_{\B^1}$, the two drift terms obey
  \begin{equation} \label{eq:drift-F-Q-expanded}
    \begin{aligned}
    \|2\nabla F\cdot\nabla u\|_{\FL_{-\varepsilon}^1}
   & \leq\frac{2Nc_FB_*}{\varepsilon}\|u\|_{\B^1}, \\
  \|2\nabla P\cdot\nabla u\|_{\FL_{-\varepsilon}^1}
   & \leq2\tau G_*\|u\|_{\B^1}. 
    \end{aligned}
  \end{equation}
Expanding the square and summing absolute coefficient products gives
  \begin{equation*}
    \begin{aligned}
  \||\nabla F|^2u\|_{\FL_{-\varepsilon}^1}
   & \leq\frac{Nc_F^2P_*}{\varepsilon^2}\|u\|_{\B^1}, \\
  \|2\nabla F\cdot\nabla P\,u\|_{\FL_{-\varepsilon}^1}
   & \leq\frac{2Nc_F\tau B_*G_*}{\varepsilon}\|u\|_{\B^1},  \\
  \||\nabla P|^2u\|_{\FL_{-\varepsilon}^1}
   & \leq\tau^2G_*^2\|u\|_{\B^1}.   
    \end{aligned}
  \end{equation*}
The remaining zeroth-order terms satisfy
  \begin{equation} \label{eq:remaining-zero-order}
    \begin{aligned}
    \|\Delta(F-F_2)u\|_{\FL_{-\varepsilon}^1}
   & \leq Nc_FD_*\|u\|_{\B^1}, \\
  \|(\Delta P)u\|_{\FL_{-\varepsilon}^1}
   & \leq\frac{\tau H_*}{\varepsilon}\|u\|_{\B^1},  \\
  \|(E+1)u\|_{\FL_{-\varepsilon}^1}
   & \leq|E+1|\|u\|_{\B^1}.   
    \end{aligned}
  \end{equation}
Grouping \eqref{eq:drift-F-Q-expanded}--\eqref{eq:remaining-zero-order} according to pole order gives
\begin{equation*}
\begin{aligned}
  M_2 & =Nc_F^2P_*, \\
  M_1 & =2Nc_FB_*+\tau H_*+2Nc_F\tau B_*G_*,          \\
  M_0 & =Nc_FD_*+|E+1|+2\tau G_*+\tau^2G_*^2.  
\end{aligned}  
\end{equation*}
For the fixed range $0<\varepsilon\leq1/2$, define
\begin{equation}\label{eq:M-total-expanded} M=M_2+\frac12M_1+\frac14M_0. \end{equation}

\begin{proof}[Proof of Theorem~\ref{thm:main}(ii)]
  Combining the preceding eight inequalities yields
  \[ \|(-\Delta+1)u\|_{\FL_{-\varepsilon}^1} \leq\left(\frac{M_2}{\varepsilon^2} +\frac{M_1}{\varepsilon}+M_0\right)\|u\|_{\B^1}. \]
Since $M_2\varepsilon^{-2}+M_1\varepsilon^{-1}+M_0 \leq M\varepsilon^{-2}$ for $0<\varepsilon\leq1/2$, applying \eqref{eq:resolvent-lift-expanded} with $p=1$ and $2\beta=\varepsilon$ gives
  \begin{equation*} \|u\|_{\B^{2-\varepsilon}} \leq\frac{M}{\varepsilon^2}\|u\|_{\B^1}. \end{equation*}

It remains to control $\|u\|_{\B^1}$ independently of $\varepsilon$. We reuse \eqref{eq:drift-F-Q-expanded}--\eqref{eq:remaining-zero-order} at the fixed loss $\varepsilon=1/2$. Since $\alpha_{1/2}=12(1+2^{-1/2})>12$, Lemma~\ref{lem:multiplier} applies to every possible non-Wiener block for both $p=1$ and $p=2$, while Lemma~\ref{lem:structured-wiener} controls all Wiener factors. Hence those estimates remain valid with $\FL^1$ replaced by $\FL^p$.
Let $\Lambda_0=4M_2+2M_1+M_0$.  For $u\in\{\phi,\phi_3\}$, let $\mathcal T_u f$ be the resolvent applied to the right-hand side of \eqref{eq:phi-equation} or \eqref{eq:phi3-equation}, respectively, with the unknown replaced by $f$.
Thus $u=\mathcal T_u u$.  The preceding coefficient estimates and \eqref{eq:resolvent-lift-expanded}, applied with $\beta=1/4$ and $p\in\{1,2\}$, give
\begin{equation*} \|\mathcal T_u f\|_{\FL_{3/2}^p}\leq\Lambda_0\|f\|_{\FL_1^p},\qquad u\in\{\phi,\phi_3\},\quad p=1,2. \end{equation*}
  Let $\Pi_K$ be the Fourier projection to $|\xi|>K$ and take $K=\max\{1,4\Lambda_0^2\}.$
  On $|\xi|>K$, $\ip\xi\leq\ip K^{-1/2}\ip\xi^{3/2}$, and hence
  \[ \|\Pi_K\mathcal T_u\|_{\FL_1^p\to\FL_1^p} \leq\Lambda_0\ip K^{-1/2}\leq\frac12, \qquad p=1,2. \]
  Write $u_{\rm lo}=(I-\Pi_K)u$ and $u_{\rm hi}=\Pi_Ku$. Since $u=\mathcal T_u u$,
  \[ (I-\Pi_K\mathcal T_u)u_{\rm hi} =\Pi_K\mathcal T_u u_{\rm lo}. \]
  The Neumann series for the inverse converges both in $H^1$ and in $\B^1$, and the two sums represent the same tempered distribution. Therefore
  \[ \|u_{\rm hi}\|_{\B^1} \leq2\|\Pi_K\mathcal T_u u_{\rm lo}\|_{\B^1} \leq\|u_{\rm lo}\|_{\B^1}. \]
  Since $\widehat u_{\rm lo}$ is supported in $B_{3N}(0,K)$, the Cauchy--Schwarz inequality gives
\[ \|u_{\rm lo}\|_{\B^1}\leq |B_{3N}(0,K)|^{1/2}\|u\|_{H^1} =\left(\frac{\pi^{3N/2}K^{3N}}{\Gamma(3N/2+1)}\right)^{1/2}\|u\|_{H^1}. \]
It follows that
\[ \|u\|_{\B^1}\leq\|u_{\rm lo}\|_{\B^1}+\|u_{\rm hi}\|_{\B^1} \leq2\left(\frac{\pi^{3N/2}K^{3N}}{\Gamma(3N/2+1)}\right)^{1/2}\|u\|_{H^1}. \]
  The right-hand side is independent of $\varepsilon$, which completes the proof.
\end{proof}

\section{Optimality among universal factors}\label{sec:one-electron}
\begin{proof}[Proof of Theorem~\ref{thm:main}(iii)]
If such a universal factor existed, the embedding $\B^2(\R^{3N})\hookrightarrow C^2(\R^{3N})$ \cite[Corollary~2.13(1), (2.20)]{LiaoMing2025} would give a state-independent factorization whose quotient is $C^2$ for every
eigenfunction. This is excluded by the optimality result of Fournais et al.~\cite[Theorem~1.1 and the proof of optimality]{FournaisEtAl2005}, which completes the proof. \end{proof}

\section{Sharpness for an unperturbed two-electron atom}\label{sec:sharpness}

Throughout this section, we specialize to $N=2$, $L=1$, and a nucleus of charge $Z>0$ at the origin.  For the bound state in Theorem~\ref{thm:main}(iv), we write $u=\phi_3$.  Define, almost everywhere on $\R^6$,
\begin{equation*} \Gamma(x,y)= \left(\frac{x}{|x|}-\frac{y}{|y|}\right)\cdot\frac{x-y}{|x-y|}, \qquad Q(x,y)=\frac{x\cdot y}{|x|^2+|y|^2}, \qquad \Gamma_0=\Gamma-\frac{16(2-\pi)}{3\pi}Q. \end{equation*}
The values on the three collision sets may be chosen arbitrarily, because these sets have six-dimensional measure zero.  The coefficient in $\Gamma_0$ is the one for which Fournais et al.~construct a homogeneous $C^{1,1}$ second-order correction whose Laplacian equals $\Gamma_0$; see \cite[Lemma~A.1, (A.1)--(A.4)]{FournaisEtAl2005}.  Only its Laplacian $\Gamma_0$ is needed below.

This section proves Theorem~\ref{thm:main}(iv) by isolating the unique term in the two-electron conjugated equation that carries two independent critical frequency scales. We first compute the exact double-pole residue of the localized homogeneous source $\Gamma_0$. We then derive the local conjugated equation, in which $-(Z/4)\Gamma_0u$ is separated from the drift and lower-order terms. Replacing $u$ in this source by $u(0,0)$ leaves the double-pole coefficient unchanged, because one frequency derivative bounds the resulting difference by $C/\varepsilon$. A local coefficient classification and an exterior partition show that every remaining term also contributes at most $C/\varepsilon$. Combining these estimates yields \eqref{eq:sharp-main}; positivity at the triple coalescence then shows that the quadratic rate is attained by a two-electron ground state.

With $\varepsilon=2-s$ and Fourier variables $(\xi,\eta)\in\R^3\times\R^3$ dual to $(x,y)$, the quadratic divergence as $s\uparrow2$ arises from the two nested-frequency cones
\[ 1\ll|\eta|\ll|\xi|, \qquad 1\ll|\xi|\ll|\eta|. \]
On each cone, the leading homogeneous term gives two nested logarithmic radial integrations and hence a contribution of order $\varepsilon^{-2}$. The exact coefficient in Part~(iv) therefore records a genuine nested-frequency effect rather than an artifact of the multiplier estimate. The proof of Part~(iv) uses the exact conjugated equation and does not require convergence of a Fock expansion.

  \subsection{The localized three-particle source residue}

  The aim of this subsection is to prove Proposition~\ref{prop:source-residue}. We first express the localized source in Fourier space as a convolution of cut-off angular profiles together with the $Q$ correction. On the two nested frequency cones, this expression has an explicit homogeneous leading term. The localization error, the pointwise cone error, and the contribution from the complementary region are each shown to be $O(\varepsilon^{-1})$. Exact angular and radial integration of the leading term then gives the coefficient $128\pi$ in \eqref{eq:source-residue}, which supplies the doubly critical contribution used in the proof of Theorem~\ref{thm:main}(iv).

  \begin{proposition}[Localized source residue]\label{prop:source-residue}
    Let $\zeta\in C_c^\infty(\R^6)$ equal one near the origin.  There is $C_{\zeta}<\infty$ such that
    \begin{equation}\label{eq:source-residue} \left| \|\res(\zeta\Gamma_0)\|_{\B^{2-\varepsilon}(\R^6)} -\frac{128\pi}{\varepsilon^2} \right|\leq\frac{C_{\zeta}}{\varepsilon}, \qquad 0<\varepsilon\leq\frac12. \end{equation}
  \end{proposition}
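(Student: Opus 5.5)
Since $\ip\Xi^{2-\varepsilon}/(|\Xi|^2+1)=\ip\Xi^{-\varepsilon}$, the resolvent lift \eqref{eq:resolvent-lift-expanded} with $p=1$ turns the quantity to be estimated into
\[
\|\res(\zeta\Gamma_0)\|_{\B^{2-\varepsilon}}=\int_{\R^6}\ip\Xi^{-\varepsilon}\bigl|\widehat{\zeta\Gamma_0}(\Xi)\bigr|\dd\Xi .
\]
So the plan is a weighted-$L^1$ asymptotic for the Fourier transform of the localized source, in three steps.

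\emph{Step 1: reduce to tensor products.} Split $\Gamma_0$ into
\[
\Gamma=\frac{x}{|x|}\cdot\frac{x-y}{|x-y|}-\frac{y}{|y|}\cdot\frac{x-y}{|x-y|}
\]
and the $Q$ term. The $Q$ term is homogeneous of degree zero in six variables. By Lemma~\ref{lem:homogeneous-localization}, $\widehat{\zeta Q}=O(\ip\Xi^{-6})$, and the radial integral $\int_1^\infty \varrho^{-1-\varepsilon}\dd\varrho$ contributes only $O(\varepsilon^{-1})$.

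Each piece of $\Gamma$ is a rank-six product of two angular fields in independent collision coordinates. For the first piece take $(u,w)=(x,x-y)$; the phase identity $\xi\cdot x+\eta\cdot y=(\xi+\eta)\cdot u-\eta\cdot w$ gives its Fourier transform on $\R^6$ as a tensor product in the frequency variables $(\xi+\eta,-\eta)$. The second piece is analogous with $(y,x-y)$.

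\emph{Step 2: replace $\zeta$ by product cut-offs.} I would replace $\zeta$ by a product of cut-offs $\theta(u)\theta(w)$ in the collision coordinates. The difference $(\zeta-\theta\otimes\theta)\Gamma$ vanishes near the triple point, so near any remaining singular point it is singular in only one collision variable. Writing it as a smooth factor times a single angular block, Lemma~\ref{lem:cutoff-angular-field} and the simple-pole computation \eqref{eq:b-r-exact-expanded} (now carried out at $p=1$ with weight $\ip\cdot^{-\varepsilon}$) give $O(\varepsilon^{-1})$.

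Alternatively, write $\widehat{\zeta\Gamma}=(2\pi)^{-3}\widehat\zeta*G$ with $G$ the transform of the product-cut-off profile. Then \eqref{eq:weighted-localization} bounds the error by $\int\ip\Xi^{-\varepsilon}|\nabla G|$. By \eqref{eq:general-angular-global}, $\nabla G$ gains one power in one of the two frequencies, which kills one logarithm and leaves $O(\varepsilon^{-1})$.

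\emph{Step 3: leading term on the nested cones.} For the product profile, \eqref{eq:general-angular-tail} gives
\[
\widehat a_\theta(k)=-2i\sqrt{2/\pi}\,k|k|^{-4}+O(|k|^{-4}).
\]
Hence, up to a $(2\pi)^{-3/2}$-type normalization, on the cone $1\ll|\eta|\ll|\xi|$ the first piece has the leading term $\frac{8}{\pi}\,(\xi\cdot\eta)|\xi|^{-4}|\eta|^{-4}$, using $\xi+\eta\approx\xi$. The second piece leads on the symmetric cone, and also contributes a comparable term on the cone where its two frequency arguments separate. The errors come from three sources:
\begin{itemize}
\item the $O(|k|^{-4})$ tails;
\item the replacement of $\xi+\eta$ by $\xi$, which costs a factor $|\eta|/|\xi|$;
\item the complement of the cones, where $|\xi|\sim|\eta|$ or one frequency is $O(1)$.
\end{itemize}
Each of these loses one logarithmic integration, so each is $O(\varepsilon^{-1})$.

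The main term is then computed exactly. The angular average of $|\omega\cdot\omega'|$ over $\mathbb S^2\times\mathbb S^2$ is $\tfrac12(4\pi)^2$. The nested radial integral satisfies
\[
\int_1^\infty\!\frac{\dd r}{r}\int_1^r\frac{\dd s}{s}\,r^{-\varepsilon}=\varepsilon^{-2},
\]
with the error from using $r^{-\varepsilon}$ in place of $\ip{(\xi,\eta)}^{-\varepsilon}$ being $O(1)$. Summing the contributions of both pieces over both cones should give exactly $128\pi\,\varepsilon^{-2}$.

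The main obstacle is the bookkeeping of constants in this last step. That means tracking the unitary normalizations and the Jacobian of $(x,y)\mapsto(u,w)$, and checking that the two pieces of $\Gamma$ add on each cone rather than cancel, since the absolute value is taken after summing. I would first verify the coefficient $128\pi$ by computing the symmetric cone $|\xi|\ll|\eta|$ directly in the coordinates $(y,x-y)$, and only then assemble the error bounds.
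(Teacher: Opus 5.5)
\paragraph{The gap: the region $|\xi|\sim|\eta|$ is not uniformly simple-pole.} Your Step 3 claims that the complement of the two nested cones, ``where $|\xi|\sim|\eta|$'', loses one logarithm. As you set it up, this does not hold.

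Write $p=\xi+\eta$ and $q=\eta$, and use one cut-off angular profile $a$ in all three collision coordinates. Then your first piece has transform $\widehat a(p)\cdot\widehat a(-q)$ and your second piece has transform $-\widehat a(p)\cdot\widehat a(p-q)$. Each is a tensor product with \emph{two} doubly critical cones, not one:
\begin{itemize}
\item the first piece is doubly critical on $\{1\ll|\eta|\ll|\xi|\}$ and also on $T=\{1\ll|\xi+\eta|\ll|\eta|\}$;
\item the second piece is doubly critical on $\{1\ll|\xi|\ll|\eta|\}$ and on the same set $T$. This is the ``comparable term where its two frequency arguments separate'' that you noticed.
\end{itemize}
The set $T$ is a high-frequency neighbourhood of the hyperplane $\xi=-\eta$. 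It lies inside the region $|\xi|\sim|\eta|$ that you discard as an $O(\varepsilon^{-1})$ error. Yet on $T$ each piece separately has size about $\frac8\pi|p\cdot q|\,|p|^{-4}|q|^{-4}$, and its weighted integral there is $64\pi\varepsilon^{-2}+O(\varepsilon^{-1})$. Estimating the two pieces separately therefore gives only an upper bound of order $256\pi\varepsilon^{-2}$, so the upper half of \eqref{eq:source-residue} does not follow.

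Your remark that one must check that the pieces ``add on each cone rather than cancel'' also points the wrong way. On $D_4^\pm$ only one piece is leading; the other is $O(R^{-6})$, respectively $O(r^{-6})$. On $T$ the two pieces must cancel.

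\paragraph{The missing idea: the cancellation on $T$.} This cancellation encodes the fact that $a(x)-a(y)$ vanishes on the collision set $\{x=y\}$. You must keep $\Gamma$ in the difference form $\widehat a(p)\cdot\{\widehat a(-q)-\widehat a(p-q)\}$. For $|p|\leq|q|/2$, the mean value theorem and $|\nabla\widehat a(k)|\leq C\ip k^{-4}$ from \eqref{eq:general-angular-global} give
\[
|\widehat a(-q)-\widehat a(p-q)|\leq C|p|\,|q|^{-4}.
\]
On a dyadic shell with $|\xi|,|\eta|\sim2^n$, this bounds the contribution from $|p|\leq|q|/2$ by
\[
C2^{-4n}2^{3n}\int_{|p|\leq2^n}|p|\ip p^{-3}\dd p\leq C.
\]
The part of the shell with $|p|>|q|/2$ is $O(1)$ by size alone. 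Summing $\sum_n2^{-n\varepsilon}$ then gives $O(\varepsilon^{-1})$. This is exactly how the paper proves \eqref{eq:source-complement-error}.

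\paragraph{What is already fine.}
\begin{itemize}
\item Your global bound $|\widehat{\zeta Q}|\leq C\ip\Xi^{-6}$ disposes of the $Q$ term directly. This is simpler than the paper's region-by-region treatment.
\item Your alternative in Step 2 is the paper's cut-off argument via \eqref{eq:weighted-localization}.
\item Your leading constant is correct, with no extra $(2\pi)^{-3/2}$. The changes of variables have unit Jacobian and $(2\pi)^{-3}\widehat\zeta$ has integral $\zeta(0)=1$, so the coefficient is exactly $8/\pi$.
\end{itemize}
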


  The following lemma collects the localization facts used both in the proof of Proposition~\ref{prop:source-residue} and in the subsequent freezing estimate.

  \begin{lemma}[Localized angular source calculus]
    \label{lem:localized-angular-source} Let $\rho\in C_c^\infty(\R^6)$ equal one near the origin.  There is a radial $\chi_\rho\in C_c^\infty(\R^3)$ such that, with
    \[ a_\rho(z)=\chi_\rho(z)\frac{z}{|z|}\quad(z\neq0), \qquad k_\rho=(2\pi)^{-3}\widehat\rho, \]
    and
    \[ \mathcal A_\rho(\xi,\eta) =\widehat a_\rho(\xi+\eta)\cdot \{\widehat a_\rho(-\eta)-\widehat a_\rho(\xi)\}, \]
    one has
    \begin{equation} \label{eq:localized-source-representation} \widehat{\rho\Gamma_0} =k_\rho*\mathcal A_\rho -\frac{16(2-\pi)}{3\pi}\widehat{\rho Q}, \qquad \int_{\R^6}k_\rho(\Theta)\dd\Theta=1. \end{equation}
    The radial choice of $\chi_\rho$ also gives
    \begin{equation} \label{eq:localized-F-symmetry} a_\rho(-z)=-a_\rho(z),\qquad \widehat a_\rho(-k)=-\widehat a_\rho(k),\qquad \mathcal A_\rho(\xi,\eta)=\mathcal A_\rho(\eta,\xi). \end{equation}
    The angular estimates \eqref{eq:general-angular-tail} and \eqref{eq:general-angular-global} hold with $\theta=\chi_\rho$ and $a_\theta=a_\rho$.
    If $\Xi=(\xi,\eta)$, then
    \begin{align}
      \left|\widehat{\rho Q}(\xi,\eta)
        +48\frac{\xi\cdot\eta}{(|\xi|^2+|\eta|^2)^4}\right|
      &\leq C_\rho|\Xi|^{-7},
      &&|\Xi|\geq1,                                      \label{eq:localized-Q-tail}\\
      |\nabla\widehat{\rho Q}(\Xi)|
      &\leq C_\rho\ip\Xi^{-7},
      &&\Xi\in\R^6.                                     \label{eq:localized-Q-gradient}
    \end{align}
    Finally,
    \begin{equation} \label{eq:localized-F-derivative} \int_{\R^6}\ip\Xi^{-\varepsilon} |\nabla \mathcal A_\rho(\Xi)|\dd\Xi \leq\frac{C_\rho}{\varepsilon}, \qquad 0<\varepsilon\leq\frac12. \end{equation}
  \end{lemma}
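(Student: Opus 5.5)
The plan is to use the geometric structure of $\Gamma$ on the support of $\rho$ to build $\chi_\rho$, then read off each claim from the profile lemmas of Section~\ref{sec:coefficients}.

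\textbf{Choice of $\chi_\rho$ and the representation.} Choose $\chi_\rho$ radial with $\chi_\rho=1$ on a ball containing the projections $x$, $y$ and $x-y$ of $\supp\rho$. Then $\rho\Gamma=\rho\,a_\rho(x-y)\cdot\{a_\rho(x)-a_\rho(y)\}$ almost everywhere. The factor $a_\rho(x-y)\cdot a_\rho(x)$ is a function of the rank-six map $(x,y)\mapsto(x-y,x)$. Its Fourier transform is a delta-free product in transformed variables. A change of variables gives $\widehat{a_\rho(x-y)a_\rho(x)}(\xi,\eta)=(2\pi)^{3}\widehat a_\rho(-\eta)\cdot\widehat a_\rho(\xi+\eta)$, up to the normalization factor, and similarly for the $y$ term. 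The product with $\rho$ then becomes convolution with $(2\pi)^{-3}\widehat\rho=k_\rho$.

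I need to track the signs and the $(2\pi)$ factors carefully; oddness of $a_\rho$ is what produces the combination $\widehat a_\rho(-\eta)-\widehat a_\rho(\xi)$. The identity $\int k_\rho=(2\pi)^{-3}(2\pi)^3\rho(0)=1$ is immediate.

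\textbf{Symmetry and angular estimates.} Since $\chi_\rho$ is radial, $a_\rho$ is odd, and hence so is $\widehat a_\rho$. Swapping $\xi$ and $\eta$ and using oddness twice gives $\mathcal A_\rho(\eta,\xi)=\mathcal A_\rho(\xi,\eta)$. The angular estimates are exactly Lemma~\ref{lem:cutoff-angular-field} with $\theta=\chi_\rho$.

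\textbf{The $Q$ estimates.} $Q$ is homogeneous of degree zero in $\R^6$, so Lemma~\ref{lem:homogeneous-localization} applies with $m=6$, $\lambda=0$, $L=7$. I compute its transform as in Step~3 of Lemma~\ref{lem:profiles}: $(x\cdot y)|X|^{-2}$ is $-\sum_j\partial_{\xi_j}\partial_{\eta_j}$ applied to $\widehat{|X|^{-2}}=c|\Xi|^{-4}$. This gives $c\cdot 24\,\xi\cdot\eta\,|\Xi|^{-8}$, and the constant should match $-48$, which I will check. For the gradient bound, write $\partial_{\Xi_k}\widehat{\rho Q}=-i\widehat{X_k\rho Q}$. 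Here $X_kQ$ is homogeneous of degree one, so Lemma~\ref{lem:homogeneous-localization} gives $C|\Xi|^{-7}$, and boundedness near the origin follows from integrability.

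\textbf{The derivative bound, which is the main obstacle.} The product rule gives terms like $\nabla\widehat a_\rho(\xi+\eta)\,\widehat a_\rho(\xi)$ and $\widehat a_\rho(\xi+\eta)\,\nabla\widehat a_\rho(\xi)$. These are bounded by $\ip{\xi+\eta}^{-4}\ip\xi^{-3}$ or $\ip{\xi+\eta}^{-3}\ip\xi^{-4}$, and similarly with $\eta$.

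I change variables to $(\zeta,\xi)=(\xi+\eta,\xi)$, which is unimodular. I use $\ip\Xi^{-\varepsilon}\le\ip{\zeta}^{-\varepsilon/2}\ip\xi^{-\varepsilon/2}$ up to a constant; both are at most $C\ip{\Xi}$, since $|\Xi|\gtrsim\max(|\zeta|,|\xi|)$. The integral then factors as $\int_{\R^3}\ip\zeta^{-4-\varepsilon/2}d\zeta\cdot\int_{\R^3}\ip\xi^{-3-\varepsilon/2}d\xi\lesssim 1\cdot\varepsilon^{-1}$. Only one factor is critical, so the result is a single pole.

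The delicate point is making sure that no term has both factors of degree $-3$ after differentiation. This holds because a gradient always lands on one factor, giving it decay $-4$.
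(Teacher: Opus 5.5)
Your proposal is correct and follows the paper's proof: the same radial $\chi_\rho$, the same determinant-one changes of variables $(x,x-y)$ and $(y,x-y)$, the same six-dimensional Riesz computation with localization for $Q$, and the same gradient bound $\ip{p}^{-4}\ip{q}^{-3}+\ip{p}^{-3}\ip{q}^{-4}$. For the last integral you split the weight as $\ip{\Xi}^{-\varepsilon}\le C\ip{\zeta}^{-\varepsilon/2}\ip{\xi}^{-\varepsilon/2}$ instead of dividing into the regions $|p|\le|q|$ and $|q|\le|p|$ as the paper does; this is an equally valid and slightly shorter way to reach $C/\varepsilon$.

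When you fix the constants, three small points need correcting. With the unitary normalization the product transform is exactly $\widehat a_\rho(\xi+\eta)\cdot\widehat a_\rho(-\eta)$, with no factor $(2\pi)^3$, and the representation does not use oddness at all. In the $Q$ computation, the minus sign in $-\sum_j\partial_{\xi_j}\partial_{\eta_j}$ gives $-24c$, and with $\widehat{|X|^{-2}}=2|\Xi|^{-4}$ this is the required $-48$.
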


  \begin{proof}
    The images of $\supp\rho$ under $(x,y)\mapsto x,y,x-y$ have compact union.  Choose $\chi_\rho$ radial and equal to one on a ball containing this union.  Then, almost everywhere,
    \[ \rho\Gamma =\rho\,(a_\rho(x)-a_\rho(y))\cdot a_\rho(x-y). \]
    Both sides lie in $L^1(\R^6)$, so the identity also holds in $\mathcal S'(\R^6)$.  The determinant-one changes of variables $(z,w)=(x,x-y)$ and $(z,w)=(y,x-y)$, followed by the unitary product formula, give \eqref{eq:localized-source-representation}.  Fourier inversion gives $\int_{\R^6}k_\rho(\Theta)\dd\Theta=\rho(0)=1$.

    Since $\chi_\rho$ is radial, $a_\rho$ is odd and hence so is its Fourier transform.  Therefore
    \[ \mathcal A_\rho(\eta,\xi) =\widehat a_\rho(\xi+\eta)\cdot \{\widehat a_\rho(-\xi)-\widehat a_\rho(\eta)\} =\mathcal A_\rho(\xi,\eta), \]
    which proves \eqref{eq:localized-F-symmetry}.

    Put $s=|\xi|^2+|\eta|^2$.  In six dimensions the Riesz identity is $\widehat{|X|^{-2}}(\Xi)=2s^{-2}$.  Since $Q=(x\cdot y)|X|^{-2}$ and multiplication by $x_jy_j$ becomes $-\partial_{\xi_j}\partial_{\eta_j}$, direct differentiation gives
    \[ -\partial_{\xi_j}\partial_{\eta_j}(2s^{-2}) =-48\xi_j\eta_j s^{-4}. \]
    Summing over $j=1,2,3$ yields
    \[ \widehat Q(\xi,\eta) =-48\frac{\xi\cdot\eta}{(|\xi|^2+|\eta|^2)^4}, \qquad \Xi\neq0. \]
    Applying Lemma~\ref{lem:homogeneous-localization} to the degree-zero profile $Q$ with $L=7$ proves \eqref{eq:localized-Q-tail}.  For each component, $\partial_{\Xi_\ell}\widehat{\rho Q}=-i\widehat{X_\ell\rho Q}$.
    The profile $X_\ell Q$ is homogeneous of degree one; the localization lemma with $L=8$ shows that its localized transform differs from its degree $-7$ homogeneous transform by at most $C_\rho|\Xi|^{-8}$.  Since $X_\ell\rho Q\in L^1$, the transform is also bounded near the origin.
    This proves \eqref{eq:localized-Q-gradient}.

    It remains to prove \eqref{eq:localized-F-derivative}.  Under either of the determinant-one linear changes
    \[ (p,q)=(\xi+\eta,-\eta), \qquad (p,q)=(\xi+\eta,\xi), \]
    each summand of $\mathcal A_\rho$ is the bilinear dot product of $\widehat a_\rho(p)$ and $\widehat a_\rho(q)$, and the Japanese brackets are uniformly comparable.  By \eqref{eq:general-angular-global}, its gradient is bounded by
    \[ C_\rho\bigl(\ip p^{-4}\ip q^{-3} +\ip p^{-3}\ip q^{-4}\bigr). \]
    On $|p|\leq|q|$, the second term is bounded by the first and $\ip{(p,q)}^{-\varepsilon}\leq\ip q^{-\varepsilon}$; hence
    \[ \int_{\{(p,q)\in\R^6:\,|p|\leq|q|\}} \ip{(p,q)}^{-\varepsilon} |\nabla \mathcal A_\rho|\dd p\dd q \leq C_\rho\!\int_{\R^3}\ip p^{-4}\dd p \int_{\R^3}\ip q^{-3-\varepsilon}\dd q \leq\frac{C_\rho}{\varepsilon}. \]
    The region $|q|\leq|p|$ is symmetric.  This proves \eqref{eq:localized-F-derivative}.
  \end{proof}

  \begin{proof}[Proof of Proposition~\ref{prop:source-residue}]
Write $\Xi=(\xi,\eta)$, $R=|\xi|$, and $r=|\eta|$. Whenever $Rr>0$, set $\omega=\xi/R$ and $\nu=\eta/r$.
Set
\[ D_4^+=\{R\geq4r\geq16\}, \qquad D_4^-=\{r\geq4R\geq16\}, \qquad \Omega_4=D_4^+\cup D_4^-. \]
The two sets in \(\Omega_4\) are the nested frequency cones on which the two radial variables can generate independent divergences.  This frequency geometry is summarized in Figure~\ref{fig:frequency-region-decomposition}.
Panel~(a) displays the cones and the three types of regions used to cover their complement.  Panel~(b) records the dyadic consequence used in Step~4.

\begin{figure}[t]
  \centering
  \begin{tikzpicture}[
      x=0.82cm,
      y=0.82cm,
      >=Latex,
      axis/.style={->,line width=0.75pt},
      boundary/.style={line width=0.75pt},
      guide/.style={densely dashed,gray!75,line width=0.45pt},
      threshold/.style={densely dotted,gray!75,line width=0.45pt},
      region label/.style={font=\scriptsize,align=center},
      panel label/.style={font=\small\bfseries,anchor=west}
    ]

            \begin{scope}
      \def\xmax{7.0}
      \def\ymax{7.0}
      \def\four{1.25}
      \def\eight{2.50}
      \def\sixteen{5.00}

      \node[panel label] at (0,7.75) {(a) Regions in the $(r,R)$-plane};

                  \fill[gray!12] (0,0) rectangle (\xmax,\ymax);
      \fill[orange!16] (0,0) rectangle (\eight,\eight);
      \fill[green!15]  (0,\eight) rectangle (\four,\ymax);
      \fill[yellow!22] (\eight,0) rectangle (\xmax,\four);
      \fill[red!18]
        (\four,\sixteen) -- (1.75,\ymax) -- (\four,\ymax) -- cycle;
      \fill[blue!17]
        (\sixteen,\four) -- (\xmax,1.75) -- (\xmax,\four) -- cycle;

            \draw[boundary,red!70!black]
        (\four,\sixteen) -- (1.75,\ymax);
      \draw[boundary,blue!70!black]
        (\sixteen,\four) -- (\xmax,1.75);
      \draw[guide] (\four,0) -- (\four,\ymax);
      \draw[guide] (0,\four) -- (\xmax,\four);
      \draw[threshold] (\eight,0) -- (\eight,\eight);
      \draw[threshold] (0,\eight) -- (\eight,\eight);
      \draw[guide] (\four,\sixteen) -- (0,\sixteen);
      \draw[guide] (\sixteen,\four) -- (\sixteen,0);

            \draw[axis] (0,0) -- (\xmax+0.35,0) node[below] {$r=|\eta|$};
      \draw[axis] (0,0) -- (0,\ymax+0.35) node[left] {$R=|\xi|$};
      \foreach \x/\lab in {\four/4,\eight/8,\sixteen/16}
        \draw (\x,0.08) -- (\x,-0.08)
          node[below=2pt,font=\scriptsize] {$\lab$};
      \foreach \y/\lab in {\four/4,\eight/8,\sixteen/16}
        \draw (0.08,\y) -- (-0.08,\y)
          node[left=2pt,font=\scriptsize] {$\lab$};

            \node[region label,text=red!65!black,anchor=west] (pluslabel)
        at (2.05,6.45) {Step 2\\[-1pt] $D_4^+$\\[-1pt]
          $R\geq4r$, $r\geq4$};
      \draw[-Latex,red!65!black,thin]
        (pluslabel.west) -- (1.50,6.12);
      \node[region label,text=blue!65!black,anchor=west] (minuslabel)
        at (4.05,2.28) {Step 2\\[-1pt] $D_4^-$\\[-1pt]
          $r\geq4R$, $R\geq4$};
      \draw[-Latex,blue!65!black,thin,shorten <=1pt,shorten >=1pt]
        (minuslabel.south) -- (5.92,1.48);
      \node[region label] at (4.15,4.10)
        {high--high complement\\[-1pt] $R/4<r<4R$};
      \node[region label,rotate=90] at (0.61,4.05)
        {one-radius strip\\[-1pt] $r<4$, $R\geq8$};
      \node[region label] at (4.70,0.58)
        {one-radius strip\\[-1pt] $R<4$, $r\geq8$};
      \node[region label] at (1.40,1.52)
        {bounded core\\[-1pt] $R,r<8$};

      \fill[red!70!black]  (\four,\sixteen) circle (1.25pt);
      \fill[blue!70!black] (\sixteen,\four) circle (1.25pt);
    \end{scope}

        \begin{scope}[shift={(8.55,0.35)}]
      \def\amax{6.4}
      \def\jmin{0.90}
      \def\band{1.35}

      \node[panel label] at (0,7.40) {(b) Dyadic shell indices};

      \begin{scope}
        \clip (\jmin,\jmin) rectangle (\amax,\amax);
        \fill[gray!14]
          (\jmin,\jmin) -- (\amax,\jmin) -- (\amax,\amax)
          -- (\jmin,\amax) -- cycle;
        \fill[red!18]
          (\jmin,\jmin+\band) -- (\amax-\band,\amax)
          -- (\jmin,\amax) -- cycle;
        \fill[blue!17]
          (\jmin+\band,\jmin) -- (\amax,\amax-\band)
          -- (\amax,\jmin) -- cycle;
      \end{scope}

      \draw[guide] (\jmin,0) -- (\jmin,\amax);
      \draw[guide] (0,\jmin) -- (\amax,\jmin);
      \draw[boundary,red!70!black]
        (\jmin,\jmin+\band) -- (\amax-\band,\amax)
        node[pos=0.82,above left=-1pt,font=\scriptsize] {$j-k=2$};
      \draw[boundary,blue!70!black]
        (\jmin+\band,\jmin) -- (\amax,\amax-\band)
        node[pos=0.82,below right=-1pt,font=\scriptsize] {$k-j=2$};
      \draw[densely dashed,gray!60]
        (\jmin,\jmin) -- (\amax,\amax);

            \foreach \x in {1.35,2.05,...,5.55}
        \foreach \y in {1.35,2.05,...,5.55}
          {
            \pgfmathparse{abs(\x-\y) < 1.20 ? 1 : 0}
            \ifnum\pgfmathresult=1
              \fill[gray!65] (\x,\y) circle (0.85pt);
            \fi
          }
      \draw[very thick,black!70,rounded corners=1pt]
        (3.80,3.80) rectangle (4.48,4.48);
      \node[region label,anchor=west] at (4.58,4.15)
        {$\mathcal S_{j,k}$};

      \draw[axis] (0,0) -- (\amax+0.35,0)
        node[below] {$k$ for $r\sim2^k$};
      \draw[axis] (0,0) -- (0,\amax+0.35)
        node[left] {$j$ for $R\sim2^j$};
      \node[font=\scriptsize,below] at (\jmin,0) {$2$};
      \node[font=\scriptsize,left] at (0,\jmin) {$2$};

      \node[region label,text=red!65!black] at (2.00,5.45) {$D_4^+$};
      \node[region label,text=blue!65!black] at (5.45,2.00) {$D_4^-$};
      \node[region label,fill=white,fill opacity=0.82,text opacity=1,
        inner sep=2pt] at (3.35,3.00)
        {$|j-k|\leq2$\\[-1pt] one unbounded index};
    \end{scope}
  \end{tikzpicture}
  \caption{Frequency-region decomposition used in Steps~2 and~4 of the proof of Proposition~\ref{prop:source-residue}.  Panel~(a) shows the two nested cones and the high--high, one-radius, and bounded parts of their complement.
  The threshold $8$ only separates the one-radius strips from the bounded core; it is not part of the definition of $D_4^\pm$.  In Panel~(b), if $\mathcal S_{j,k}\subset\Omega_4^c$ is nonempty with $j,k\geq2$, then $|j-k|\leq2$.  Thus, for each $n=\max\{j,k\}$, at most five shell pairs occur, and only one dyadic index is independently unbounded.  Boundary sets may be assigned to either adjacent region.}
  \label{fig:frequency-region-decomposition}
\end{figure}

The corresponding homogeneous leading term is
\[ \mathcal H(\xi,\eta) := \frac8\pi(\omega\cdot\nu)R^{-3}r^{-3}. \]

Apply Lemma~\ref{lem:localized-angular-source} with $\rho=\zeta$, and fix the corresponding $a_\zeta$, $k_\zeta$, and $\mathcal A_\zeta$.  Then
\[ \widehat{\zeta\Gamma_0} = k_\zeta*\mathcal A_\zeta -\frac{16(2-\pi)}{3\pi}\widehat{\zeta Q}. \]
In particular, \eqref{eq:localized-F-symmetry} gives $\mathcal A_\zeta(\xi,\eta)=\mathcal A_\zeta(\eta,\xi)$; this identity justifies every interchange of the two frequency variables below.
Applying \eqref{eq:resolvent-lift-expanded} with $p=1$ and $2\beta=\varepsilon$ to the representation above gives
\[ \|\res(\zeta\Gamma_0)\|_{\B^{2-\varepsilon}} = \int_{\R^6} \ip\Xi^{-\varepsilon} \left| k_\zeta*\mathcal A_\zeta(\Xi) -\frac{16(2-\pi)}{3\pi}\widehat{\zeta Q}(\Xi) \right|\dd\Xi. \]

The reverse triangle inequality, applied separately on \(\Omega_4\) and its complement, gives the global decomposition
\[
\begin{aligned}
  &\left|
    \|\res(\zeta\Gamma_0)\|_{\B^{2-\varepsilon}}
    -
    \int_{\Omega_4}\ip\Xi^{-\varepsilon}|\mathcal H(\Xi)|\dd\Xi
  \right|                                                     \\
  &\quad\leq
  \int_{\R^6}
  \ip\Xi^{-\varepsilon}
  |k_\zeta*\mathcal A_\zeta-\mathcal A_\zeta|(\Xi)\dd\Xi                      \\
  &\qquad+
  \int_{\Omega_4}
  \ip\Xi^{-\varepsilon}
  \left(
    |\mathcal A_\zeta(\Xi)-\mathcal H(\Xi)|
    +\frac{16(\pi-2)}{3\pi}|\widehat{\zeta Q}(\Xi)|
  \right)\dd\Xi                                               \\
  &\qquad+
  \int_{\Omega_4^c}
  \ip\Xi^{-\varepsilon}
  \left(
    |\mathcal A_\zeta(\Xi)|
    +\frac{16(\pi-2)}{3\pi}|\widehat{\zeta Q}(\Xi)|
  \right)\dd\Xi .
\end{aligned}
\]
Thus it remains to establish the following four estimates.
\begin{align}
  \int_{\R^6}\ip\Xi^{-\varepsilon}
  |k_\zeta*\mathcal A_\zeta-\mathcal A_\zeta|(\Xi)\dd\Xi
  &\leq \frac{C_\zeta}{\varepsilon},
  \label{eq:source-cutoff-error}                                       \\
  \int_{\Omega_4}\ip\Xi^{-\varepsilon}
  \bigl(|\mathcal A_\zeta-\mathcal H|+|\widehat{\zeta Q}|\bigr)\dd\Xi
  &\leq \frac{C_\zeta}{\varepsilon},
  \label{eq:source-cone-error}                                         \\
  \int_{\Omega_4^c}\ip\Xi^{-\varepsilon}
  \bigl(|\mathcal A_\zeta|+|\widehat{\zeta Q}|\bigr)\dd\Xi
  &\leq \frac{C_\zeta}{\varepsilon},
  \label{eq:source-complement-error}                                   \\
  \left|
  \int_{\Omega_4}\ip\Xi^{-\varepsilon}|\mathcal H(\Xi)|\dd\Xi
  -\frac{128\pi}{\varepsilon^2}
  \right|
  &\leq \frac{C}{\varepsilon}.
  \label{eq:source-leading-pole}
\end{align}
Step~1 supplies the Fourier representation.  Steps~2--5 prove, respectively, \eqref{eq:source-cone-error}, \eqref{eq:source-cutoff-error}, \eqref{eq:source-complement-error}, and \eqref{eq:source-leading-pole}.

    \emph{Step 1. Shared localized-source input.}
    Lemma~\ref{lem:localized-angular-source} proves the representation above and supplies all localization estimates used below; no profile is relocalized inside this proposition.

    Since $\zeta\Gamma_0\in L^1$, the unitary Fourier normalization gives $\|\widehat{\zeta\Gamma_0}\|_{L^\infty(\R^6)} \leq(2\pi)^{-3}\|\zeta\Gamma_0\|_{L^1(\R^6)}$. Hence, uniformly for $0<\varepsilon\leq1/2$,
    \[ \int_{|\Xi|\leq1}\ip\Xi^{-\varepsilon} |\widehat{\zeta\Gamma_0}(\Xi)|\dd\Xi \leq \frac{|\mathbb S^5|}{6(2\pi)^3} \|\zeta\Gamma_0\|_{L^1(\R^6)}. \]
    Thus all endpoint divergence is a high-frequency phenomenon.

\emph{Step 2. Asymptotics on the two nested cones.}
Set $p=\xi+\eta$ and
\[ e_\zeta(k):=\widehat a_\zeta(k) +2i\sqrt{\frac2\pi}\frac{k}{|k|^4}. \]
On $D_4^+$,
\[ \frac34R\leq|p|\leq\frac54R, \qquad |e_\zeta(p)|+|e_\zeta(\xi)|\leq C_\zeta R^{-4}, \qquad |e_\zeta(-\eta)|\leq C_\zeta r^{-4}. \]
For $k\neq0$,
\[ D\!\left(\frac{k}{|k|^4}\right)h =|k|^{-4}h-4|k|^{-6}(k\cdot h)k, \qquad \left\|D\!\left(\frac{k}{|k|^4}\right)\right\|_{\mathrm{op}} =3|k|^{-4}. \]
Since $|\xi+t\eta|\geq3R/4$ for $0\leq t\leq1$, the mean-value theorem gives
\[ \left| \frac{p}{|p|^4}-\frac{\xi}{R^4} \right| \leq \frac{256}{27}rR^{-4}. \]
Using the definition of $\mathcal A_\zeta$, direct expansion yields
\[
\begin{aligned}
  \mathcal A_\zeta-\mathcal H
  ={}&\frac8\pi
  \left(\frac{p}{|p|^4}-\frac{\xi}{R^4}\right)
  \cdot\frac{\eta}{r^4}
  +\frac8\pi\frac{p\cdot\xi}{|p|^4R^4}                         \\
  &+2i\sqrt{\frac2\pi}\,e_\zeta(p)\cdot
  \left(\frac{\eta}{r^4}+\frac{\xi}{R^4}\right)
  +\widehat a_\zeta(p)\cdot
  \bigl(e_\zeta(-\eta)-e_\zeta(\xi)\bigr).
\end{aligned}
\]
The four terms are bounded, respectively, by
\[ C_\zeta R^{-4}r^{-2},\qquad C_\zeta R^{-6},\qquad C_\zeta(R^{-4}r^{-3}+R^{-7}),\qquad C_\zeta(R^{-3}r^{-4}+R^{-7}). \]
Because $r\leq R$ and $r\geq4$,
\[ R^{-6}\leq R^{-4}r^{-2},\qquad R^{-4}r^{-3}+R^{-7}\leq C R^{-3}r^{-4}. \]
Consequently,
\[ |\mathcal A_\zeta-\mathcal H| \leq C_\zeta\bigl(R^{-4}r^{-2}+R^{-3}r^{-4}\bigr) \qquad\text{on }D_4^+. \]
After multiplication by the polar measure $R^2r^2\dd R\dd r$ and using $\ip\Xi^{-\varepsilon}\leq R^{-\varepsilon}$, the two radial integrals are equal.
\begin{align*}
  \int_{16}^\infty R^{-2-\varepsilon}\int_4^{R/4}\dd r\dd R
  &=\frac{4^{-1-2\varepsilon}}{\varepsilon(1+\varepsilon)},\\
  \int_{16}^\infty R^{-1-\varepsilon}
       \int_4^{R/4}\frac{\dd r}{r^2}\dd R
  &=\frac{4^{-1-2\varepsilon}}{\varepsilon(1+\varepsilon)}.
\end{align*}
Thus the $\mathcal A_\zeta-\mathcal H$ contribution on $D_4^+$ is at most $C_\zeta/\varepsilon$.  Using \eqref{eq:localized-F-symmetry} and interchanging $(\xi,R)$ with $(\eta,r)$ gives
\[ |\mathcal A_\zeta-\mathcal H| \leq C_\zeta\bigl(R^{-2}r^{-4}+R^{-4}r^{-3}\bigr) \qquad\text{on }D_4^-, \]
and the same integral bound.

Finally, \eqref{eq:localized-Q-tail} and $r\geq4$ imply $|\widehat{\zeta Q}(\xi,\eta)|\leq C_\zeta R^{-7}r$ on $D_4^+$.
    Hence its contribution is bounded by
    \[ C_\zeta\int_{16}^\infty R^{-5-\varepsilon} \int_4^{R/4}r^3\dd r\dd R \leq\frac{C_\zeta4^{-4-2\varepsilon}}{4\varepsilon}. \]
    On $D_4^-$ the interchanged estimate is $|\widehat{\zeta Q}(\xi,\eta)|\leq C_\zeta r^{-7}R$, and therefore
    \[ \int_{D_4^-}\ip\Xi^{-\varepsilon} |\widehat{\zeta Q}(\Xi)|\dd\Xi \leq C_\zeta\int_{16}^\infty r^{-5-\varepsilon} \int_4^{r/4}R^3\dd R\dd r \leq\frac{C_\zeta4^{-4-2\varepsilon}}{4\varepsilon}. \]
    Combining the estimates on $D_4^+$ and $D_4^-$ proves \eqref{eq:source-cone-error}.

    \emph{Step 3. Error produced by the physical cut-off.}
    We prove \eqref{eq:source-cutoff-error}.
    Lemma~\ref{lem:localized-angular-source} gives \eqref{eq:localized-F-derivative} with $\rho=\zeta$.  Moreover, $\int_{\R^6}k_\zeta(\Theta)\dd\Theta=1$ and $k_\zeta$ is Schwartz.
    Therefore \eqref{eq:weighted-localization} bounds the left-hand side of \eqref{eq:source-cutoff-error} by
    \[ \frac{2^{\varepsilon/2}C_\zeta}{\varepsilon} \int_{\R^6} |\Theta|\ip\Theta^{1/2}|k_\zeta(\Theta)|\dd\Theta \leq\frac{C_\zeta}{\varepsilon}. \]
    This proves \eqref{eq:source-cutoff-error}.

    \emph{Step 4. The complement has only one unbounded scale.}
    We prove \eqref{eq:source-complement-error}.
    We use the covering
    \[ \Omega_4^c\subset \{R,r\geq4\} \cup\{r<4,\ R\geq8\} \cup\{R<4,\ r\geq8\} \cup\{R,r<8\}. \]
    All constants below are independent of the dyadic indices and of \(0<\varepsilon\leq1/2\).

    Suppose first that \(R,r\geq4\).  Since \(\Xi\notin D_4^+\cup D_4^-\), the definitions of the cones imply
    \[ \frac R4<r<4R. \]
    The region \(R,r\geq4\) is covered by dyadic shells with indices \(j,k\geq2\).  For such integers, define
    \[ \mathcal S_{j,k}:=\Omega_4^c\cap \left\{2^j\leq R<2^{j+1},\quad 2^k\leq r<2^{k+1}\right\}. \]
    If \(\mathcal S_{j,k}\neq\varnothing\), then
    \[ 2^j\leq R<4r<2^{k+3}, \qquad 2^k\leq r<4R<2^{j+3}. \]
    Thus \(j-k<3\) and \(k-j<3\).  Since \(j-k\) is an integer,
    \[ |j-k|\leq2. \]
    In particular, if \(n=\max\{j,k\}\), then
    \[ 2^{n-2}\leq R,r<2^{n+1} \qquad\text{on }\mathcal S_{j,k}. \]

    Fix a nonempty shell \(\mathcal S_{j,k}\), and put \(p=\xi+\eta\) and \(q=\eta\).
    This change of variables has Jacobian one, \(p-q=\xi\), and
    \[ \mathcal A_\zeta(\xi,\eta) =\widehat a_\zeta(p)\cdot \bigl(\widehat a_\zeta(-q)-\widehat a_\zeta(p-q)\bigr). \]
    We split \(\mathcal S_{j,k}\) into the regions \(|p|\leq|q|/2\) and \(|p|>|q|/2\).

    On the first region, for \(0\leq t\leq1\),
    \[ |-q+tp|\geq|q|-t|p|\geq\frac{|q|}{2} \geq2^{n-3}. \]
    The fundamental theorem of calculus and \eqref{eq:general-angular-global} therefore give
    \begin{align*}
      \bigl|\widehat a_\zeta(-q)-\widehat a_\zeta(p-q)\bigr|
      &=\left|\int_0^1
        p\cdot\nabla\widehat a_\zeta(-q+tp)\dd t\right|\\
      &\leq C_\zeta|p|2^{-4n}.
    \end{align*}
    Since \(|\widehat a_\zeta(p)|\leq C_\zeta\ip p^{-3}\), enlarging the transformed integration domain gives
    \begin{align*}
      &\int_{\mathcal S_{j,k}\cap\{|p|\leq|q|/2\}}
      |\mathcal A_\zeta(\xi,\eta)|\dd\xi\dd\eta\\
      &\quad\leq
      C_\zeta2^{-4n}
      \int_{2^{n-2}\leq|q|<2^{n+1}}\dd q
      \int_{|p|\leq2^n}|p|\ip p^{-3}\dd p\\
      &\quad\leq
      C_\zeta2^{-4n}2^{3n}
      \int_0^{2^n}\frac{t^3}{(1+t^2)^{3/2}}\dd t
      \leq C_\zeta.
    \end{align*}
    The last inequality follows from \(t^3(1+t^2)^{-3/2}\leq1\), so the radial integral is at most \(2^n\).  This difference estimate is the cancellation needed near the frequency hyperplane \(p=0\).

    On the remaining region, \(|q|\) and \(|p-q|\) both lie between \(2^{n-2}\) and \(2^{n+1}\).  Moreover,
    \[ |p|>\frac{|q|}{2}\geq2^{n-3}, \qquad |p|\leq|q|+|p-q|<2^{n+2}. \]
    Thus \(|p|\asymp|q|\asymp|p-q|\asymp2^n\), with absolute comparison constants, and the global decay of \(\widehat a_\zeta\) gives
    \[ |\mathcal A_\zeta(\xi,\eta)|\leq C_\zeta2^{-6n}. \]
    The shell \(\mathcal S_{j,k}\) is contained in a product of two three-dimensional annuli.  Hence
    \[ |\mathcal S_{j,k}| \leq \left(\frac{4\pi}{3}2^{3(j+1)}\right) \left(\frac{4\pi}{3}2^{3(k+1)}\right) \leq C2^{6n}. \]
    It follows that
    \[ \int_{\mathcal S_{j,k}\cap\{|p|>|q|/2\}} |\mathcal A_\zeta(\xi,\eta)|\dd\xi\dd\eta \leq C_\zeta. \]
    Combining the two regions, we obtain
    \[ \int_{\mathcal S_{j,k}}|\mathcal A_\zeta(\Xi)|\dd\Xi \leq C_\zeta. \]

    The \(Q\)-term satisfies the same shell bound.  Indeed, \eqref{eq:localized-Q-tail} and \(R,r,|\Xi|\asymp2^n\) imply
    \begin{align*}
      |\widehat{\zeta Q}(\xi,\eta)|
      &\leq48\frac{Rr}{(R^2+r^2)^4}+C_\zeta|\Xi|^{-7}\\
      &\leq C_\zeta2^{-6n}.
    \end{align*}
    Therefore
    \[ \int_{\mathcal S_{j,k}}|\widehat{\zeta Q}(\Xi)|\dd\Xi \leq C_\zeta. \]

    For each fixed \(n\), at most five pairs \((j,k)\) satisfy \(\max\{j,k\}=n\) and \(|j-k|\leq2\).  Since \(\ip\Xi^{-\varepsilon}\leq2^{-n\varepsilon}\) on every such shell, summation over the high--high part of the complement gives
    \begin{align*}
      &\int_{\Omega_4^c\cap\{R,r\geq4\}}
      \ip\Xi^{-\varepsilon}
      \bigl(|\mathcal A_\zeta(\Xi)|+|\widehat{\zeta Q}(\Xi)|\bigr)\dd\Xi\\
      &\qquad\leq
      5C_\zeta
      \sum_{n=2}^\infty2^{-n\varepsilon}
      \leq\frac{C_\zeta}{\varepsilon}.
    \end{align*}
    Here we used \(1-2^{-\varepsilon}\geq(\varepsilon\log2)/2\) for \(0<\varepsilon\leq1/2\).

    We next treat a region in which only one radius is unbounded.  Suppose \(r<4\) and \(R\geq8\).  Then
    \[ |\xi+\eta|\geq R-r\geq\frac R2. \]
    Since \(\widehat a_\zeta\) is bounded and \(|\widehat a_\zeta(k)|\leq C_\zeta\ip k^{-3}\), the definition of \(\mathcal A_\zeta\) gives
    \[ |\mathcal A_\zeta(\xi,\eta)| \leq C_\zeta R^{-3} \bigl(1+R^{-3}\bigr) \leq C_\zeta R^{-3}. \]
    Moreover, \eqref{eq:localized-Q-tail}, \(r<4\), and \(R\geq8\) imply
    \[ |\widehat{\zeta Q}(\xi,\eta)| \leq48\frac{Rr}{(R^2+r^2)^4}+C_\zeta|\Xi|^{-7} \leq192R^{-7}+C_\zeta R^{-7} \leq C_\zeta R^{-7}. \]
    Since \(\ip\Xi^{-\varepsilon}\leq R^{-\varepsilon}\), polar coordinates yield
    \begin{align*}
      &\int_{\substack{r<4\\R\geq8}}
      \ip\Xi^{-\varepsilon}
      \bigl(|\mathcal A_\zeta(\Xi)|+|\widehat{\zeta Q}(\Xi)|\bigr)\dd\Xi\\
      &\qquad\leq C_\zeta
      \int_0^4 r^2\dd r
      \int_8^\infty
      \bigl(R^{-1-\varepsilon}+R^{-5-\varepsilon}\bigr)\dd R\\
      &\qquad=
      \frac{64C_\zeta}{3}
      \left(
        \frac{8^{-\varepsilon}}{\varepsilon}
        +\frac{8^{-4-\varepsilon}}{4+\varepsilon}
      \right)
      \leq\frac{C_\zeta}{\varepsilon}.
    \end{align*}
    By \eqref{eq:localized-F-symmetry}, the region \(R<4\), \(r\geq8\) is identical after interchanging the frequency variables.

    Finally, consider the bounded region \(R,r<8\).  Since \(a_\zeta\in L^1(\R^3)\) and \(\zeta Q\in L^1(\R^6)\), their Fourier transforms are bounded.
    The definition of \(\mathcal A_\zeta\) therefore gives
    \[ |\mathcal A_\zeta(\Xi)|+|\widehat{\zeta Q}(\Xi)| \leq2\|\widehat a_\zeta\|_{L^\infty(\R^3)}^2 +\|\widehat{\zeta Q}\|_{L^\infty(\R^6)} \leq C_\zeta. \]
    Moreover, \(\ip\Xi^{-\varepsilon}\leq1\), and
    \[ \bigl|\{(\xi,\eta):R,r<8\}\bigr| =\left(\frac{4\pi}{3}8^3\right)^2. \]
    Hence
    \[
      \int_{\substack{R<8\\r<8}}
      \ip\Xi^{-\varepsilon}
      \bigl(|\mathcal A_\zeta(\Xi)|+|\widehat{\zeta Q}(\Xi)|\bigr)\dd\Xi
      \leq C_\zeta.
    \]

    Adding the four bounds in the displayed covering proves \eqref{eq:source-complement-error}.  Thus the complement contributes at most a simple pole.

    \emph{Step 5. Exact evaluation of the double pole.}
    The remaining task is to prove \eqref{eq:source-leading-pole}.
    Indeed, the global decomposition and Steps~1--4 give
    \[ \left| \|\res(\zeta\Gamma_0)\|_{\B^{2-\varepsilon}} -\int_{\Omega_4}\ip\Xi^{-\varepsilon}|\mathcal H(\Xi)|\dd\Xi \right| \leq\frac{C_\zeta}{\varepsilon}. \]
    Hence \eqref{eq:source-leading-pole} implies \eqref{eq:source-residue}.  It remains to evaluate the two cone integrals.
    On $D_4^+$, write
    \[ \ip\Xi^{-\varepsilon} =R^{-\varepsilon} \left(1+\frac{r^2+1}{R^2}\right)^{-\varepsilon/2}. \]
    The mean-value theorem applied to $s\mapsto(1+s)^{-\varepsilon/2}$ gives
    \[ 0\leq R^{-\varepsilon}-\ip\Xi^{-\varepsilon} \leq\frac{\varepsilon}{2}R^{-\varepsilon} \left(\frac{r^2+1}{R^2}\right). \]
    Since
    \[ \int_{\mathbb S^2}\int_{\mathbb S^2}|\omega\cdot\nu| \dd\omega\dd\nu=8\pi^2, \]
    the resulting weight error satisfies
    \begin{align*}
      &\int_{D_4^+}
      \bigl(R^{-\varepsilon}-\ip\Xi^{-\varepsilon}\bigr)
      |\mathcal H(\Xi)|\dd\Xi                                           \\
      &\quad\leq64\pi\frac{\varepsilon}{2}
      \int_{16}^\infty R^{-3-\varepsilon}
      \int_4^{R/4}\left(r+\frac1r\right)\dd r\dd R             \\
      &\quad\leq64\pi\left[
      \frac{4^{-2-2\varepsilon}}4
      +\frac{\varepsilon4^{-4-2\varepsilon}}
      {2(2+\varepsilon)^2}
      \right]
      \leq C.
    \end{align*}
    The homogeneous radial integral is exactly
    \begin{align*}
      \int_{16}^\infty R^{-1-\varepsilon}
      \int_4^{R/4}\frac{\dd r}{r}\dd R
       & =\int_{16}^\infty R^{-1-\varepsilon}
      \log\frac{R}{16}\dd R                       \\
       & =\frac{4^{-2\varepsilon}}{\varepsilon^2}.
    \end{align*}
    Hence
    \[ \int_{D_4^+}R^{-\varepsilon}|\mathcal H(\Xi)|\dd\Xi =\frac{64\pi\,4^{-2\varepsilon}}{\varepsilon^2}. \]
    The same two formulas hold on $D_4^-$, with $R$ and $r$ interchanged.  Therefore
    \[ \left| \int_{\Omega_4}\ip\Xi^{-\varepsilon}|\mathcal H(\Xi)|\dd\Xi -\frac{128\pi\,4^{-2\varepsilon}}{\varepsilon^2} \right|\leq C. \]
    Finally,
    \[ 1-4^{-2\varepsilon} =2\varepsilon\log4\int_0^1 4^{-2t\varepsilon}\dd t. \]
    Consequently,
    \[ \left| \frac{128\pi\,4^{-2\varepsilon}}{\varepsilon^2} -\frac{128\pi}{\varepsilon^2} \right| \leq\frac{256\pi\log4}{\varepsilon}. \]
    This proves \eqref{eq:source-leading-pole} and hence \eqref{eq:source-residue}.
  \end{proof}

\subsection{The local conjugated equation}

This subsection identifies the critical coefficient in the local conjugated equation. We compute $|\nabla F|^2$ and the distributional Laplacian of $P$, verify that no distribution supported at the triple coalescence is omitted, and combine their nonconstant parts into $-(Z/4)\Gamma_0$. The resulting identity \eqref{eq:local-conjugated} separates this source from the drift and all lower-order terms. It provides the exact decomposition used in the next two subsections to preserve the double-pole term and bound every remainder by $C/\varepsilon$.

Choose $\vartheta\in C_c^\infty(\R^6)$ equal to one near $(0,0)$ and supported where all radial cut-offs equal one. On this support,
\begin{equation*} F=-\frac Z2(|x|+|y|)+\frac14|x-y|, \qquad P=\frac{(2-\pi)Z}{12\pi}(x\cdot y) \log(|x|^2+|y|^2). \end{equation*}
The gradients of the two-particle factor are
\begin{equation*} \nabla_xF=-\frac Z2\frac{x}{|x|} +\frac14\frac{x-y}{|x-y|}, \qquad \nabla_yF=-\frac Z2\frac{y}{|y|} -\frac14\frac{x-y}{|x-y|}. \end{equation*}
Squaring both identities and using that each displayed unit vector has length one almost everywhere gives
\begin{align*}
  |\nabla_xF|^2
   & =\frac{Z^2}{4}+\frac1{16}
  -\frac Z4\frac{x}{|x|}\cdot\frac{x-y}{|x-y|},   \\
  |\nabla_yF|^2
   & =\frac{Z^2}{4}+\frac1{16}
  +\frac Z4\frac{y}{|y|}\cdot\frac{x-y}{|x-y|}.
\end{align*}
Hence
\begin{equation*} |\nabla F|^2 =\frac{Z^2}{2}+\frac18-\frac Z4\Gamma. \end{equation*}

It remains to check the numerical coefficient supplied by $P$.  Direct differentiation gives
\[ \nabla_x\log(|x|^2+|y|^2)=\frac{2x}{|x|^2+|y|^2}, \qquad \Delta_x\log(|x|^2+|y|^2) =\frac6{|x|^2+|y|^2} -\frac{4|x|^2}{(|x|^2+|y|^2)^2}, \]
the product rule gives
\[ \Delta_x\bigl[(x\cdot y)\log(|x|^2+|y|^2)\bigr] =\frac{10x\cdot y}{|x|^2+|y|^2} -\frac{4|x|^2(x\cdot y)}{(|x|^2+|y|^2)^2}. \]
Adding the formula with $x$ and $y$ interchanged yields
\begin{equation*} (\Delta_x+\Delta_y) \bigl[(x\cdot y)\log(|x|^2+|y|^2)\bigr] =16\frac{x\cdot y}{|x|^2+|y|^2}=16Q. \end{equation*}
This identity also holds in $\mathcal D'(\R^6)$.  To verify that no distribution supported at the origin is missed, set $h(X)=(x\cdot y)\log|X|^2$, where $X=(x,y)$.  For $0<|X|<1$ and $|\alpha|\leq2$, direct differentiation gives
\[ |\partial^\alpha h(X)| \leq C_\alpha |X|^{2-|\alpha|} \bigl(1+|\log|X||\bigr). \]
In particular,
\[ \int_{|X|<1}|D^2h(X)|\dd X \leq C\int_0^1 r^5(1+|\log r|)\dd r<\infty. \]
Moreover, when the distributional derivatives are computed on $\{|X|>\delta\}$, the first and second integrations by parts produce boundary terms bounded, respectively, by $C\delta^7(1+|\log\delta|)$ and $C\delta^6(1+|\log\delta|)$; both tend to zero as $\delta\downarrow0$.
Thus $h\in W^{2,1}_{\mathrm{loc}}(\R^6)$, its weak second derivatives agree with the displayed classical derivatives away from the origin, and no Dirac mass occurs.
Consequently, $\Delta P=4(2-\pi)ZQ/(3\pi)$.  Combining this identity with $|\nabla F|^2$ gives
\begin{equation}\label{eq:critical-group-expanded} \Delta P+|\nabla F|^2 =\frac{Z^2}{2}+\frac18-\frac Z4\Gamma_0. \end{equation}

Applying the conjugation identity used in \eqref{eq:phi3-equation}, using $\Delta F=V$ locally, and inserting \eqref{eq:critical-group-expanded} gives
\begin{equation}\label{eq:local-conjugated} (-\Delta+1)u =2\nabla(F+P)\cdot\nabla u-\frac Z4\Gamma_0u+S_{\rm loc}u, \end{equation}
where
\begin{equation}\label{eq:local-zero-order} S_{\rm loc}=E+1+\frac{Z^2}{2}+\frac18 +2\nabla F\cdot\nabla P+|\nabla P|^2. \end{equation}
The critical term therefore has the fixed sign $-Z\Gamma_0u/4$.
The combination in \eqref{eq:critical-group-expanded}, rather than $\Gamma$ and $Q$ separately, must be analyzed before Fourier absolute values are taken.

\subsection{Freezing the quotient at the triple coalescence}

The purpose of this subsection is to replace $u$ in the critical source by its value $u(0,0)$ without altering the coefficient of the double pole. We first prove that one frequency derivative of $\widehat{\vartheta\Gamma_0}$ has weighted $L^1$ norm bounded by $C/\varepsilon$. The mean-value formula converts this derivative estimate into a translation-difference bound. Fourier inversion then represents $\vartheta\Gamma_0(u-u(0,0))$ by precisely this difference, yielding \eqref{eq:frozen-coefficient}. Thus the variable part of $u$ contributes only a simple pole.

\begin{lemma}[Derivative and translation-difference estimates]
  There is $C_{\rm der}<\infty$ such that, for $0<\varepsilon\leq1/2$ and $\Theta\in\R^6$,
  \begin{align}
    \int_{\R^6}\ip\Xi^{-\varepsilon}
    |\nabla\widehat{\vartheta\Gamma_0}(\Xi)|\dd\Xi
     & \leq\frac{C_{\rm der}}{\varepsilon},
     \label{eq:critical-source-derivative} \\
    \int_{\R^6}\ip\Xi^{-\varepsilon}
    |\widehat{\vartheta\Gamma_0}(\Xi-\Theta)
      -\widehat{\vartheta\Gamma_0}(\Xi)|\dd\Xi
     & \leq\frac{2^{1/4}C_{\rm der}}{\varepsilon}
    |\Theta|\ip\Theta^\varepsilon.                       \label{eq:critical-source-difference}
  \end{align}
\end{lemma}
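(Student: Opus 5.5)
The plan is to derive \eqref{eq:critical-source-derivative} directly from Lemma~\ref{lem:localized-angular-source} with $\rho=\vartheta$, and then to obtain \eqref{eq:critical-source-difference} by inserting it into the weighted translation estimate \eqref{eq:weighted-translation}.

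\emph{Step 1: Fourier representation.} By \eqref{eq:localized-source-representation} with $\rho=\vartheta$,
\[
\widehat{\vartheta\Gamma_0}=k_\vartheta*\mathcal A_\vartheta-\frac{16(2-\pi)}{3\pi}\widehat{\vartheta Q}.
\]
Differentiation commutes with convolution against the Schwartz kernel $k_\vartheta$, so
\[
\nabla\widehat{\vartheta\Gamma_0}=k_\vartheta*\nabla\mathcal A_\vartheta-\frac{16(2-\pi)}{3\pi}\nabla\widehat{\vartheta Q}.
\]
For this identity I will first check that $\mathcal A_\vartheta$ is locally Lipschitz. This follows from \eqref{eq:general-angular-global} for $\theta=\chi_\vartheta$, which bounds $\widehat a_\vartheta$ and its gradient. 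Hence $\nabla\mathcal A_\vartheta$ is a genuine function, and $k_\vartheta*\nabla\mathcal A_\vartheta=\nabla(k_\vartheta*\mathcal A_\vartheta)$ pointwise.

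\emph{Step 2: weighted $L^1$ bounds.} For the convolution term I apply \eqref{eq:weighted-convolution} with $k=k_\vartheta$ and $G=\nabla\mathcal A_\vartheta$. Since $k_\vartheta$ is Schwartz, $\|\ip{\cdot}^{\varepsilon}k_\vartheta\|_{L^1}\le\|\ip{\cdot}^{1/2}k_\vartheta\|_{L^1}$ is bounded uniformly in $\varepsilon$. Combined with \eqref{eq:localized-F-derivative}, this gives a bound $C/\varepsilon$. For the $Q$ term, \eqref{eq:localized-Q-gradient} gives $|\nabla\widehat{\vartheta Q}|\le C\ip\Xi^{-7}$, which is integrable on $\R^6$, so this term contributes an $\varepsilon$-independent constant. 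Summing the two contributions and using $1\le 1/(2\varepsilon)$ for $\varepsilon\le1/2$ yields \eqref{eq:critical-source-derivative}.

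\emph{Step 3: translation difference.} Set $G=\widehat{\vartheta\Gamma_0}$. This function is $W^{1,1}_{\mathrm{loc}}$; indeed it is bounded and Lipschitz, because $X\,\vartheta\Gamma_0\in L^1$. By Step~2, $\ip{\cdot}^{-\varepsilon}\nabla G\in L^1$. Then \eqref{eq:weighted-translation} gives the factor $2^{\varepsilon/2}\le2^{1/4}$ times the derivative bound, which is exactly \eqref{eq:critical-source-difference}.

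The main obstacle is \eqref{eq:localized-F-derivative}, and that estimate is already established. The only remaining care is the regularity needed to differentiate under the convolution and to apply the fundamental theorem of calculus almost everywhere. Both follow from the global gradient bounds in \eqref{eq:general-angular-global}, which make $\mathcal A_\vartheta$ a $C^1$ function away from the null set where it is not smooth; in fact it is globally Lipschitz on bounded sets.
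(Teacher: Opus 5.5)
Your proposal is correct and matches the paper's proof: differentiate \eqref{eq:localized-source-representation} with $\rho=\vartheta$, bound the convolution term by \eqref{eq:weighted-convolution} combined with \eqref{eq:localized-F-derivative}, bound the $Q$ term by \eqref{eq:localized-Q-gradient}, and then apply \eqref{eq:weighted-translation} with $2^{\varepsilon/2}\leq2^{1/4}$. Your extra regularity remarks are sound but understate the situation: since $a_\vartheta$ is a compactly supported $L^1$ function, $\widehat a_\vartheta$, and hence $\mathcal A_\vartheta$, is smooth everywhere, so there is no exceptional null set.
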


\begin{proof}
  Apply Lemma~\ref{lem:localized-angular-source} with $\rho=\vartheta$.
  Differentiating \eqref{eq:localized-source-representation}, using \eqref{eq:weighted-convolution} for the convolution term, and then applying \eqref{eq:localized-F-derivative} and \eqref{eq:localized-Q-gradient}, we obtain
  \begin{align*}
    &\int_{\R^6}\ip\Xi^{-\varepsilon}
      |\nabla\widehat{\vartheta\Gamma_0}(\Xi)|\dd\Xi\\
    &\quad\leq
      \frac{2^{1/4}C_\vartheta}{\varepsilon}
      \|\ip{\cdot}^{1/2}k_\vartheta\|_{L^1(\R^6)}
      +C_\vartheta\int_{\R^6}\ip\Xi^{-7}\dd\Xi
      \leq\frac{C_{\rm der}}{\varepsilon}.
  \end{align*}
  The two moments are finite because $k_\vartheta$ is Schwartz and $\ip{\cdot}^{-7}\in L^1(\R^6)$.  This proves \eqref{eq:critical-source-derivative}.  Applying \eqref{eq:weighted-translation} to $G=\widehat{\vartheta\Gamma_0}$ and using $2^{\varepsilon/2}\leq2^{1/4}$ proves \eqref{eq:critical-source-difference}.
\end{proof}

\begin{lemma}[Freezing at the triple coalescence]\label{lem:frozen-coefficient}
  For $f\in\B^{3/2}(\R^6)$ and $0<\varepsilon\leq1/2$,
  \begin{equation}\label{eq:frozen-coefficient} \|\res(\vartheta\Gamma_0(f-f(0)))\|_{\B^{2-\varepsilon}} \leq\frac{2^{1/4}C_{\rm der}}{(2\pi)^3\varepsilon} \|f\|_{\B^{3/2}}. \end{equation}
\end{lemma}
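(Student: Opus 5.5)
The proof has three steps: lift through the resolvent, write the product as a Fourier convolution in which the frozen value $f(0)$ appears as an average of $\widehat f$, and then apply the translation-difference estimate \eqref{eq:critical-source-difference}.

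\emph{Step 1: resolvent lift.} By the resolvent identity \eqref{eq:resolvent-lift-expanded} with $p=1$ and $2\beta=\varepsilon$,
\[
\|\res(\vartheta\Gamma_0(f-f(0)))\|_{\B^{2-\varepsilon}}
=\int_{\R^6}\ip\Xi^{-\varepsilon}\,\bigl|\widehat{\vartheta\Gamma_0 f}(\Xi)-f(0)\widehat{\vartheta\Gamma_0}(\Xi)\bigr|\dd\Xi .
\]
So it suffices to bound this weighted $L^1$ norm.

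\emph{Step 2: Fourier representation.} Since $f\in\B^{3/2}\subset\B^0$, we have $\widehat f\in L^1(\R^6)$. Hence $f$ is continuous and, by Fourier inversion,
\[
f(0)=(2\pi)^{-3}\int_{\R^6}\widehat f(\Theta)\dd\Theta .
\]
Because $\vartheta\Gamma_0\in L^1\cap L^\infty$, the unitary product formula gives
\[
\widehat{\vartheta\Gamma_0 f}(\Xi)=(2\pi)^{-3}\int_{\R^6}\widehat{\vartheta\Gamma_0}(\Xi-\Theta)\,\widehat f(\Theta)\dd\Theta .
\]
This formula is first justified for Schwartz $f$ and then extended to $f$ with $\widehat f\in L^1$ by density, since $\widehat{\vartheta\Gamma_0}$ is bounded. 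Subtracting the two expressions yields the exact identity
\[
\widehat{\vartheta\Gamma_0(f-f(0))}(\Xi)
=(2\pi)^{-3}\int_{\R^6}\bigl\{\widehat{\vartheta\Gamma_0}(\Xi-\Theta)-\widehat{\vartheta\Gamma_0}(\Xi)\bigr\}\widehat f(\Theta)\dd\Theta .
\]

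\emph{Step 3: the estimate.} We multiply by $\ip\Xi^{-\varepsilon}$, integrate in $\Xi$, and use Tonelli. The inner integral in $\Xi$ is bounded by \eqref{eq:critical-source-difference}, which gives at most
\[
\frac{2^{1/4}C_{\rm der}}{\varepsilon}\,|\Theta|\ip\Theta^{\varepsilon}.
\]
Since $\varepsilon\leq1/2$, we have $|\Theta|\ip\Theta^\varepsilon\leq\ip\Theta^{3/2}$. Therefore the remaining $\Theta$-integral is at most
\[
\int_{\R^6}\ip\Theta^{3/2}|\widehat f(\Theta)|\dd\Theta=\|f\|_{\B^{3/2}},
\]
and \eqref{eq:frozen-coefficient} follows with exactly the stated constant.

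The only delicate point is justifying the convolution identity and the interchange of integrals for a non-Schwartz $f$. Tonelli handles the interchange because all integrands are nonnegative and the final bound is finite. For the convolution identity, I would approximate $f$ by Schwartz $f_j$ with $f_j\to f$ in $\B^{3/2}$. Then $f_j(0)\to f(0)$, and both sides converge in weighted $L^1$ by the same difference estimate.
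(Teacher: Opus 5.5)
Your proposal is correct and follows the paper's proof essentially step for step: the resolvent lift with $p=1$ and $2\beta=\varepsilon$, Fourier inversion for $f(0)$, the kernel-difference identity from the unitary product formula, Tonelli together with \eqref{eq:critical-source-difference}, and the bound $|\Theta|\ip\Theta^\varepsilon\leq\ip\Theta^{3/2}$. Your density justification of the product formula for non-Schwartz $f$ spells out a step the paper leaves implicit; it can also be done directly by Fubini, since $\vartheta\Gamma_0\in L^1$ and $\widehat f\in L^1$.
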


\begin{proof}
  Since $f\in\B^{3/2}\subset\B^0$, Fourier inversion converges absolutely and
  \[ f(0)=(2\pi)^{-3}\int_{\R^6}\widehat f(\Theta)\dd\Theta. \]
  The unitary product formula therefore gives the exact kernel-difference identity
  \[ \widehat{\vartheta\Gamma_0(f-f(0))}(\Xi) =(2\pi)^{-3}\int_{\R^6} [\widehat{\vartheta\Gamma_0}(\Xi-\Theta) -\widehat{\vartheta\Gamma_0}(\Xi)] \widehat f(\Theta)\dd\Theta. \]
  Applying \eqref{eq:resolvent-lift-expanded} with $p=1$ and $2\beta=\varepsilon$, followed by Tonelli's theorem and \eqref{eq:critical-source-difference}, gives
  \begin{align*}
    \|\res(\vartheta\Gamma_0(f-f(0)))\|_{\B^{2-\varepsilon}}
     & \leq\frac{2^{1/4}C_{\rm der}}{(2\pi)^3\varepsilon}
    \int_{\R^6}|\Theta|\ip\Theta^\varepsilon
    |\widehat f(\Theta)|\dd\Theta                 \\
     & \leq\frac{2^{1/4}C_{\rm der}}{(2\pi)^3\varepsilon}
    \|f\|_{\B^{3/2}},
  \end{align*}
  because $|\Theta|\ip\Theta^\varepsilon\leq\ip\Theta^{3/2}$ for $\varepsilon\leq1/2$.
\end{proof}

\subsection{All remaining terms have at most one pole}

This subsection proves the two simple-pole estimates in Lemma~\ref{lem:one-pole-remainder}. In the local region, the coefficient identity \eqref{eq:local-zero-order} leaves at most one angular block in every remainder. In the exterior region, a smooth partition of each product associated with two distinct collision coordinates places one angular block away from its singularity, so again only one critical scale remains. The structured Wiener and low-rank multiplier estimates, followed by the resolvent identity, give \eqref{eq:local-remainder} and \eqref{eq:global-remainder}. Together with the freezing estimate, these bounds show that every term other than the frozen source is $O(\varepsilon^{-1})$ in the final norm comparison.

\begin{lemma}[Local and exterior remainders]\label{lem:one-pole-remainder}
  There are finite constants $C_{\rm loc}$ and $C_{\rm ext}$, independent of $0<\varepsilon\leq1/2$, such that
  \begin{align}
    \|\res(2\vartheta\nabla(F+P)\cdot\nabla u+
    \vartheta S_{\rm loc}u+[-\Delta,\vartheta]u)\|_{\B^{2-\varepsilon}}
     & \leq\frac{C_{\rm loc}}{\varepsilon}\|u\|_{\B^{3/2}},
    \label{eq:local-remainder}                              \\
    \|(1-\vartheta)u\|_{\B^{2-\varepsilon}}
     & \leq\frac{C_{\rm ext}}{\varepsilon}\|u\|_{\B^{3/2}}.
    \label{eq:global-remainder}
  \end{align}
\end{lemma}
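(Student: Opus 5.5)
For \eqref{eq:local-remainder} we combine the explicit local formulas with the pole bookkeeping of Section~\ref{sec:quantitative}, and we apply \eqref{eq:resolvent-lift-expanded} with $p=1$ and $2\beta=\varepsilon$ at the end. It therefore suffices to bound each source term in $\FL^1_{-\varepsilon}$ by $C\varepsilon^{-1}\|u\|_{\B^{3/2}}$. We split the terms as follows.

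\emph{Drift.} Write $\vartheta\nabla F$ as a finite sum of angular blocks $\vartheta\, b(\gamma(x))$ with $\gamma\in\{x,y,x-y\}$. The factor $\vartheta$ is a Schwartz Wiener factor, and Lemma~\ref{lem:structured-wiener} together with \eqref{eq:m-operator-expanded} gives a bound $B_*\varepsilon^{-1}\|\nabla u\|_{\B^0}$. This is at most $C\varepsilon^{-1}\|u\|_{\B^{3/2}}$. For $\vartheta\nabla P$, \eqref{eq:q7} shows it is a Wiener profile, so this term has no pole at all.

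\emph{Zeroth-order terms.} In $S_{\rm loc}$, the terms $2\nabla F\cdot\nabla P$ contain one angular block times a Wiener block. As noted after \eqref{eq:wiener-profile-expanded}, such a product inherits only a simple pole. The terms $|\nabla P|^2$ and the constants are Wiener. The key point is that \eqref{eq:critical-group-expanded} has already absorbed the only double-pole product $|\nabla F|^2$ into $-\frac Z4\Gamma_0$, so no product of two independent angular blocks survives in $S_{\rm loc}$.

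\emph{Commutator.} We have $[-\Delta,\vartheta]u=-(\Delta\vartheta)u-2\nabla\vartheta\cdot\nabla u$. This has Schwartz coefficients and is bounded in $\FL^1_0$ by $C\|u\|_{\B^1}$ with no pole.

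For \eqref{eq:global-remainder} we use the identity $(-\Delta+1)((1-\vartheta)u)=(1-\vartheta)\,\mathrm{RHS}-[-\Delta,\vartheta]u$, where $\mathrm{RHS}$ is the right-hand side of \eqref{eq:phi3-equation}, specialised to $N=2$, $L=1$. The commutator and the Wiener parts are harmless, and single angular blocks and $\Delta P$ give simple poles as in Section~\ref{sec:quantitative}. The delicate terms are the products $b(\gamma(x))\cdot b(\gamma'(x))$ from $|\nabla F|^2$ with $\gamma\ne\gamma'$, which are of rank six and carry $\varepsilon^{-2}$ in $P_*$.

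On $\supp(1-\vartheta)$ the three collision sets $\{x=0\}$, $\{y=0\}$ and $\{x=y\}$ do not meet simultaneously, since their common intersection is the origin. We therefore choose a smooth partition of unity $1-\vartheta=\sum_k\kappa_k$ such that on each $\supp\kappa_k$ at least one of $|x|$, $|y|$, $|x-y|$ is bounded below, at least in a bounded region. Far away, where $|X|$ is large, the cut-off $\chi$ makes all blocks vanish except those that reduce to rank three.

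On each piece, the angular factor $b(\gamma)$ whose variable stays away from zero is smooth there, and $\kappa_k b(\gamma)$ can be written as a Wiener factor of the form $\tilde\kappa\, b(\gamma)$, with $\tilde\kappa$ a structured smooth cut-off in the collision coordinate. The product then has one Wiener factor and one angular block, hence a simple pole by Lemma~\ref{lem:structured-wiener} and \eqref{eq:m-operator-expanded}.

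The main obstacle is making this partition compatible with the structured low-rank form. The pieces $\kappa_k$ must be functions of the collision coordinate being localised, e.g.\ $\kappa(x-y)$, and not arbitrary functions on $\R^6$, so that Lemma~\ref{lem:structured-wiener} applies.

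We handle this with cut-offs of the form $1-\chi_\delta(|x|)$, $1-\chi_\delta(|y|)$ and $1-\chi_\delta(|x-y|)$. A point where none of these vanishes identically lies in a small neighbourhood of the origin, and there $\vartheta=1$. Expanding $(1-\vartheta)$ times the product of three local indicators $\chi_\delta$ therefore leaves only terms containing at least one factor $(1-\chi_\delta)$ in some collision coordinate. That factor, multiplied with the corresponding $b$, is a smooth bounded function with Wiener transform, because $b$ is smooth away from $0$ and constant-free at infinity after the cut-off. This reduces each term to the single-pole case and yields $C_{\rm ext}$.
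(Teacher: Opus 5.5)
\paragraph{Local estimate.} Your local estimate is the paper's argument. Every term of $2\vartheta\nabla(F+P)\cdot\nabla u+\vartheta S_{\rm loc}u+[-\Delta,\vartheta]u$ carries at most one angular block, because \eqref{eq:critical-group-expanded} has already moved the only two-block product into $-\frac Z4\Gamma_0$. The resolvent lift then finishes.

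\paragraph{Exterior estimate: a different route.} Here you take a genuinely different route from the paper.
\begin{itemize}
\item \emph{The paper} treats each pair of distinct collision coordinates as follows. It builds a partition $\pi_1+\pi_2=1$ from arbitrary smooth functions on $\R^6$ and changes variables to $(z,w)=(L_1X,L_2X)$. It then absorbs $1-\vartheta$, $\pi_j$ and the regularized angular factor into one $g_j\in C_c^\infty(\R^6)$. Finally it proves $\|\widehat{g_jb}\|_{L^{r_\varepsilon}(\R^6)}\leq C/\varepsilon$ by a fibrewise Young inequality and applies Lemma~\ref{lem:multiplier} with $m=6$.
\item \emph{You} instead regularize one block by a cut-off in its own collision coordinate. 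Then $(1-\chi_\delta(|z|))b(z)\in C_c^\infty(\R^3)$ is a rank-three Wiener factor. The surviving block is controlled by the existing constant $\mathfrak m_\varepsilon(b_j)\leq B_*/\varepsilon$.
\end{itemize}
Your version needs no new Fourier computation and stays inside the low-rank framework. The paper's version gives one explicit six-dimensional block per piece, at the price of a separate $L^{r_\varepsilon}$ estimate.

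The obstacle that motivated your choice is not real, however. Since $d=6$, any $C_c^\infty(\R^6)$ function is a Wiener factor with $\mathsf M=\mathrm{Id}$ in Lemma~\ref{lem:structured-wiener}. You already use this for $\vartheta$ in the local part, and you implicitly need it for $1-\vartheta$ (acting as $f\mapsto f-\vartheta f$) in the exterior part.

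\paragraph{A point to tighten.} The geometric input you cite, that $\{x=0\}$, $\{y=0\}$, $\{x=y\}$ have common intersection $\{0\}$, is not what the argument needs. For a product $b(\gamma)\cdot b(\gamma')$ you need the pairwise fact $\{\gamma=0\}\cap\{\gamma'=0\}=\{0\}$, i.e.\ that $X\mapsto(\gamma,\gamma')$ is invertible.

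In your threefold expansion, consider a term whose only factor of type $1-\chi_\delta$ sits in the third coordinate. That term regularizes neither $b(\gamma)$ nor $b(\gamma')$, so ``the corresponding $b$'' does not exist. It vanishes only because $\chi_\delta(|\gamma|)\chi_\delta(|\gamma'|)$ already confines it to $|X|\lesssim\delta$, where $\vartheta=1$. That is exactly the pairwise statement.

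The cleanest fix is to expand pair by pair,
\[
1=(1-\chi_\delta(|\gamma|))+\chi_\delta(|\gamma|)\bigl(1-\chi_\delta(|\gamma'|)\bigr)+\chi_\delta(|\gamma|)\chi_\delta(|\gamma'|),
\]
and observe that the last term is annihilated by $1-\vartheta$ once $\delta$ is small. In the first two terms, apply all Wiener factors on $\FL^1_0$ before the single remaining angular block, as in Lemma~\ref{lem:structured-wiener}. With this change your argument proves \eqref{eq:global-remainder}.
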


\begin{proof}
  \emph{Local terms.}
  Multiplication by a fixed $\theta\in C_c^\infty(\R^6)$ is bounded on $\FL_{-\varepsilon}^1$, uniformly for $0<\varepsilon\leq1/2$.  Indeed, the unitary product formula and \eqref{eq:weighted-convolution}, applied with $k=(2\pi)^{-3}\widehat\theta$, give
  \[ \|\theta f\|_{\FL_{-\varepsilon}^1} \leq \frac{2^{1/4}}{(2\pi)^3} \|\ip{\cdot}^{1/2}\widehat\theta\|_{L^1(\R^6)} \|f\|_{\FL_{-\varepsilon}^1}. \]
  Equations \eqref{eq:q7} and \eqref{eq:local-zero-order} give the exact coefficient classification
  \[
  \begin{array}{c|c}
    \text{term} & \text{coefficient type}\\ \hline
    \nabla F\cdot\nabla u & \text{one angular block}\\
    \nabla P\cdot\nabla u & \text{Wiener}\\
    S_{\rm loc}u & \text{constant, Wiener, or angular--Wiener}
  \end{array}
  \]
  (finite sums are understood). Thus every local term contains at most one angular block. If $b$ is an angular block and $c$ is a Wiener coefficient, Lemma~\ref{lem:structured-wiener} and \eqref{eq:m-operator-expanded}, applied in this order, give
  \[ \|cf\|_{\FL_{-\varepsilon}^1} \leq C_c\|f\|_{\B^0}, \qquad \|bcf\|_{\FL_{-\varepsilon}^1} \leq\frac{C_{b,c}}{\varepsilon}\|f\|_{\B^0}. \]
  Finally,
  \begin{equation*} [-\Delta,\vartheta]u =-(\Delta\vartheta)u-2\nabla\vartheta\cdot\nabla u \end{equation*}
  has smooth compactly supported coefficients.  Since \(\|\partial_j u\|_{\B^0}\leq\|u\|_{\B^{3/2}}\), the preceding bounds give
  \[ \|2\vartheta\nabla(F+P)\cdot\nabla u+\vartheta S_{\rm loc}u +[-\Delta,\vartheta]u\|_{\FL_{-\varepsilon}^1} \leq\frac{C_{\rm loc}}{\varepsilon}\|u\|_{\B^{3/2}}. \]
  Applying \eqref{eq:resolvent-lift-expanded} with $p=1$ and $2\beta=\varepsilon$ proves \eqref{eq:local-remainder}.

  \emph{Exterior term.}
  Set $u_{\rm ext}=(1-\vartheta)u$.  Multiplying the global equation \eqref{eq:phi3-equation} by $1-\vartheta$ and using $(-\Delta+1)((1-\vartheta)u)=(1-\vartheta)(-\Delta+1)u +[-\Delta,1-\vartheta]u$ gives the exact identity
  \begin{align}
    (-\Delta+1)u_{\rm ext}
     &=(1-\vartheta)\Bigl\{2\nabla(F+P)\cdot\nabla u \notag\\
     &\qquad+\bigl[\Delta(F-F_2)+\Delta P
       +|\nabla(F+P)|^2+E+1\bigr]u\Bigr\}
       +[-\Delta,1-\vartheta]u.
       \label{eq:exterior-equation-expanded}
  \end{align}

  We first make the collision geometry precise.  The three collision sets are
  \[ \Sigma_x=\{x=0\},\qquad \Sigma_y=\{y=0\},\qquad \Sigma_{xy}=\{x=y\}. \]
  Their pairwise intersections equal $\{(0,0)\}$.  Choose $c_0>0$ such that $1-\vartheta=0$ on $\{|x|^2+|y|^2<c_0^2\}$.  Let $L_1X$ and $L_2X$ be two distinct collision coordinates chosen from $x$, $y$, and $x-y$, where $X=(x,y)$, and set $\ell_j=|L_jX|$.  Each map $X\mapsto(L_1X,L_2X)$ is invertible.  Since there are only three choices of pairs, a single $\kappa>0$ satisfies
  \[ \ell_1^2+\ell_2^2\geq\kappa(|x|^2+|y|^2). \]
  Hence, on $\supp(1-\vartheta)$,
  \[ \ell_1^2+\ell_2^2\geq \kappa c_0^2. \]
  Fix two components $b_\alpha$ and $b_\beta$ of the cut-off angular field and consider the product $b_\alpha(L_1X)b_\beta(L_2X)$.  Set
  \[ K_{12}^{\rm ext} :=\supp\bigl((1-\vartheta)b_\alpha(L_1X)b_\beta(L_2X)\bigr). \]
  Both $\ell_1$ and $\ell_2$ are bounded on the support of the product.
  Since $X\mapsto(L_1X,L_2X)$ is invertible, $K_{12}^{\rm ext}$ is compact.
  Put $r_{\rm sep}=c_0\sqrt{\kappa/2}$.  Choose $h\in C^\infty([0,\infty))$ with
  \[ 0\leq h\leq1,\qquad h(t)=0\iff t\leq r_{\rm sep}/2,\qquad h(t)=1\quad\text{for }t\geq r_{\rm sep}. \]
  Also choose $\omega_0\in C_c^\infty(\R^6)$ equal to one on $\{|x|^2+|y|^2\leq c_0^2/4\}$ and equal to zero on a neighborhood of $K_{12}^{\rm ext}$.  Define on all of $\R^6$
  \[ \pi_j(x,y)= \frac{h(\ell_j)}{h(\ell_1)+h(\ell_2)+\omega_0(x,y)}, \qquad j=1,2. \]
  The denominator is positive because
  \[
  \begin{aligned}
    h(\ell_1)+h(\ell_2)=0
    &\Longrightarrow \ell_1,\ell_2\leq r_{\rm sep}/2\\
    &\Longrightarrow
    \kappa(|x|^2+|y|^2)\leq\ell_1^2+\ell_2^2
    \leq\frac{r_{\rm sep}^2}{2}=\frac{\kappa c_0^2}{4}\\
    &\Longrightarrow \omega_0(x,y)=1.
  \end{aligned}
  \]
  Moreover, $h(|z|)$ is smooth at $z=0$ because it vanishes near the origin.  Thus $\pi_1$ and $\pi_2$ are globally smooth.
  On $K_{12}^{\rm ext}$,
  \[ \omega_0=0,\qquad \max\{\ell_1,\ell_2\} \geq\left(\frac{\ell_1^2+\ell_2^2}{2}\right)^{1/2} \geq r_{\rm sep},\qquad \pi_1+\pi_2=1. \]
  Moreover,
  \[ \pi_j(X)\neq0\quad\Longrightarrow\quad \ell_j>r_{\rm sep}/2, \]
  so the $j$th angular block is smooth on the $j$th partitioned term.

  Since $\pi_1+\pi_2=1$ on $K_{12}^{\rm ext}$ and the left-hand side below vanishes outside this set, inserting the partition gives the exact decomposition
  \[
  \begin{aligned}
    &(1-\vartheta)b_\alpha(L_1X)b_\beta(L_2X)\\
    &\qquad=(1-\vartheta)\pi_1b_\alpha(L_1X)b_\beta(L_2X)
      +(1-\vartheta)\pi_2b_\alpha(L_1X)b_\beta(L_2X).
  \end{aligned}
  \]
  Choose one function $\rho\in C_c^\infty(\R^3)$ such that $\rho=1$ on $\supp b$.  Thus $\rho b_\gamma=b_\gamma$ for every component $b_\gamma$ of $b$.  After the change of variables $(z,w)=(L_1X,L_2X)$, the two terms become
  \[ g_1(z,w)b_\beta(w),\qquad g_2(z,w)b_\alpha(z), \]
  where
  \[
  \begin{aligned}
    g_1(z,w)&=(1-\vartheta)\pi_1b_\alpha(z)\rho(w),\\
    g_2(z,w)&=(1-\vartheta)\pi_2\rho(z)b_\beta(w).
  \end{aligned}
  \]
  Here $1-\vartheta$ and $\pi_j$ are understood after composition with the inverse coordinate map.  Thus $g_1,g_2\in C_c^\infty(\R^6)$.  Indeed, $b_\alpha(z)\rho(w)$ and $\rho(z)b_\beta(w)$ give compact support, while $\pi_1\neq0$ implies $|z|>r_{\rm sep}/2$ and $\pi_2\neq0$ implies $|w|>r_{\rm sep}/2$.  Hence the angular factor absorbed into each $g_j$ is smooth on its support.

  For $g\in C_c^\infty(\R^6)$, multiplication occurs only in the three-dimensional $w$ variable.  Hence the unitary product formula carries the factor $(2\pi)^{-3/2}$ and gives
  \[ \widehat{g b}(\xi,\eta) =(2\pi)^{-3/2}\int_{\R^3}\widehat g(\xi,\eta-\nu) \widehat b(\nu)\dd\nu. \]
  Young's inequality in $\eta$, followed by integration in $\xi$, gives, for $1<\sigma\leq2$,
  \begin{align*}
    \|\widehat{g b}\|_{L^\sigma(\R^6)}
     & \leq(2\pi)^{-3/2}
    \left\|\|\widehat g(\xi,\cdot)\|_{L^1(\R^3)}
    \right\|_{L^\sigma(\R^3_\xi)}
    \|\widehat b\|_{L^\sigma(\R^3)}.
  \end{align*}
  Because $\widehat g$ is Schwartz, interpolation between $\sigma=1$ and $\sigma=2$ gives
  \[ \sup_{1\leq\sigma\leq2} \left\|\|\widehat g(\xi,\cdot)\|_{L^1(\R^3)} \right\|_{L^\sigma(\R^3_\xi)}<\infty. \]
  Taking $\sigma=r_\varepsilon$ and using \eqref{eq:b-r-exact-expanded} yields the explicit one-pole bound
  \[ \|\widehat{g b}\|_{L^{r_\varepsilon}(\R^6)} \leq\frac{C_g}{\varepsilon}, \qquad 0<\varepsilon\leq\frac12. \]
  In the diagonal terms of $|\nabla F|^2$, the square of $z/|z|$ is one; terms containing derivatives of the fixed radial cut-offs are smooth and compactly supported.  They are therefore noncritical.

  The coefficients in \eqref{eq:exterior-equation-expanded} now have the following precise classification.
  \[
  \begin{array}{c|c}
    \text{coefficient} & \text{Fourier structure}\\ \hline
    \nabla F & \text{one angular block}\\
    \nabla P,\ \Delta(F-F_2),\ |\nabla P|^2
      & \text{Wiener}\\
    (1-\vartheta)\Delta P
      & C_c^\infty(\R^6)\\
    2\nabla F\cdot\nabla P
      & \text{one angular block times a Wiener factor}\\
    |\nabla F|^2
      & \text{constant, smooth, or a partitioned one-angular term}.
  \end{array}
  \]
  Here the last row uses the preceding partition only for products associated with distinct collision coordinates; diagonal angular squares equal one away from the harmless cut-off derivatives.  Multiplication by $1-\vartheta$ preserves this classification because
  \[ (1-\vartheta)b=b-\vartheta b, \qquad \widehat{(1-\vartheta)c} =\widehat c-(2\pi)^{-3}\widehat\vartheta*\widehat c, \quad \widehat c\in L^1. \]
  For a three-dimensional Wiener block $d_J(LX)$, whose full Fourier transform contains spectator delta distributions, the corresponding operator identity is
  \[ (1-\vartheta)d_J(LX)f=d_J(LX)f-\vartheta\,[d_J(LX)f]. \]
  Lemma~\ref{lem:structured-wiener}, with the three-dimensional active Fourier transform $\widehat {d_J}\in L^1(\R^3)$, and the smooth-cut-off estimate \eqref{eq:weighted-convolution} give
  \[
  \begin{aligned}
    \|d_J(LX)f\|_{\FL_{-\varepsilon}^1}
      &\leq C_{d_J}\|f\|_{\B^0},\\
    \|\vartheta[d_J(LX)f]\|_{\FL_{-\varepsilon}^1}
      &\leq C_\vartheta\|d_J(LX)f\|_{\FL_{-\varepsilon}^1}
      \leq C_\vartheta C_{d_J}\|f\|_{\B^0}.
  \end{aligned}
  \]
  Hence
  \[ \|(1-\vartheta)d_J(LX)f\|_{\FL_{-\varepsilon}^1} \leq(1+C_\vartheta)C_{d_J}\|f\|_{\B^0}, \]
  without treating the spectator deltas as ordinary $L^1(\R^6)$ functions.

  The remaining commutator is
  \[ [-\Delta,1-\vartheta]u=(\Delta\vartheta)u +2\nabla\vartheta\cdot\nabla u \]
  and has smooth compactly supported coefficients. Lemmas~\ref{lem:structured-wiener} and \ref{lem:multiplier}, together with $\|\nabla u\|_{\B^0}\leq\|u\|_{\B^{3/2}}$, therefore give the source estimate
  \[ \|(-\Delta+1)u_{\rm ext}\|_{\FL_{-\varepsilon}^1} \leq\frac{C_{\rm ext}}{\varepsilon}\|u\|_{\B^{3/2}}. \]
  Applying \eqref{eq:resolvent-lift-expanded} with $p=1$ and $2\beta=\varepsilon$ gives
  \[ \|u_{\rm ext}\|_{\B^{2-\varepsilon}} \leq\frac{C_{\rm ext}}{\varepsilon}\|u\|_{\B^{3/2}}. \]
  This is \eqref{eq:global-remainder}.
\end{proof}

\begin{proof}[Proof of Theorem~\ref{thm:main}(iv)]
  We first write $u$ as the sum of the frozen critical source, its variable-coefficient error, the remaining local terms, and the exterior part.  The reverse triangle inequality then reduces \eqref{eq:sharp-main} to four explicit error bounds.  Proposition~\ref{prop:source-residue} controls the first, Lemma~\ref{lem:frozen-coefficient} the second, and Lemma~\ref{lem:one-pole-remainder} the last two.

  Applying Theorem~\ref{thm:main}(i) with $s=3/2$ gives $u\in\B^{3/2}(\R^6)$ independently of $\varepsilon$.  Multiplying \eqref{eq:local-conjugated} by $\vartheta$ and using
  \[ \vartheta\Gamma_0u =u(0,0)\vartheta\Gamma_0 +\vartheta\Gamma_0(u-u(0,0)) \]
  and then adding $(1-\vartheta)u$ gives the exact global decomposition
  \begin{align*}
    u={}&-\frac Z4u(0,0)\res(\vartheta\Gamma_0)
       -\frac Z4\res\bigl(\vartheta\Gamma_0
         (u-u(0,0))\bigr)                                      \\
      &+\res\bigl(2\vartheta\nabla(F+P)\cdot\nabla u
         +\vartheta S_{\rm loc}u+[-\Delta,\vartheta]u\bigr)
       +(1-\vartheta)u .
  \end{align*}
  The first term is the only one with two independent critical frequency scales.  Since $|\|f+g\|-\|f\||\leq\|g\|$ in every normed space, and since the norm of the first term is
  \[ \frac{Z|u(0,0)|}{4} \|\res(\vartheta\Gamma_0)\|_{\B^{2-\varepsilon}}, \]
  the preceding identity yields the following complete error decomposition.
  \begin{align*}
    &\left|
      \|u\|_{\B^{2-\varepsilon}}
      -\frac{32\pi Z|u(0,0)|}{\varepsilon^2}
    \right|                                                     \\
    &\quad\leq
      \frac{Z|u(0,0)|}{4}
      \left|
        \|\res(\vartheta\Gamma_0)\|_{\B^{2-\varepsilon}}
        -\frac{128\pi}{\varepsilon^2}
      \right|                                                   \\
    &\qquad+
      \frac Z4
      \|\res(\vartheta\Gamma_0(u-u(0,0)))
      \|_{\B^{2-\varepsilon}}                                  \\
    &\qquad+
      \|\res(2\vartheta\nabla(F+P)\cdot\nabla u
        +\vartheta S_{\rm loc}u+[-\Delta,\vartheta]u)
      \|_{\B^{2-\varepsilon}}                                  \\
    &\qquad+
      \|(1-\vartheta)u\|_{\B^{2-\varepsilon}} .
  \end{align*}

  The four terms on the right are controlled, in order, by \eqref{eq:source-residue}, multiplied by $Z|u(0,0)|/4$; \eqref{eq:frozen-coefficient}, with $f=u$ and multiplied by $Z/4$; \eqref{eq:local-remainder}; and \eqref{eq:global-remainder}.
  Substitution into the error decomposition gives
  \begin{align*}
    &\left|
      \|u\|_{\B^{2-\varepsilon}}
      -\frac{32\pi Z|u(0,0)|}{\varepsilon^2}
    \right|                                                     \\
    &\quad\leq\frac1\varepsilon
      \left[
        \frac{Z|u(0,0)|C_\vartheta}{4}
        +\left(
          \frac{Z2^{1/4}C_{\rm der}}{4(2\pi)^3}
          +C_{\rm loc}+C_{\rm ext}
        \right)\|u\|_{\B^{3/2}}
      \right].
  \end{align*}
  The bracket is finite and independent of $\varepsilon$; absorbing it into $C_{\rm sharp}$ proves \eqref{eq:sharp-main}.

  For $Z>1$, Zhislin's binding theorem provides a scalar two-electron ground state below the essential spectrum; see \cite{Zhislin1960} and \cite[Corollary~11.10]{Simon2019KatoPart2}. The Harnack principle gives a continuous strictly positive representative \cite[Theorems~C.1.1 and C.1.3]{Simon1982}. Both cut-off Jastrow factors vanish at $(0,0)$, so $u(0,0)=\psi(0,0)>0$. Hence the double pole is attained by an eigenfunction of the unperturbed Coulomb Hamiltonian, and the exponent two in \eqref{eq:upper} is sharp.
\end{proof}

\section{Concluding remarks}

This paper determines the Barron regularity gained by removing the explicit two- and three-particle Coulomb singularities from an electronic eigenfunction. For both successive quotients, we prove membership in $\B^s$ for every $s<2$ and obtain the quantitative estimate $\|u\|_{\B^{2-\varepsilon}}\leq M\varepsilon^{-2}\|u\|_{\B^1}$. We also prove that neither conclusion can be improved in its stated sense. No universal state-independent multiplicative factor in the class specified in Theorem~\ref{thm:main}(iii) can place all Coulombic quotients in $\B^2$, and an unperturbed two-electron eigenfunction satisfies the exact leading asymptotic
\[
  \|\phi_3\|_{\B^{2-\varepsilon}(\R^6)}
  =32\pi Z|\phi_3(0,0)|\varepsilon^{-2}+\mathcal{O}(\varepsilon^{-1})
\]
whenever $\phi_3(0,0)\neq0$.

The analysis shows that the endpoint growth is governed by the number of independent collision-frequency scales rather than by the ambient dimension $3N$. The low-rank Fourier--Lebesgue formulation isolates these three- and six-dimensional structures, while the residue computation proves that the quadratic divergence is an intrinsic feature of the three-particle coalescence and not a loss introduced by the multiplier upper bound. This connection between local Coulomb geometry and global weighted Fourier integrability also provides a regularity input for studying high-dimensional approximation of the factored wave function. The results proved here require no permutation symmetry.

Two directions arise naturally from this work. For fermionic wave functions, mixed regularity is particularly important because antisymmetry yields additional regularity across electron variables and can improve high-dimensional approximation rates; see~\cite{Yserentant2010,Meng2023}. Corresponding results on mixed spectral Barron regularity for antisymmetric Coulombic wave functions will be reported in a forthcoming work. The low-regularity $L^2$- and $L^\infty$-approximation estimates for deep neural networks in~\cite{LiaoMingYu2025}, together with the generalization framework for Schr\"odinger eigenvalue solvers in \cite{GuoMingYu2026}, provide a basis for deriving quantitative approximation and generalization error bounds for neural-network computation of Coulombic wave functions.

\bibliographystyle{amsplain}
\bibliography{references}

\end{document}